\patchcmd{\subsection}{-.5em}{.5em}{}{}
\newcommand{\C}{\mathbb{C}}
\newcommand{\HH}{\mathbb{H}}
\newcommand{\R}{\mathbb{R}}
\newcommand{\Z}{\mathbb{Z}}
\newcommand{\cO}{\mathcal{O}}
\newcommand{\fa}{\mathfrak{a}}
\DeclareSymbolFont{cyrletters}{OT2}{wncyr}{m}{n}
\DeclareMathSymbol{\sha}{\mathalpha}{cyrletters}{"58}
\newcommand{\eps}{\varepsilon}
\newcommand{\ds}{\displaystyle}
\newlength{\strutheight}
\newcommand{\half}{\frac{1}{2}}
\newcommand{\thalf}{\tfrac{1}{2}}
\newcommand{\tth}{\text{th}}
\newtheorem{theorem}{Theorem}[section]
\newtheorem{lemma}[theorem]{Lemma}
\newtheorem{corollary}[theorem]{Corollary}
\newtheorem{proposition}[theorem]{Proposition}
\newtheorem{assumption}[theorem]{Assumption}
\theoremstyle{definition}
\newtheorem{definition}[theorem]{Definition}
\author{Alex Cowan}
\address{Department of Mathematics, Harvard University, Cambridge, MA 02138 USA}
\email{cowan@math.harvard.edu}
\thanks{The author was supported by the Simons Foundation Collaboration Grant 550031.}
\title{A twisted additive divisor problem}
\date{\today}
\begin{document}
%\mainsectionstyle
%\subsectionstyle
\maketitle
\begin{abstract}
  \noindent
  We give asymptotics for shifted convolutions of the form
  $$\sum_{n < X} \frac{\sigma_{2u}(n,\chi)\sigma_{2v}(n+k,\psi)}{n^{u+v}}$$
  for nonzero complex numbers $u,v$ and nontrivial Dirichlet characters $\chi,\psi$. We use the technique of \textit{automorphic regularization} to find the spectral decomposition of a combination of Eisenstein series which is not obviously square-integrable. The error term we obtain is in some cases smaller than what the method we use typically yields.
\end{abstract}
\tableofcontents
\section{Introduction}
\noindent
For any Dirichlet character $\chi$ let $\tau(\chi)$ denote the associated Gauss sum, and for any $s \in \C$, $n \in \Z$, let $\sigma_s(n,\chi)$ denote the sum of divisors function $\sigma_s(0,\chi) \coloneqq 0$ and
\begin{align*}
  \sigma_s(n,\chi) \coloneqq \sum_{\substack{d\mid n\\d > 0}}\chi(d)d^s.
\end{align*}
%if $n \neq 0$, and $\sigma_s(0,\chi) \coloneqq 0$.
\begin{theorem}\label{maintheorem}
  For any
  \begin{itemize}
  \item positive integer $k$,
  \item rational prime $N$,
  \item even nontrivial Dirichlet characters $\chi$ and $\psi$ mod $N$ such that $\chi\psi$ is nontrivial,
  \item nonzero complex numbers $u$ and $v$ satisfying $|\Re(u)| + |\Re(v)| < \thalf$,
  \item $0 < \eps < \thalf - |\Re(u)| - |\Re(v)|$,
  \item positive integer $X$,
  \end{itemize}
  \begin{align*}
    \sum_{n=1}^X \frac{\sigma_{2u}(n,\chi)\sigma_{2v}(n-k,\psi)}{n^{u+v}}%\\
    = \frac{L(1-2u,\chi)L(1-2v,\psi)}{L(2-2u-2v,\chi\psi)}\sigma_{-1+2u+2v}(k,\chi\psi)\frac{X^{1-u-v}}{1-u-v}&\\
    + \,\frac{\tau(\overline{\chi\psi})}{\tau(\overline{\chi})\tau(\overline{\psi})}\frac{L(1+2u,\overline{\chi})L(1+2v,\overline{\psi})}{L(2+2u+2v,\overline{\chi\psi})}\frac{\sigma_{1+2u+2v}(k,\chi\psi)}{k^{1+2u+2v}}\frac{X^{1+u+v}}{1+u+v}&\\
    + \,\cO\!\left(X^{1 + |\Re(u)| + |\Re(v)| - \frac{1 + 2|\Re(u)| + 2|\Re(v)|}{3 + |\Re(u+v)| + |\Re(u-v)|} + \eps}\right)&
  \end{align*}
  as $X \to \infty$ and all other quantities fixed.
\end{theorem}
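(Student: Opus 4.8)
The plan is to realize the twisted divisor sums as Fourier coefficients of Eisenstein series on $\Gamma_0(N)$ and to study the shifted convolution through the spectral theory of automorphic forms on $\Gamma_0(N)\backslash\HH$. Concretely, I would introduce the weight-zero Eisenstein series $E_\chi(z,s)$ attached to the cusps of $\Gamma_0(N)$ and twisted by $\chi$, whose $n$-th Fourier coefficient is, up to explicit gamma and Bessel factors and a power of $|n|$, the twisted divisor function $\sigma_{2u}(|n|,\chi)$ once the spectral parameter $s$ is specialized appropriately. The product $E_\chi(z,\alpha)\,\overline{E_\psi(z,\beta)}$ then has a Fourier expansion whose $k$-th coefficient is, after integrating in $y$ against a suitable kernel, a smoothed version of the target sum. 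Equivalently, I would build a Poincaré series $P_k(z,s)$ so that unfolding $\langle P_k(\cdot,s),\, E_\chi(\cdot,\alpha)\,\overline{E_\psi(\cdot,\beta)}\rangle$ produces $\sum_n \sigma_{2u}(n,\chi)\,\sigma_{2v}(n-k,\psi)$ weighted by a Mellin-analytic factor in $s$, from which the sharp cutoff sum up to $X$ is recovered by a final Mellin inversion $\frac{1}{2\pi i}\int D(w)\frac{X^w}{w}\,dw$; the two main terms, carrying $X^{1-u-v}$ and $X^{1+u+v}$, will arise as residues at the two poles of the resulting Dirichlet series $D(w)$.

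The crucial and nonroutine step is that $E_\chi\,\overline{E_\psi}$ grows at the cusps and is not square-integrable, so the spectral expansion cannot be applied to it directly. Here I would use automorphic regularization: subtract from the product an explicit finite combination of Eisenstein series matching its constant-term growth at every cusp, producing a function $\Phi \in L^2(\Gamma_0(N)\backslash\HH)$ to which the spectral decomposition
\[
  \Phi = \sum_j \langle \Phi, u_j\rangle\, u_j + (\text{continuous spectrum}) + (\text{residual and constant})
\]
legitimately applies. The subtracted Eisenstein pieces are handled separately by unfolding and supply the two main terms; I expect the Gauss-sum ratio $\tau(\overline{\chi\psi})/(\tau(\overline{\chi})\tau(\overline{\psi}))$ and the $L$-value quotients in the statement to emerge precisely from the constant terms and scattering entries of these Eisenstein series at the cusp $0$, which explains why the second main term carries the ``dual'' shape with barred characters and the reflected exponent $1+u+v$.

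With $\Phi$ in hand I would evaluate the Poincaré series against each spectral component. Unfolding reduces the cuspidal term to a sum $\sum_j \rho_j(k)\,\langle \Phi, u_j\rangle$ times a Bessel-type transform of the weight function, where $\rho_j(k)$ is the $k$-th Fourier coefficient of the Maass form $u_j$, and the triple product $\langle E_\chi\,\overline{E_\psi}, u_j\rangle$ factors---via unfolding one Eisenstein factor in Rankin--Selberg fashion---into a quotient of $L$-functions attached to $u_j$; the continuous spectrum contributes the analogous integral over the Eisenstein parameter. Bounding these is where the error term is produced: I would insert convexity (or available subconvexity) bounds for the spectral $L$-values, the spectral large-sieve normalization for the $\rho_j(k)$, and the decay of the Bessel transform in the spectral parameter, and then optimize the height $T$ at which the spectral sum is truncated against the length $X$. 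The exponent $1+|\Re(u)|+|\Re(v)|-\frac{1+2|\Re(u)|+2|\Re(v)|}{3+|\Re(u+v)|+|\Re(u-v)|}+\eps$ should fall out of this balance, with the denominator $3+|\Re(u+v)|+|\Re(u-v)|$ recording the exponents that enter the optimization of the Mellin cutoff in the final inversion.

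The main obstacle I anticipate is twofold. First, one must carry out the regularization precisely enough that the subtracted Eisenstein series reproduce the two main terms with their exact arithmetic factors rather than merely up to constants; this forces careful bookkeeping of constant terms, scattering matrix entries, and residues, and is where most of the technical work will lie. Second, establishing that the error is genuinely smaller than the method's typical output---the point emphasized in the abstract---presumably requires exploiting extra cancellation in the Mellin--Bessel transform of the chosen weight function, so that its decay in the spectral parameter beats a generic cutoff and the optimization saves more than the naive bound; verifying this decay and its interaction with the subconvex $L$-value estimates is the delicate quantitative heart of the argument.
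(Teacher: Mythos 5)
Your proposal follows essentially the same route as the paper: a product of Eisenstein series on $\Gamma_0(N)$ whose $k$-th Fourier coefficient encodes the shifted convolution, automorphic regularization by subtracting an explicit combination of Eisenstein series (whose contribution indeed supplies both main terms with their Gauss-sum and $L$-value factors), spectral decomposition with Rankin--Selberg unfolding for the triple products, convexity plus a spectral large sieve for the cuspidal error, and a final Perron/Mellin inversion optimized in the truncation height $T$. Two minor caveats: the paper deliberately takes the Mellin transform of the Fourier coefficient directly rather than pairing with a Poincar\'{e} series --- these are not equivalent in practice, since the Poincar\'{e} route forces one to handle the Mellin transform of $K_u(ay)K_v(by)e^{-cy}$, which the paper emphasizes is far less tractable --- and the smaller-than-typical error term comes from the location of the poles of the cuspidal part relative to the abscissa of convergence of the $L$-series, not from extra cancellation in the Bessel transform of a weight function.
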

\noindent
The quantity $$\frac{\sigma_{2u}(n,\chi)\sigma_{2v}(n-k,\psi)}{n^{u+v}}$$ appearing on the left hand side of \cref{maintheorem} is natural to consider in light of the functional equation
\begin{align*}
  \frac{\sigma_{2u}(n,\chi)}{n^u} = \chi(n)\frac{\sigma_{-2u}(n,\bar\chi)}{n^{-u}}
\end{align*}
for $(n,N) = 1$. In \cref{maintheorem}, one can replace the factor of $n^v$ in the denominator with $(n-k)^v$ without having to modify the error term as stated. If $(k,N) = 1$, then
\begin{align*}
  \frac{\sigma_{1+2u+2v}(k,\chi\psi)}{k^{1+2u+2v}} = \chi\psi(k) \,\sigma_{-1-2u-2v}(k,\overline{\chi\psi}),
\end{align*}
emphasizing the symmetry in the right hand side of \cref{maintheorem}.\\
\\
\cref{maintheorem} is an instance of the \textit{additive divisor problem}. The classical additive divisor problem is to estimate the sum
\begin{align}\label{additive_divisor_problem_equation}
  \sum_{n=1}^X \sigma_0(n) \sigma_0(n+k),
\end{align}
where $k$ is a positive integer, $\sigma_s(n) \coloneqq \sigma_s(n,\mathbbm{1})$, and $\mathbbm{1}$ denotes the Dirichlet character mod $1$. Additive divisor problems are instances of a more general type of problem, estimating \textit{shifted convolutions}, which we take to mean expressions of the form \eqref{additive_divisor_problem_equation} but with $\sigma_0$ replaced by any function of arithmetic interest. One very often considers \eqref{additive_divisor_problem_equation} with $\sigma_0$ replaced by the Fourier coefficients of some automorphic form, for example \cite{TV, motohashi, jutila, good, HH, HKLDW, HLN, NPR}.\\
\\
Shifted convolutions arise in a variety of contexts, some of which emphasize arithmetic statistics \cite{BD, HKLDW, BM}, and some of which emphasize analytic elements \cite{mueller, DFI, michel:subconvexity, young:4thmoment, young:QUE}, especially ``subconvexity bounds'' regarding the growth of $L$-functions on vertical lines. See \cite{michel} for an introductory survey.\\
\\
The most common sorts of estimates for quantities like \eqref{additive_divisor_problem_equation} consist of both a \textit{main term} and an \textit{error term}, which are, respectively, an asymptotic value of the sum \eqref{additive_divisor_problem_equation}, and a bound on the difference between the sum itself and this asymptotic value. Obtaining strong error terms is an interesting and difficult problem. For the classical additive divisor problem, Takhtadzhyan and Vinogradov \cite{TV} give an estimate with an error term of size $\text{main term}^{\frac{2}{3}+\eps}$, and this is typical when studying shifted convolutions via ``spectral methods'' like \cite{TV} and here. However, one often expects (and e.g.\ it is expected for the classical additive divisor problem \cite{TV, NPR}) that the ``true'' error term in these cases is of size $\text{main term}^{\frac{1}{2}+\eps}$, in the sense that the statements such as the main result of \cite{TV} are still true if one replaces $\tfrac{2}{3}$ by $\thalf$, but not by any smaller number. This barrier at $\tfrac{2}{3}$ can also be seen in \cite{jutila} and many other works.\\
\\
The main term of \cref{maintheorem} is of size $X^{1 + |\Re(u+v)|}$, provided that the coefficients are nonzero. The ratio of the exponent of the error term, minus $\eps$, to the exponent of the main term is
\begin{align}\label{error_term_ratio}
  &\frac{1 + |\Re(u)| + |\Re(v)| - \frac{1 + 2|\Re(u)| + 2|\Re(v)|}{3 + |\Re(u+v)| + |\Re(u-v)|}}{\ds{1 + |\Re(u+v)|}}.
\end{align}
%For any $\theta \in (\tfrac{13}{21},\tfrac{13}{14})$, there exist $u,v \in \C$ satisfying the assumptions of \cref{maintheorem} such that this ratio is equal to $\theta$.
For $\Re(u) = \Re(v) = \tfrac{1}{4}$, the ratio \eqref{error_term_ratio} is $\tfrac{13}{21}$, and for $\Re(u) = -\Re(v) = \tfrac{1}{4}$ the ratio \eqref{error_term_ratio} is $\tfrac{13}{14}$. All $u,v \in \C$ which satisfy the assumptions of \cref{maintheorem} yield values of \eqref{error_term_ratio} strictly between $\tfrac{13}{21}$ and $\tfrac{13}{14}$. The difference between the exponent of the main term and the exponent of the error term has similar behaviour, and ranges between $\tfrac{1}{14}$ and $\tfrac{8}{14}$.\\
\\
\cref{maintheorem} is, for some choices of $u, v$, a shifted convolution problem that breaks this ``$\frac{2}{3}$ barrier''. The techniques we use to bound this error term are largely standard for the field, and the relatively small size of the error term we obtain is a consequence of the locations of the poles of the cuspidal part of a spectral decomposition compared to the abscissa of convergence of the related $L$-series.\\
\\
The error term of \cref{maintheorem} omits the dependence on variables other than $X$. In the context of studying the related $L$-functions, this omission essentially corresponds to giving a bound of growth on vertical lines in the ``$t$-aspect''. We expect it'd be possible to give a dependence on the other variables appearing in \cref{maintheorem} using the same method as the one here and more technical analysis. Moreover, we expect that, if needed, one could write the error term as a sum of residues of explicit meromorphic functions plus an error of size $\cO(X^{-B})$ for any $B > 0$.\\
\\
The method by which we prove \cref{maintheorem} has been widely used to produce many similar results, particularly \cite{goldfeld}. See \cite{HH, HL20, HLN, nelson, MV} for fairly general treatments of shifted convolutions. Previous results have made various assumptions which we omit here, two of which we now highlight.\\
\\
First, previous results almost always only analyze the $L$-functions which appear on the line $\Re(s) = \thalf$. This would correspond to \cref{maintheorem} with $\Re(u) = \Re(v) = 0$; c.f.\ \cite{DFI} and \cite{HLN}.% Often analysis on the line $\Re(s) = \thalf$ is sufficient for applications thanks to the Phragm\'{e}n--Lindel\"{o}f principle, but here there are additional nuances. In particular, the ``spectral large sieve'' is often essentially the Lindel\"{o}f hypothesis on average, but the use of the spectral large sieve in \cref{growth_section} gives a result that's weaker than average Lindel\"{o}f.
\\
\\
Second, the overall strategy for estimating shifted convolutions is to use a spectral decomposition (e.g.\ \cref{spectral_decomposition}), and this requires certain automorphic forms appearing in the construction to be square-integrable. Most previous results satisfy this square-integrability condition by considering shifted convolutions involving at least one cuspidal automorphic form, for example \cite{HH, harcos, BH}. Other approaches are possible as well. For example, Goldfeld in \cite{goldfeld} considers essentially exactly the same shifted convolution we do, but considers completed Eisenstein series whose constant terms have factors of Dirichlet $L$-functions, and then ensures square-integrability by restricting to Eisenstein series with eigenvalues that cause these Dirichlet $L$-functions to vanish.\\
\\
Our approach to guaranteeing square-integrability is to subtract from our automorphic form of interest a specially chosen linear combination of Eisenstein series, a process we call \textit{automorphic regularization}. This is done more or less ad-hoc by \cite{TV, DFI, templier, PR, BD} and other places, and in those cases assumptions were made which allowed one to verify directly that the resulting difference would be square-integrable. In our setting, with level larger than $1$ and nontrivial nebentypus, it is not clear to us how to carry out similar direct inspection.\\
\\
Automorphic regularization was done used more systematically and more generally in \cite{MV} and \cite{wu}, though these can't be applied directly to our work here. Our approach is to take inspiration from these more systematic approaches and show, following \cite{HKLDW},  that we can guarantee a certain difference of automorphic forms is square-integrable, even though it is not clear to us how one would verify this directly. This process will also yield expressions for certain terms which appear in the spectral decomposition, allowing us to carry out the rest of the overall method. One can view this sort of automorphic regularization as a generalization of the Maass-Selberg relations, and to our knowledge was first treated in depth by Zagier in \cite{zagier}.% See also \cite{goldfeld_arthur}.

\section{Background}
\subsection{Bessel functions}
\noindent
The \textit{$K$-Bessel function} $K_u(y)$, often called the modified Bessel function of the second kind, is defined as \cite[8.432.7 with $z=1$]{GR}
\begin{align}\label{bessel_def}
  K_u(y) \coloneqq \half\int_0^\infty \exp\!\left(-y\frac{t+t^{-1}}{2}\right)t^u\frac{dt}{t}.
\end{align}
Via the transformation $t\mapsto\frac{1}{t}$, one sees that
$$K_u(y) = K_{-u}(y).$$
We will use the following Mellin transforms.
\begin{lemma}[{\cite[6.561.16]{GR}}]\label{bessel_mellin}
  If $\Re(a) > 0$ and $\Re(s) > |\Re(u)|$, then
  \begin{align*}
    &\int_0^\infty K_u(ay)y^s\frac{dy}{y} = 2^{s-2}a^{-s}\Gamma\!\left(\frac{s+u}{2}\right)\Gamma\!\left(\frac{s-u}{2}\right).
  \end{align*}
\end{lemma}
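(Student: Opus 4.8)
The plan is to work directly from the integral definition \eqref{bessel_def} of $K_u$ and reduce the claim to standard Gamma and Beta integrals. Substituting \eqref{bessel_def} and exchanging the order of integration (justified below) gives
\begin{align*}
  \int_0^\infty K_u(ay)y^s\frac{dy}{y}
  = \half\int_0^\infty t^u\!\left(\int_0^\infty \exp\!\left(-ay\tfrac{t+t^{-1}}{2}\right)y^s\frac{dy}{y}\right)\frac{dt}{t}.
\end{align*}
For each fixed $t > 0$ the constant $c \coloneqq a\tfrac{t+t^{-1}}{2}$ has positive real part since $\Re(a) > 0$, so the inner integral is the Gamma integral $\Gamma(s)c^{-s} = \Gamma(s)a^{-s}2^s(t+t^{-1})^{-s}$, which is valid because $\Re(s) > 0$ (a consequence of $\Re(s) > |\Re(u)|$).

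Next I would evaluate the remaining integral in $t$. Writing $t+t^{-1} = (1+t^2)/t$ and substituting $w = t^2$ turns it into a Beta integral:
\begin{align*}
  \int_0^\infty t^u(t+t^{-1})^{-s}\frac{dt}{t}
  = \half\int_0^\infty w^{\frac{s+u}{2}-1}(1+w)^{-s}\,dw
  = \half\,\frac{\Gamma\!\left(\frac{s+u}{2}\right)\Gamma\!\left(\frac{s-u}{2}\right)}{\Gamma(s)}.
\end{align*}
Multiplying this by the prefactor $\half\,\Gamma(s)a^{-s}2^s$ from the first display, the two copies of $\Gamma(s)$ cancel and the constants combine as $2^s\cdot\half\cdot\half = 2^{s-2}$, producing exactly $2^{s-2}a^{-s}\Gamma\!\left(\frac{s+u}{2}\right)\Gamma\!\left(\frac{s-u}{2}\right)$.

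The only step genuinely requiring care is the interchange of the two integrals, which I would justify by Tonelli's theorem: replacing $a$, $s$, $u$ by their real parts dominates the integrand in absolute value, and the resulting positive double integral is evaluated by exactly the computation above, hence is finite under the stated hypotheses. Indeed, $\Re(s) > |\Re(u)|$ is equivalent to the pair $\Re\!\left(\tfrac{s+u}{2}\right) > 0$ and $\Re\!\left(\tfrac{s-u}{2}\right) > 0$, which is precisely what the Beta integral needs to converge at both endpoints, while $\Re(a) > 0$ supplies the exponential decay in $y$. Thus the hypotheses are exactly calibrated to the convergence of each integral, and no obstacle remains beyond this bookkeeping.
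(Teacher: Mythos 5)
Your proof is correct, but note that the paper does not prove this lemma at all: it is quoted verbatim from \cite[6.561.16]{GR}, a table of integrals. So your argument is not an alternative route to the paper's proof but a self-contained derivation of the cited table entry, and it is the natural one. The computation checks out: substituting the definition \eqref{bessel_def} and interchanging the integrals, the inner $y$-integral is $\Gamma(s)c^{-s}$ with $c = a\tfrac{t+t^{-1}}{2}$, the remaining $t$-integral becomes, after writing $t+t^{-1} = (1+t^2)/t$ and setting $w = t^2$, the Beta integral
\begin{align*}
  \half\int_0^\infty w^{\frac{s+u}{2}-1}(1+w)^{-s}\,dw = \half\,\frac{\Gamma\!\left(\frac{s+u}{2}\right)\Gamma\!\left(\frac{s-u}{2}\right)}{\Gamma(s)},
\end{align*}
and the constants combine as $\half\cdot 2^s\cdot\half = 2^{s-2}$, giving exactly the stated right-hand side. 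Your convergence bookkeeping is also the right one: the modulus of the double integrand is the same integrand with $a$, $s$, $u$ replaced by their real parts, that positive integral is finite precisely when $\Re(a)>0$ and $\Re(s)>|\Re(u)|$ (which is equivalent to both Beta exponents having positive real part), so Tonelli licenses the interchange. The only step you gloss over slightly is that $\int_0^\infty e^{-cy}y^{s-1}\,dy = \Gamma(s)c^{-s}$ for \emph{complex} $c$ with $\Re(c)>0$, with the principal branch of $c^{-s}$, requires a one-line contour rotation or analytic continuation beyond the real Gamma integral; the same branch convention then fixes the meaning of $a^{-s}$ in the statement. This is standard and does not affect correctness.
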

%\begin{lemma}\label{bessel_exponential_mellin}(\cite[6.621.3]{GR}) If $\Re(a+b) > 0$ and $\Re(s) > |\Re(u)|$, then
%\begin{align*}
%  &\int_0^\infty K_u(ay)e^{-by}y^s \frac{dy}{y} = \sqrt{\pi}\frac{(2a)^u}{(a+b)^{s+u}}\frac{\Gamma(s+u)\Gamma(s-u)}{\Gamma\!\left(s+\thalf\right)}F\!\left(s+u,u+\thalf;s+\thalf\,;\frac{b-a}{b+a}\right).
%\end{align*}
%\end{lemma}
\begin{lemma}[{\cite[6.576.4]{GR}}]\label{bessel_product_mellin}
  If $\Re(a+b) > 0$ and $\Re(s) > |\Re(u)| + |\Re(v)|$, then
  \begin{align*}
    &\int_0^\infty K_u(ay)K_v(by)y^s \frac{dy}{y}\\
    &= \frac{2^{s-3}}{a^s\Gamma(s)}\left(\frac{b}{a}\right)^v\Gamma\!\left(\frac{s+u+v}{2}\right)\Gamma\!\left(\frac{s+u-v}{2}\right)\Gamma\!\left(\frac{s-u+v}{2}\right)\Gamma\!\left(\frac{s-u-v}{2}\right)F\!\left(\frac{s+u+v}{2},\frac{s-u+v}{2};s\,;1-\frac{b^2}{a^2}\right).
  \end{align*}
\end{lemma}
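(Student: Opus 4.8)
The plan is to reduce the product integral to a Barnes-type Mellin--Barnes integral via the Mellin convolution theorem, using the single Mellin transform from \cref{bessel_mellin}, and then to evaluate that integral by residues. Writing $f(y) = K_u(ay)$ and $g(y) = K_v(by)$, \cref{bessel_mellin} gives their Mellin transforms on the strips $\Re(z) > |\Re(u)|$ and $\Re(z) > |\Re(v)|$ respectively, and the convolution theorem $\mathcal{M}[fg](s) = \frac{1}{2\pi i}\int_{(c)}\mathcal{M}[f](z)\mathcal{M}[g](s-z)\,dz$ yields
\begin{align*}
  \int_0^\infty K_u(ay)K_v(by)y^s\frac{dy}{y} = \frac{2^{s-3}}{b^s}\cdot\frac{1}{2\pi i}\int_{(c)}\left(\frac{b^2}{a^2}\right)^{w}\Gamma\!\left(w+\tfrac u2\right)\Gamma\!\left(w-\tfrac u2\right)\Gamma\!\left(\tfrac{s+v}{2}-w\right)\Gamma\!\left(\tfrac{s-v}{2}-w\right)dw
\end{align*}
after the substitution $z = 2w$. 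The two strips overlap precisely when $|\Re(u)| < \Re(s) - |\Re(v)|$, which is the stated hypothesis $\Re(s) > |\Re(u)| + |\Re(v)|$; the condition $\Re(a+b) > 0$ is what guarantees the original integral converges at $y = \infty$, since $K_u(ay)K_v(by)$ decays like $e^{-(a+b)y}$ there.

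Next I would evaluate the Mellin--Barnes integral. Assume first $|b| < |a|$, so that $(b^2/a^2)^w$ decays as $\Re(w)\to+\infty$ and the contour may be closed to the right, picking up the poles of $\Gamma(\tfrac{s+v}{2}-w)$ at $w = \tfrac{s+v}{2}+n$ and of $\Gamma(\tfrac{s-v}{2}-w)$ at $w = \tfrac{s-v}{2}+n$. Summing the residues, using $\Gamma(-v-n) = (-1)^n\Gamma(-v)/(v+1)_n$ and $\Gamma(A+n) = \Gamma(A)(A)_n$ to recognize the Pochhammer series, produces two Gauss hypergeometric series in $b^2/a^2$,
\begin{align*}
  &\left(\tfrac{b}{a}\right)^{v}\Gamma(-v)\Gamma\!\left(\tfrac{s+u+v}{2}\right)\Gamma\!\left(\tfrac{s-u+v}{2}\right)F\!\left(\tfrac{s+u+v}{2},\tfrac{s-u+v}{2};v+1;\tfrac{b^2}{a^2}\right)\\
  &\quad + \left(\tfrac{b}{a}\right)^{-v}\Gamma(v)\Gamma\!\left(\tfrac{s+u-v}{2}\right)\Gamma\!\left(\tfrac{s-u-v}{2}\right)F\!\left(\tfrac{s+u-v}{2},\tfrac{s-u-v}{2};1-v;\tfrac{b^2}{a^2}\right),
\end{align*}
each multiplied by $2^{s-3}a^{-s}$ once the prefactor $b^{-s}(b^2/a^2)^{(s\pm v)/2}$ is absorbed.

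The final step is to fold these two series into the single hypergeometric function of the statement. This is exactly the content of the connection formula relating the solutions of the hypergeometric equation at $0$ and at $1$: with $a' = \tfrac{s+u+v}{2}$, $b' = \tfrac{s-u+v}{2}$, $c' = s$, one has $c'-a'-b' = -v$ and $a'+b'-c'+1 = v+1$, so the expansion of $F(a',b';c';1-\tfrac{b^2}{a^2})$ about $\tfrac{b^2}{a^2} = 0$ reproduces precisely the two series above, with matching Gamma prefactors. Hence the residue sum equals
\begin{align*}
  \frac{2^{s-3}}{a^s\Gamma(s)}\left(\frac{b}{a}\right)^{v}\Gamma\!\left(\tfrac{s+u+v}{2}\right)\Gamma\!\left(\tfrac{s+u-v}{2}\right)\Gamma\!\left(\tfrac{s-u+v}{2}\right)\Gamma\!\left(\tfrac{s-u-v}{2}\right)F\!\left(\tfrac{s+u+v}{2},\tfrac{s-u+v}{2};s;1-\tfrac{b^2}{a^2}\right),
\end{align*}
which is the claim; the identity then extends to all admissible $a,b$ by analytic continuation. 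I expect the main obstacle to be precisely this last folding: one must correctly invoke the connection formula (equivalently, identify the integral as the unique hypergeometric solution regular at $b = a$) and carry out the reflection-formula and Pochhammer bookkeeping so that every Gamma factor and the $(b/a)^v$ prefactor line up.
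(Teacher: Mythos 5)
The paper offers no proof of this lemma at all: the statement is quoted verbatim from the table of integrals \cite[6.576.4]{GR}, as the bracketed citation in its header indicates. Your argument is therefore not a variant of the paper's proof but a genuine derivation of the table entry, and its main steps check out. The Mellin convolution theorem applied to \cref{bessel_mellin} gives exactly your Barnes integral, and the prefactor bookkeeping is right, since $\frac{2^{s-3}}{b^s}\left(\frac{b^2}{a^2}\right)^{(s+v)/2} = \frac{2^{s-3}}{a^s}\left(\frac{b}{a}\right)^{v}$; closing the contour to the right for $|b|<|a|$ produces the two Gauss series in $b^2/a^2$ you display; and with $a'=\frac{s+u+v}{2}$, $b'=\frac{s-u+v}{2}$, $c'=s$ one has $c'-a'-b'=-v$, so the connection formula between the hypergeometric solutions at $0$ and at $1$ folds the pair into $F\!\left(a',b';s\,;1-\frac{b^2}{a^2}\right)$ with precisely the stated Gamma factors (after using the symmetry of $F$ in its first two arguments). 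Two points should be made explicit to close the argument. First, the residue computation assumes the right-hand poles are simple and non-colliding, and the connection formula assumes the exponent difference $c'-a'-b'=-v$ is not an integer; both force $v\notin\Z$, so the identity should be proved for generic parameters and extended by continuity, both sides being analytic in $(u,v,s)$. Second, the convolution step invokes \cref{bessel_mellin} for each factor separately and hence needs $\Re(a)>0$ and $\Re(b)>0$, which is strictly stronger than the stated hypothesis $\Re(a+b)>0$; your closing remark about analytic continuation in $(a,b)$ is the correct fix, but it is doing real work and belongs in the body of the proof (both sides are analytic on the connected domain where $\Re(a+b)>0$ and $a,b$ avoid the cut $(-\infty,0]$, and they agree on the open subset where $\Re(a),\Re(b)>0$ and $|b|<|a|$). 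As for what each route buys: the paper's bare citation keeps the exposition short and leans on a classical reference, while your derivation makes the lemma self-contained and explains where the right-hand side and the hypotheses $\Re(s)>|\Re(u)|+|\Re(v)|$ and $\Re(a+b)>0$ actually come from.
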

%TODO typeset
~
\subsection{Dirichlet $L$-functions}
\noindent
Given an even primitive Dirichlet character $\chi$, let
\begin{align}\label{completed_dirichlet_L_function_eqn}
  \Lambda(2s,\chi) \coloneqq \pi^{-s}N^s\Gamma(s)L(2s,\chi)
\end{align}
denote its completed Dirichlet $L$-function. It satisfies the functional equation
\begin{align*}
  \Lambda(s,\chi) = \frac{\tau(\chi)}{\sqrt{N}}\Lambda(1-s,\bar\chi),
\end{align*}
where $\tau(\chi)$ denotes the Gauss sum. When $\chi$ is the trivial character mod $1$, we will write $\Lambda(2s) \coloneqq \pi^{-s}\Gamma(s)\zeta(2s)$ to denote the completed Riemann zeta function.

\subsection{Hecke relations}\label{sec:hecke_relations}
\noindent
Let $f(z)$ be a Maass eigenform of level $N$, character $\chi$, eigenvalue $\tfrac{1}{4} + r^2$, and Hecke eigenvalues $a(n)$. Write the Fourier expansion of $f$ as
\begin{align*}
  \sum_{n\neq 0} 2 \rho\!\left(\frac{n}{|n|}\right) a(|n|) y^\half K_{ir}(2\pi|n|y)e(nx),
\end{align*}
Define
\begin{align*}
  &L(s,f) \coloneqq \sum_{n=1}^\infty \frac{a(n)}{n^s}\\
  \shortintertext{and, for any Dirichlet character $\psi$ modulo $N$,}
  &L(s,\psi\times f) \coloneqq \sum_{n=1}^\infty \frac{\psi(n)a(n)}{n^s}.
\end{align*}
The Hecke relations for $f$ are \cite[\S 2.2]{michel}
\begin{align*}
  a(m)a(d) = \sum_{r|(m,d)}\chi(r)a\!\left(\frac{md}{r^2}\right).
\end{align*}
\begin{lemma}\label{sigma_series} For $f$ and $\psi$ as defined above,
  \begin{align*}
    \sum_{n=1}^\infty \frac{\sigma_v(n,\psi)a(n)}{n^s} = \frac{L(s-v,\psi\times f)L(s,f)}{L(2s-v,\chi\psi)}
  \end{align*}
  for any $s,v \in C$ such that the series on the left converges absolutely.
\end{lemma}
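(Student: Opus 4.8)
The plan is to prove this as an identity of absolutely convergent Dirichlet series by unfolding the left-hand side directly, the key structural input being the Hecke relations for $f$. First I would expand $\sigma_v(n,\psi) = \sum_{d\mid n}\psi(d)d^v$ and reparametrize by writing $n = dm$, so that, within the region of absolute convergence,
\begin{align*}
  \sum_{n=1}^\infty \frac{\sigma_v(n,\psi)a(n)}{n^s} = \sum_{d=1}^\infty\sum_{m=1}^\infty \frac{\psi(d)}{d^{s-v}}\frac{a(dm)}{m^s}.
\end{align*}

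The crux is to handle $a(dm)$ when $d$ and $m$ are not coprime, since there the Hecke eigenvalues are not simply multiplicative. For this I would invert the stated Hecke relations to obtain
\begin{align*}
  a(dm) = \sum_{r\mid (d,m)}\mu(r)\chi(r)\,a\!\left(\tfrac{d}{r}\right)a\!\left(\tfrac{m}{r}\right).
\end{align*}
This inversion can be verified by substituting the Hecke relation back into the right-hand side: writing each $a(d/r)a(m/r)$ as $\sum_{r'\mid(d/r,m/r)}\chi(r')a\!\left(dm/(rr')^2\right)$, collecting terms according to $R = rr'$, and using the complete multiplicativity of $\chi$ together with $\sum_{r\mid R}\mu(r) = [R=1]$ leaves only the $R=1$ contribution, namely $a(dm)$.

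Substituting this expression and setting $d = rd'$, $m = rm'$, the sum factors, again by absolute convergence, as
\begin{align*}
  \left(\sum_{r=1}^\infty \frac{\mu(r)(\chi\psi)(r)}{r^{2s-v}}\right)\!\left(\sum_{d'=1}^\infty \frac{\psi(d')a(d')}{(d')^{s-v}}\right)\!\left(\sum_{m'=1}^\infty \frac{a(m')}{(m')^s}\right),
\end{align*}
where I have used $\psi(rd') = \psi(r)\psi(d')$ and $\chi(r)\psi(r) = (\chi\psi)(r)$. The second and third factors are precisely $L(s-v,\psi\times f)$ and $L(s,f)$ by definition, and the first factor equals $1/L(2s-v,\chi\psi)$, since for the completely multiplicative character $\chi\psi$ one has $\sum_r \mu(r)(\chi\psi)(r)r^{-w} = \prod_p\bigl(1-(\chi\psi)(p)p^{-w}\bigr) = 1/L(w,\chi\psi)$. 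Combining the three factors yields the claimed formula.

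The main obstacle is the inversion of the Hecke relations and the bookkeeping of the auxiliary variable $r$; the remaining steps are formal rearrangements legitimized by absolute convergence. An alternative route, which sidesteps the inversion entirely, is to factor both sides into Euler products and verify equality of the local factors prime by prime using the Satake parameters $\alpha_p,\beta_p$ determined by $\alpha_p+\beta_p = a(p)$ and $\alpha_p\beta_p = \chi(p)$. With $x = p^{-s}$ and $w = \psi(p)p^v$, the local identity reduces to the elementary rational-function computation $\frac{1}{1-w}\big(\tfrac{1}{(1-\alpha x)(1-\beta x)} - \tfrac{w}{(1-\alpha w x)(1-\beta w x)}\big) = \frac{1-\alpha\beta w x^2}{(1-\alpha x)(1-\beta x)(1-\alpha w x)(1-\beta w x)}$, whose right-hand side is exactly the local factor of $L(s-v,\psi\times f)L(s,f)/L(2s-v,\chi\psi)$.
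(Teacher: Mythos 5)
Your proof is correct, but it runs in the opposite direction from the paper's. The paper starts from the product $\left(\sum_d \psi(d)a(d)d^w\right)\left(\sum_m a(m)m^z\right)$ and applies the Hecke relations \emph{forward}, $a(m)a(d)=\sum_{r\mid(m,d)}\chi(r)a(md/r^2)$, which after reindexing exhibits that product as $L(-w-z,\chi\psi)$ times the target double sum $\sum_{d,m}\psi(d)d^w m^z a(md)$; it then divides through by $L(-w-z,\chi\psi)$ to solve for the target and substitutes $w=v-s$, $z=-s$. You instead \emph{invert} the Hecke relations via M\"obius, $a(dm)=\sum_{r\mid(d,m)}\mu(r)\chi(r)a(d/r)a(m/r)$, and substitute directly into the left-hand side, so that the factor $1/L(2s-v,\chi\psi)$ emerges as the Dirichlet series $\sum_r \mu(r)(\chi\psi)(r)r^{-(2s-v)}$ rather than by division; your verification of the inversion is itself exactly the paper's forward identity, so the two arguments are equivalent in content, with yours having the small advantage of never dividing by an $L$-function. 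Your Euler-product alternative with Satake parameters ($\alpha_p+\beta_p=a(p)$, $\alpha_p\beta_p=\chi(p)$) is also valid and is the most elementary of the three routes; it is the same style of local computation used for the Ramanujan identity, Lemma~\ref{ramanujan}. One caveat applies to your write-up just as it does to the paper's: the rearrangements (and, in your case, the extra signed terms introduced by M\"obius inversion) are justified only in a half-plane of sufficiently large $\Re(s)$ where all the series involved converge absolutely, so to obtain the statement as claimed --- for every $s,v$ at which the left-hand series converges absolutely --- you should finish, as the paper explicitly does, by noting that both sides are analytic and invoking analytic continuation.
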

\begin{proof}
  \begin{align*}
    \sum_{n=1}^\infty \frac{\sigma_v(n,\psi)a(n)}{n^s} &= \sum_{n=1}^\infty\sum_{d|n} \psi(d)d^va(n)n^{-s}\\
    &= \sum_{d=1}^\infty\sum_{m=1}^\infty \psi(d)d^v(md)^{-s}a(md).
  \end{align*}
  Consider the expression
  \begin{align*}
    \sum_{d=1}^\infty\sum_{m=1}^\infty \psi(d) d^w m^z a(m)a(d) = \left(\sum_{d=1}^\infty\psi(d)a(d)d^w\right)\left(\sum_{m=1}^\infty a(m)m^z\right).
  \end{align*}
  Using the Hecke relations, this becomes
  \begin{align*}
    \sum_{d=1}^\infty\sum_{m=1}^\infty \psi(d) d^w m^z \sum_{r|(m,d)}\chi(r)a\!\left(\frac{md}{r^2}\right) &= \sum_{r=1}^\infty\sum_{d=1}^\infty\sum_{m=1}^\infty \psi(rd) (rd)^w (rm)^z \chi(r)a(md)\\
    &= \sum_{r=1}^\infty\sum_{d=1}^\infty\sum_{m=1}^\infty \chi(r)\psi(r) r^{w+z} \psi(d)d^w m^z a(md)\\
    &= \left(\sum_{r=1}^\infty \chi(r)\psi(r) r^{w+z}\right)\left(\sum_{d=1}^\infty\sum_{m=1}^\infty \psi(d) d^w m^z a(md)\right)
  \end{align*}
  Rearranging,
  \begin{align*}
    \sum_{d=1}^\infty\sum_{m=1}^\infty \psi(d) d^w m^z a(md) &= \frac{\left(\sum_{d=1}^\infty\psi(d)a(d)d^w\right)\left(\sum_{m=1}^\infty a(m)m^z\right)}{\sum_{r=1}^\infty \chi(r)\psi(r) r^{w+z}}\\
    &= \frac{L(-w,\psi\times f)L(-z,f)}{L(-w-z,\chi\psi)}.
  \end{align*}
  While the series representations that arose in the intermediate steps are only valid in some half-plane, the expression as a ratio of $L$-functions is valid whenever the series its equal to converges by analytic continuation.\\
  \\
  Substituting $w = v-s$ and $z = -s$ yields
  \begin{align*}
    \sum_{d=1}^\infty\sum_{m=1}^\infty \psi(d)d^v(md)^{-s}a(md) = \frac{L(s-v,\psi\times f)L(s,f)}{L(2s-v,\chi\psi)}.
  \end{align*}
\end{proof}

\subsection{The sum of divisors function $\sigma_s(n,\chi)$}
\noindent
The sum of divisors function $\sigma_s(n,\chi)$ is defined as
$$\sigma_s(n,\chi) \coloneqq \sum_{\substack{d\mid n\\d > 0}}\chi(d)d^s.$$
Note that $\sigma_s(n,\chi) = \sigma_s(-n,\chi)$. When we write $\sum_{d|n}$, we will always mean $d > 0$.\\
\\
The following identity is due to Ramanujan. 
\begin{lemma}\label{ramanujan} For $s \in \C$ such that the series below converges absolutely,
  \begin{align*}
    \sum_{n=1}^\infty \frac{\sigma_u(n,\chi)\sigma_v(n,\psi)}{n^s} = \frac{\zeta(s)L(s-u,\chi)L(s-v,\psi)L(s-u-v,\chi\psi)}{L(2s-u-v,\chi\psi)}.
  \end{align*}
\end{lemma}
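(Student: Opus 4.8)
The plan is to prove the identity by expanding the left-hand side as an Euler product and matching it factor-by-factor against the Euler product of the claimed ratio of $L$-functions. First I would note that, for fixed $u$ and $\chi$, the map $n \mapsto \sigma_u(n,\chi)$ is multiplicative, being the Dirichlet convolution of the completely multiplicative function $d \mapsto \chi(d)d^u$ with the constant function $1$. Hence $n \mapsto \sigma_u(n,\chi)\sigma_v(n,\psi)$ is multiplicative as a product of multiplicative functions, so in the region of absolute convergence the Dirichlet series factors as
\[
  \sum_{n=1}^\infty \frac{\sigma_u(n,\chi)\sigma_v(n,\psi)}{n^s} = \prod_p \sum_{j=0}^\infty \frac{\sigma_u(p^j,\chi)\sigma_v(p^j,\psi)}{p^{js}}.
\]

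The heart of the argument is the evaluation of the local factor at each prime $p$. Writing $x = p^{-s}$, $\alpha = \chi(p)p^u$, and $\beta = \psi(p)p^v$, the finite geometric sums give $\sigma_u(p^j,\chi) = (\alpha^{j+1}-1)/(\alpha-1)$ and likewise for $\beta$. Expanding $(\alpha^{j+1}-1)(\beta^{j+1}-1)$ into four geometric series in $j$ and summing each, the local factor becomes
\[
  (\alpha-1)(\beta-1)\sum_{j=0}^\infty \sigma_u(p^j,\chi)\sigma_v(p^j,\psi)x^j = \frac{\alpha\beta}{1-\alpha\beta x} - \frac{\alpha}{1-\alpha x} - \frac{\beta}{1-\beta x} + \frac{1}{1-x}.
\]
I would then put the right-hand side over the common denominator $(1-x)(1-\alpha x)(1-\beta x)(1-\alpha\beta x)$ and verify that the numerator collapses to $(\alpha-1)(\beta-1)(1-\alpha\beta x^2)$, so that after cancellation
\[
  \sum_{j=0}^\infty \sigma_u(p^j,\chi)\sigma_v(p^j,\psi)x^j = \frac{1-\alpha\beta x^2}{(1-x)(1-\alpha x)(1-\beta x)(1-\alpha\beta x)}.
\]

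Finally, substituting $\alpha\beta = \chi\psi(p)p^{u+v}$ and taking the product over $p$, the four denominator factors produce $\zeta(s)$, $L(s-u,\chi)$, $L(s-v,\psi)$, and $L(s-u-v,\chi\psi)$ respectively (using $\prod_p(1-\eta(p)p^{-\sigma})^{-1} = L(\sigma,\eta)$), while the numerator factor $1 - \chi\psi(p)p^{u+v-2s}$ produces $L(2s-u-v,\chi\psi)^{-1}$. This yields exactly the asserted ratio.

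The step I expect to be the main obstacle is the algebraic simplification showing the four-term combination collapses to $(\alpha-1)(\beta-1)(1-\alpha\beta x^2)$; everything else is bookkeeping. Two minor points also need attention: the Euler product manipulation is valid only in the stated range of absolute convergence, and the special prime values where one of $\alpha,\beta,\alpha\beta$ equals $1$, or where $p \mid N$ forces $\chi(p)=0$ or $\psi(p)=0$, must be read as the evident limits of the formula (or handled by analytic continuation). This computation is the two-divisor-sum analogue of the Hecke-relation manipulation carried out in the proof of \cref{sigma_series}.
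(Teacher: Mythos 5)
Your proposal is correct and follows essentially the same route as the paper's own proof: both exploit multiplicativity of $n \mapsto \sigma_u(n,\chi)\sigma_v(n,\psi)$ to pass to an Euler product, write $\sigma_u(p^j,\chi)$ as a finite geometric sum in $\chi(p)p^u$, expand the local factor into four geometric series, and verify algebraically that the numerator collapses to leave $\bigl(1-\chi\psi(p)p^{u+v-2s}\bigr)$ over the four expected Euler factors. The only differences are notational (your $\alpha,\beta,x$ versus the paper's $\chi_p,\psi_p,p^{-s}$), and your explicit attention to the degenerate cases $\chi(p)=0$ or $\alpha=1$ is a harmless refinement the paper leaves implicit.
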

\noindent
A proof is given for trivial characters and a specific quadratic character in \cite{wilson}, and while the more general version we state here is surely well known, we couldn't find an explicit reference. The proof of \ref{ramanujan} is essentially identical to the one for trivial characters.%, so we give a proof which is essentially identical to the one for trivial characters.

\subsection{Eisenstein series}\label{eisenstein_section}
\noindent
Define
\begin{align*}
  \Gamma_0(N) \coloneqq \left\{\begin{pmatrix}a&b\\c&d\end{pmatrix} \in \text{SL}_2(\Z)\,:\, N|c\right\}.
\end{align*}
For a cusp $\fa$ of $\Gamma_0(N)\backslash\HH$, let $\Gamma_\fa$ be the stabilizer of $\fa$ in $\Gamma_0(N)$, and let $\sigma_\fa$ be a matrix in $\text{SL}_2(\R)$ such that $\sigma_\fa\Gamma_\infty\sigma_\fa^{-1} = \Gamma_\fa$ and $\sigma_\fa\infty = \fa$. We will take
\begin{align*}
  \Gamma_\infty = \left\{\begin{pmatrix}1 & x\\0 & 1\end{pmatrix}\,:\,x\in\Z\right\} \quad\text{and}\quad \Gamma_0 = \left\{\begin{pmatrix}1 & 0\\-Nx & 1\end{pmatrix}\,:\,x\in\Z\right\},
\end{align*}
as well as
\begin{align*}
  \sigma_{i\infty} = \begin{pmatrix} 1 & 0 \\ 0 & 1 \end{pmatrix} \quad\text{and}\quad \sigma_0 = N^{-\half}\begin{pmatrix} 0 & -1 \\ N & 0 \end{pmatrix}.
\end{align*}
The matrices $\sigma_\fa$ are called \textit{scaling matrices.} Note that $\sigma_0$ acts as the Fricke involution.\\
\\
Let $N$ be a positive integer and $\chi$ an even Dirichlet character mod $N$. Define the \textit{Eisenstein series} $E_\fa(z,s,\chi)$ by
\begin{align*}
  E_\fa(z,s,\chi) \coloneqq \sum_{\gamma\in\Gamma_\fa\backslash\Gamma_0(N)}\overline{\chi(\gamma)}\text{Im}(\sigma_\fa^{-1}\gamma z)^s.
\end{align*}
We will sometimes call $E_\fa(z,s,\chi)$ the \textit{Eisenstein series attached to the cusp $\fa$}.\\
\\
For the remainder of the section, we will make the following assumptions.
\begin{assumption}\label{young_assumptions}
  ~
  \begin{enumerate}
    \item $N$ is prime,
    \item the weight $k$ appearing in \cite{young} is $0$,
    \item all characters that appear are either the trivial character mod $1$ or a primitive even character mod $N$.
  \end{enumerate}
\end{assumption}
\noindent
We will write $\mathbbm{1}$ to denote the trivial character mod $1$, and we will denote the conductors of $\chi_1$ and $\chi_2$ by $q_1$ and $q_2$ respectively.\\
\\
Young defines the \textit{Eisenstein series attached to the characters $\chi_1$ and $\chi_2$} as follows \cite[(3.3)]{young}.
\begin{align*}
  E_{\chi_1,\chi_2}(z,s) \coloneqq \half \sum_{(c,d)=1} \frac{(q_2y)^s \chi_1(c)\chi_2(d)}{|cq_2z+d|^{2s}}.
\end{align*}
In the definition above, we've taken the weight $k$ in equation \cite[(3.3)]{young} to be $0$, as per \cref{young_assumptions}, which we will refrain from mentioning henceforth. We will not need to know the above definition at any point in what follows, and we include it only for completeness.\\
\\
The Eisenstein series $E_{\chi_1,\chi_2}(z,s)$ is an automorphic function on $\Gamma_0(q_1q_2)$ with nebentypus $\chi_1\overline{\chi_2}$. The completed version of this Eisenstein series is defined by
\begin{align*}
  E_{\chi_1, \chi_2}^*(z,s) \coloneqq \frac{q_2^s}{\tau(\chi_2)\pi^s}\Gamma(s)L(2s,\chi_1\chi_2) E_{\chi_1, \chi_2}(z,s).
\end{align*}
The Eisenstein series attached to cusps can be expressed as follows \cite[(6.2)]{young}.
\begin{align}\label{eisenstein_character_to_cusp}
  E_{i\infty}(z,s,\psi) &= N^{-s}E_{\mathbbm{1},\bar\psi}(z,s) = \frac{\tau(\bar\psi)}{\Lambda(2s,\bar\psi)}E_{\mathbbm{1},\bar\psi}^*(z,s),\\
  E_{0}(z,s,\psi) &= N^{-s}E_{\psi,\mathbbm{1}}(z,s) = \frac{1}{\Lambda(2s,\psi)}E_{\psi,\mathbbm{1}}^*(z,s).\nonumber
\end{align}
The Fourier expansions of these completed Eisenstein series are given below.
\begin{lemma}[{\cite[Thm 4.1]{young}}]\label{eisenstein_series_characters_fourier_expansions}
  \begin{align*}
    &E_{\mathbbm{1},\bar\psi}^*(z,s) = \frac{N^s}{\tau(\bar\psi)}\Lambda(2s,\bar\psi)y^s + \sum_{n\neq 0}2|n|^{\half-s}\sigma_{2s-1}(n,\psi) y^\half K_{s-\half}(2\pi|n|y)e(nx),\\
    &E_{\psi,\mathbbm{1}}^*(z,s) = \frac{N^{1-s}}{\tau(\bar\psi)}\Lambda(2-2s,\bar\psi)y^{1-s} + \sum_{n\neq 0}2|n|^{s-\half}\sigma_{1-2s}(n,\psi) y^\half K_{s-\half}(2\pi|n|y)e(nx).
  \end{align*}
\end{lemma}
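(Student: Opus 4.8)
The plan is to compute the Fourier expansion of $E_{\mathbbm{1},\bar\psi}(z,s)$ directly from its definition and then pass to the completed series and to the second line by a symmetry/functional-equation argument. Writing $z = x+iy$ and using that the summand is invariant under $(c,d)\mapsto(-c,-d)$ (here $\bar\psi$ is even), the prefactor $\half$ lets us sum over $c>0$ and over $d$ coprime to $c$, plus a diagonal $c=0$ term. Since $(0,d)=1$ forces $d=\pm1$ and $\bar\psi(\pm1)=1$, the $c=0$ terms contribute exactly $(Ny)^s$. Multiplying by the completion factor and using \eqref{completed_dirichlet_L_function_eqn} to rewrite
\begin{align*}
  \frac{N^s}{\tau(\bar\psi)\pi^s}\Gamma(s)L(2s,\bar\psi) = \frac{\Lambda(2s,\bar\psi)}{\tau(\bar\psi)},
\end{align*}
the diagonal contribution becomes the constant term $\tfrac{N^s}{\tau(\bar\psi)}\Lambda(2s,\bar\psi)\,y^s$ claimed in the first line.

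For the off-diagonal terms I would extract the coefficient of $e(nx)$ by Poisson summation in $d$. The key analytic input is the Fourier transform
\begin{align*}
  \int_{-\infty}^\infty (t^2+y^2)^{-s}\,e(-\xi t)\,dt = \frac{2\pi^s}{\Gamma(s)}\,|\xi|^{s-\half}\,y^{\half-s}\,K_{s-\half}(2\pi|\xi|y), \qquad \xi \neq 0,
\end{align*}
which manufactures the $K$-Bessel factor $y^{\half}K_{s-\half}(2\pi|n|y)$ once the powers of $y$ are collected. The $\xi=0$ frequency, which would otherwise produce a second constant term proportional to $y^{1-s}$, is annihilated here because summing the nontrivial character $\bar\psi(d)$ over a full period vanishes; this is exactly why the completed series has a ``pure'' constant term, and it uses the hypothesis that $\psi$ is nontrivial.

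The arithmetic heart of the argument — and the step I expect to be the main obstacle — is the evaluation of the remaining double sum, over $c>0$ and over residues of $d$, of the above integral weighted by a Gauss–Ramanujan-type character sum in $d$. One must disentangle the coprimality condition $(c,d)=1$ from the character $\bar\psi$ of modulus $N$, peel off the level contribution, and recognize that the resulting twisted divisor sum collapses to $\sigma_{2s-1}(n,\psi)$ accompanied by the single Gauss-sum factor $\tau(\bar\psi)$. Matching the accumulated powers of $\pi$, $N$, and $\Gamma(s)$ against the completion factor then yields precisely the coefficient $2|n|^{\half-s}\sigma_{2s-1}(n,\psi)$. Throughout, every identity is first proved in the half-plane of absolute convergence and then extended in $s$ by analytic continuation.

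Finally, the expansion of $E_{\psi,\mathbbm{1}}^*(z,s)$ follows by the dual of this computation together with the functional equation $E_{\psi,\mathbbm{1}}^*(z,s) = E_{\mathbbm{1},\bar\psi}^*(z,1-s)$. In the dual computation the character is now attached to the variable $c$, so the $c=0$ term is absent (as $\psi(0)=0$) while the $\xi=0$ frequency no longer carries a nontrivial character and therefore survives, producing the $y^{1-s}$ constant term in place of $y^s$. Concretely, substituting $s\mapsto 1-s$ in the first line sends $y^s\mapsto y^{1-s}$, sends $\sigma_{2s-1}(n,\psi)\mapsto\sigma_{1-2s}(n,\psi)$, and fixes $K_{s-\half}=K_{\half-s}$, reproducing the second line; one may either rerun the Poisson argument verbatim in this dual setup or invoke the functional equation of the completed character Eisenstein series, which is the route taken in \cite{young}.
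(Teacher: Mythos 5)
First, note that the paper contains no proof of this lemma: it is imported wholesale from Young, as the citation \cite[Thm 4.1]{young} in the lemma header indicates. So the relevant comparison is with Young's argument, and your outline is essentially that same argument --- split off $c=0$, Poisson summation in $d$, the $K$-Bessel Fourier transform, orthogonality killing the zero frequency. The pieces you make explicit all check out: the diagonal terms $d=\pm 1$ give $(Ny)^s$, which the completion factor $\frac{N^s}{\tau(\bar\psi)\pi^s}\Gamma(s)L(2s,\bar\psi)$ turns into $\frac{N^s}{\tau(\bar\psi)}\Lambda(2s,\bar\psi)y^s$; your Fourier transform identity is correct; the zero mode vanishes because (with $N$ prime) the character sum in $d_0$ is a sum of a nontrivial character over a group, whether or not $N \mid c$; and the substitution $s\mapsto 1-s$ together with $K_u=K_{-u}$ does carry the first line to the second.

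The genuine gap is the step you yourself flag as ``the main obstacle'': nothing in the proposal actually evaluates the arithmetic sum, and that evaluation \emph{is} the theorem. Concretely, after Poisson summation the $n$-th coefficient of the uncompleted series is an explicit multiple of
\begin{align*}
  \sum_{c>0}(cN)^{-2s}\sum_{\substack{d_0 \bmod cN\\ (d_0,c)=1}}\bar\psi(d_0)\,e\!\left(\frac{n d_0}{cN}\right),
\end{align*}
and the claimed expansion is exactly the assertion that this equals $\frac{\tau(\bar\psi)}{N^{2s}}\cdot\frac{|n|^{1-2s}\sigma_{2s-1}(n,\psi)}{L(2s,\bar\psi)}$, so that the $L(2s,\bar\psi)$ from the completion factor cancels and the coefficient collapses to $2|n|^{\half-s}\sigma_{2s-1}(n,\psi)$. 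Establishing this requires factoring the inner sum into a Gauss sum modulo $N$ times a Ramanujan-type sum modulo $c$, with separate treatment of the cases $N\mid c$ and $N\mid n$ (where the evaluation $\sum_{d \bmod N}\bar\psi(d)e(nd/N)=\psi(n)\tau(\bar\psi)$ fails); this occupies the technical core of Young's paper and is not routine bookkeeping. One further caution about your fallback for the second line: the functional equation $E_{\psi,\mathbbm{1}}^*(z,s)=E_{\mathbbm{1},\bar\psi}^*(z,1-s)$ is, in Young's development, \emph{deduced from} the Fourier expansions being proven here, so invoking it would be circular in a self-contained proof; the honest route is the dual Poisson computation you mention, which in turn needs the functional equation of the Dirichlet $L$-function to convert the constant term it produces (proportional to $\Lambda(2s-1,\psi)y^{1-s}$) into the stated form $\frac{N^{1-s}}{\tau(\bar\psi)}\Lambda(2-2s,\bar\psi)y^{1-s}$.
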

\noindent
Consequently,
\begin{lemma}\label{eisenstein_series_cusp_fourier_expansions}
  \begin{align*}
    &E_{i\infty}(z,s,\psi) = y^{s} + \sum_{n\neq 0}\frac{2\tau(\bar\psi)|n|^{\half-s}\sigma_{2s-1}(n,\psi)}{N^s\Lambda(2s,\bar\psi)} y^\half K_{s-\half}(2\pi|n|y)e(nx),\\
    \shortintertext{and}
    &E_{0}(z,s,\psi) = \frac{N^{1-s}}{\tau(\bar\psi)}\frac{\Lambda(2-2s,\bar\psi)}{\Lambda(2s,\psi)} y^{1-s} + \sum_{n\neq 0}\frac{2|n|^{s-\half}\sigma_{1-2s}(n,\psi)}{\Lambda(2s,\psi)} y^\half K_{s-\half}(2\pi|n|y)e(nx).
  \end{align*}
\end{lemma}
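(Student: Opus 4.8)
The plan is to obtain both expansions by direct substitution: the two identities in \eqref{eisenstein_character_to_cusp} express $E_{i\infty}(z,s,\psi)$ and $E_0(z,s,\psi)$ as explicit scalar multiples of the completed Eisenstein series $E_{\mathbbm{1},\bar\psi}^*(z,s)$ and $E_{\psi,\mathbbm{1}}^*(z,s)$, whose Fourier expansions are recorded in \cref{eisenstein_series_characters_fourier_expansions}. Plugging one into the other and simplifying term by term yields the claimed formulas. No new analytic input is required; the entire content is careful bookkeeping of the archimedean, Gauss-sum, and $N$-power factors.

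For the cusp at $i\infty$, I would combine the first equality $E_{i\infty}(z,s,\psi) = N^{-s}E_{\mathbbm{1},\bar\psi}(z,s)$ with the completion identity $E_{\mathbbm{1},\bar\psi}^*(z,s) = \tfrac{\Lambda(2s,\bar\psi)}{\tau(\bar\psi)}E_{\mathbbm{1},\bar\psi}(z,s)$, which follows from the definition of $E^*_{\chi_1,\chi_2}$ together with \eqref{completed_dirichlet_L_function_eqn} (here $\chi_2 = \bar\psi$ has conductor $N$). This gives the effective prefactor $\tfrac{N^{-s}\tau(\bar\psi)}{\Lambda(2s,\bar\psi)}$ acting on $E_{\mathbbm{1},\bar\psi}^*(z,s)$. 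Applying it to the constant term $\tfrac{N^s}{\tau(\bar\psi)}\Lambda(2s,\bar\psi)y^s$ of \cref{eisenstein_series_characters_fourier_expansions}, the factors $N^s$, $\tau(\bar\psi)$, and $\Lambda(2s,\bar\psi)$ all cancel, leaving the normalized leading term $y^s$. Applying the same prefactor to the generic coefficient $2|n|^{\half-s}\sigma_{2s-1}(n,\psi)$ produces exactly $\tfrac{2\tau(\bar\psi)|n|^{\half-s}\sigma_{2s-1}(n,\psi)}{N^s\Lambda(2s,\bar\psi)}$, as stated.

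For the cusp at $0$, the relevant identity is $E_0(z,s,\psi) = \tfrac{1}{\Lambda(2s,\psi)}E_{\psi,\mathbbm{1}}^*(z,s)$, and here the $N^{\pm s}$ factors in the completion already balance, so no extra normalization is needed. Substituting the second expansion of \cref{eisenstein_series_characters_fourier_expansions}, the constant term $\tfrac{N^{1-s}}{\tau(\bar\psi)}\Lambda(2-2s,\bar\psi)y^{1-s}$ simply acquires the denominator $\Lambda(2s,\psi)$, giving $\tfrac{N^{1-s}}{\tau(\bar\psi)}\tfrac{\Lambda(2-2s,\bar\psi)}{\Lambda(2s,\psi)}y^{1-s}$, while each coefficient $2|n|^{s-\half}\sigma_{1-2s}(n,\psi)$ becomes $\tfrac{2|n|^{s-\half}\sigma_{1-2s}(n,\psi)}{\Lambda(2s,\psi)}$, matching both summands in the statement.

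The only point demanding attention is the consistent tracking of the completion factors $N^{\pm s}$, $\tau(\bar\psi)$, and $\Lambda(2s,\cdot)$, and in particular the factor $N^{-s}$ coming from $E_{i\infty}=N^{-s}E_{\mathbbm{1},\bar\psi}$, which is precisely what normalizes the leading coefficient at $i\infty$ to $1$. Beyond this purely algebraic simplification there is no genuine obstacle, since the lemma is a formal consequence of \eqref{eisenstein_character_to_cusp} and \cref{eisenstein_series_characters_fourier_expansions}.
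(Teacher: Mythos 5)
Your proposal is correct and is essentially the paper's own (unwritten) proof: the lemma is presented as an immediate consequence of \eqref{eisenstein_character_to_cusp} and \cref{eisenstein_series_characters_fourier_expansions}, which is exactly the term-by-term substitution you carry out, including the cancellation at the cusp $0$ where the $N^{\pm s}$ factors balance. One point in your favor: the effective prefactor $\tfrac{N^{-s}\tau(\bar\psi)}{\Lambda(2s,\bar\psi)}$ you derive at the cusp $i\infty$ (from the first equality of \eqref{eisenstein_character_to_cusp} together with the definition of $E^*_{\mathbbm{1},\bar\psi}$ and \eqref{completed_dirichlet_L_function_eqn}) differs by $N^{-s}$ from the second equality as displayed in \eqref{eisenstein_character_to_cusp}, and yours is the one consistent with the lemma's normalization of the constant term to $y^s$, so your bookkeeping has in effect also flagged a typo in that display.
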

\noindent
Eisenstein series attached to characters satisfy the following functional equation \cite[Prop 6.2]{young}
\begin{align*}
  E_{\chi_1,\chi_2}^*(z,s) = E_{\overline{\chi_2},\overline{\chi_1}}^*(z,1-s).
\end{align*}
Moreover, $E_{\chi_1,\chi_2}^*(z,s)$ extends to a meromorphic function for all $s \in \C$, and, if $q_1q_2 > 1$, then it is analytic for all $s \in \C$ \cite[Prop 6.2]{young}.\\
\\
The action of the Fricke involution $\sigma_0$ on $E_{\chi_1,\chi_2}(z,s)$ is given by \cite[\S 9.2]{young}
\begin{align*}
  E_{\chi_1,\chi_2}(\sigma_0 z,s) = E_{\chi_2,\chi_1}(z,s).
\end{align*}
Thus,
\begin{align*}
  &E_{i\infty}(\sigma_0 z, s, \psi) = E_0(z,s,\bar\psi)\\
  \shortintertext{and}
  &E_{\chi_1,\chi_2}^*(\sigma_0 z,s) = \left(\frac{q_2}{q_1}\right)^s \frac{\tau(\chi_1)}{\tau(\chi_2)} E_{\chi_2,\chi_1}^*(z,s).
\end{align*}

\section{$L$-series construction}
\noindent
Our main object of study will be the following function.
\begin{definition}\label{V_def} Let $N$ be a prime and let $\chi$ and $\psi$ be even nontrivial Dirichlet characters mod $N$ such that $\chi\psi$ is nontrivial. Let $u$ and $v$ be nonzero complex numbers such that $|\Re(u)| + |\Re(v)| < \thalf$. Define
  \begin{align*}
    V(z) \coloneqq E_{\mathbbm{1},\bar\chi}^*\!\left(z,\half+u\right) E_{\mathbbm{1},\bar\psi}^*\!\left(z,\half+v\right).
  \end{align*}
\end{definition}
\noindent
See \cref{eisenstein_section} for the definition of the Eisenstein series appearing above.\\
\\
We are interested in the function $V$ because the Fourier coefficients of $V$ can be expressed in terms of $L$-series associated to shifted convolutions of divisor functions. Using the formulas for the Fourier expansions of the Eisenstein series given in \ref{eisenstein_series_characters_fourier_expansions}, we derive the following expressions for the Fourier coefficients of $V(z)$ and $V(\sigma_0 z)$, where $\sigma_0$ is the scaling matrix given in \cref{eisenstein_section}. Below, the function $\Lambda$ denotes the completed Dirichlet $L$-function, defined by \eqref{completed_dirichlet_L_function_eqn}, and $K$ denotes the Bessel function, defined by \eqref{bessel_def}.
\begin{lemma}\label{V_coeffs}
  \begin{alignat*}{2}
    &\int_0^1 V(z)\,dx
    &&= \frac{N^{1+u+v}}{\tau(\bar\chi)\tau(\bar\psi)}\Lambda(1+2u,\bar\chi)\Lambda(1+2v,\bar\psi)y^{1+u+v}\\
    &&&+ \sum_{n=1}^\infty \frac{8\sigma_{2u}(n,\chi)\sigma_{2v}(n\psi)}{n^{u+v}}K_u(2\pi ny)K_v(2\pi ny)y,\\
    %\shortintertext{and}
    &\int_0^1 V(\sigma_0 z)\,dx
    &&= \Lambda(1-2u,\chi)\Lambda(1-2v,\psi)y^{1-u-v}\\
    &&&+ \sum_{n=1}^\infty \frac{N^{1+u+v}}{\tau(\bar\chi)\tau(\bar\psi)} \frac{8\sigma_{-2u}(n,\bar\chi)\sigma_{-2v}(n\bar\psi)}{n^{-u-v}}K_u(2\pi ny)K_v(2\pi ny)y,\\
    %\shortintertext{and, for any positive integer $k$,}
    &\int_0^1 V(z)\,e(-kx)\,dx
    &&= \frac{N^{\half+u}}{\tau(\bar\chi)}\Lambda(1+2u,\bar\chi)\cdot 2k^{-v} \sigma_{2v}(k,\psi) K_v(2\pi ky) y^{1+u}\\
    &&&+ \frac{N^{\half+v}}{\tau(\bar\psi)}\Lambda(1+2v,\bar\psi)\cdot 2k^{-u} \sigma_{2u}(k,\chi) K_u(2\pi ky) y^{1+v}\\
    &&&+ \sum_{n \neq 0,k}\frac{4\sigma_{2u}(n,\chi)\sigma_{2v}(n-k,\psi)}{|n|^u|n-k|^v}K_u(2\pi|n|y)K_v(2\pi|n-k|y)y.
  \end{alignat*}
  Above, $k$ is an arbitrary positive integer.
\end{lemma}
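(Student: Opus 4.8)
The plan is to reduce everything to the two Fourier expansions recorded in \cref{eisenstein_series_characters_fourier_expansions}, together with the Fricke action recorded at the end of \cref{eisenstein_section}, and then to read off each Fourier coefficient of $V$ as a convolution of the coefficients of its two factors. First I would substitute $s = \thalf + u$ (respectively $\thalf + v$) into the first expansion of \cref{eisenstein_series_characters_fourier_expansions}, so that $E_{\mathbbm{1},\bar\chi}^*(z,\thalf+u)$ has constant term $\frac{N^{1/2+u}}{\tau(\bar\chi)}\Lambda(1+2u,\bar\chi)y^{1/2+u}$ and $m$-th Fourier coefficient $2|m|^{-u}\sigma_{2u}(m,\chi)y^{1/2}K_u(2\pi|m|y)$, and similarly for the $\psi$-factor. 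In the strip $|\Re(u)|+|\Re(v)| < \thalf$ the $K$-Bessel factors decay exponentially, so both series converge absolutely and the interchange of summation with the $x$-integration below is justified.

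Writing $V(z)$ as the product of these two series, its $k$-th Fourier coefficient $\int_0^1 V(z)e(-kx)\,dx$ is the sum over pairs of coefficients whose frequencies add to $k$. For $k=0$ this picks out the constant$\times$constant term, giving the stated main term $\frac{N^{1+u+v}}{\tau(\bar\chi)\tau(\bar\psi)}\Lambda(1+2u,\bar\chi)\Lambda(1+2v,\bar\psi)y^{1+u+v}$, plus the diagonal $\sum_{m\neq 0}$ of the frequency-$m$ coefficient against the frequency-$(-m)$ one; collapsing the $m$ and $-m$ terms via $\sigma_s(-n,\chi)=\sigma_s(n,\chi)$ and $K_u=K_{-u}$ produces the factor $8$ and the advertised sum. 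For $k>0$ the constant$\times$frequency-$k$ and frequency-$k\times$constant terms give the two single-Bessel main terms, and the frequency-$m$ against frequency-$(k-m)$ diagonal, with $m \neq 0,k$, gives the remaining shifted-convolution sum after reindexing $n=m$ and using $\sigma_{2v}(k-n,\psi)=\sigma_{2v}(n-k,\psi)$.

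For the integrals involving $V(\sigma_0 z)$ I would first rewrite the Fricke-translated factors using $E_{\chi_1,\chi_2}^*(\sigma_0 z,s) = (q_2/q_1)^s\frac{\tau(\chi_1)}{\tau(\chi_2)}E_{\chi_2,\chi_1}^*(z,s)$, which with $q_1=1$, $q_2=N$, and $\tau(\mathbbm{1})=1$ turns $E_{\mathbbm{1},\bar\chi}^*(\sigma_0 z,\thalf+u)$ into $\frac{N^{1/2+u}}{\tau(\bar\chi)}E_{\bar\chi,\mathbbm{1}}^*(z,\thalf+u)$. Expanding the latter by the second formula of \cref{eisenstein_series_characters_fourier_expansions} gives constant term $\frac{N}{\tau(\bar\chi)\tau(\chi)}\Lambda(1-2u,\chi)y^{1/2-u}$ and $n$-th coefficient $\frac{N^{1/2+u}}{\tau(\bar\chi)}2|n|^{u}\sigma_{-2u}(n,\bar\chi)y^{1/2}K_u(2\pi|n|y)$, with the analogous statement for $\psi$. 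The key simplification is $\tau(\chi)\tau(\bar\chi)=\chi(-1)N = N$, valid since $\chi$ is even, which collapses the leading coefficient to exactly $\Lambda(1-2u,\chi)$; the constant$\times$constant term then produces the main term $\Lambda(1-2u,\chi)\Lambda(1-2v,\psi)y^{1-u-v}$, and the same diagonal computation as before yields the stated sum.

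The argument is essentially bookkeeping, so I do not anticipate a genuine obstacle: the only points requiring care are recognizing the Gauss-sum identities $\tau(\mathbbm{1})=1$ and $\tau(\chi)\tau(\bar\chi)=N$ that make the $V(\sigma_0 z)$ leading terms collapse, correctly combining the $n$ and $-n$ contributions to each diagonal, and confirming absolute convergence in the given strip. The main practical risk is sign or conjugation slips in matching $\sigma_{2u}$ against $\sigma_{-2u}$ and in tracking the characters inside the various Gauss sums.
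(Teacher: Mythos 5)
Your proposal is correct and follows exactly the route the paper intends: the paper derives \cref{V_coeffs} by substituting $s=\thalf+u$, $\thalf+v$ into the expansions of \cref{eisenstein_series_characters_fourier_expansions}, multiplying the two series, collecting the frequency-$0$ and frequency-$k$ terms, and handling $V(\sigma_0 z)$ via the Fricke action $E_{\chi_1,\chi_2}^*(\sigma_0 z,s) = (q_2/q_1)^s\frac{\tau(\chi_1)}{\tau(\chi_2)}E_{\chi_2,\chi_1}^*(z,s)$ together with $\tau(\chi)\tau(\bar\chi)=\chi(-1)N=N$ for even primitive $\chi$. All the bookkeeping you describe (the $\pm n$ collapse producing the factors $8$ and $4$, the use of $\sigma_s(-n,\chi)=\sigma_s(n,\chi)$ and $K_u=K_{-u}$, and the Gauss-sum cancellation in the $V(\sigma_0 z)$ constant term) checks out against the stated coefficients.
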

\noindent
We will need to refer to the individual terms of the $0^{\text{th}}$ Fourier coefficients of $V(z)$ and $V(\sigma_0 z)$. Thus, we make the following definitions.
\begin{definition}\label{V_constant_term_def}
  \begin{align*}
    C_{i\infty}(y) &\coloneqq \sum_{n=1}^\infty \frac{8\sigma_{2u}(n,\chi)\sigma_{2v}(n,\psi)}{n^{u+v}}K_u(2\pi ny)K_v(2\pi ny)y\\
    C_0(y) &\coloneqq \sum_{n=1}^\infty \frac{N^{1+u+v}}{\tau(\bar\chi)\tau(\bar\psi)} \frac{8\sigma_{-2u}(n,\bar\chi)\sigma_{-2v}(n,\bar\psi)}{n^{-u-v}}K_u(2\pi ny)K_v(2\pi ny)y
  \end{align*}
\end{definition}
\noindent
We regard subscripts $i\infty$ and $0$ as cusps of $\Gamma_0(N)\backslash\HH$ because it will be convenient to write ``$C_\fa(y)$'' for an arbitrary cusp $\fa$.
\begin{lemma}\label{C_mellin} For any $s \in \C$ such that $\Re(s) > |\Re(u)| + |\Re(v)|$,
  \begin{align*}
    &\int_0^\infty C_{i\infty}(y) y^{s-1}\frac{dy}{y} = \frac{\Lambda(s+u+v)\Lambda(s-u+v,\chi)\Lambda(s+u-v,\psi)\Lambda(s-u-v,\chi\psi)}{N^{\frac{s-u-v}{2}}\Lambda(2s,\chi\psi)}\\
    &\int_0^\infty C_{0}(y) y^{s-1}\frac{dy}{y} = \frac{N}{\tau(\bar\chi)\tau(\bar\psi)}\frac{\Lambda(s-u-v)\Lambda(s+u-v,\bar\chi)\Lambda(s-u+v,\bar\psi)\Lambda(s+u+v,\overline{\chi\psi})}{N^{\frac{s-u-v}{2}}\Lambda(2s,\overline{\chi\psi})}.
  \end{align*}
\end{lemma}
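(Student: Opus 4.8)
The plan is to evaluate both Mellin transforms by the same three-move routine---interchange summation and integration, evaluate the resulting $K$-Bessel integrals in closed form, recognize the surviving Dirichlet series via Ramanujan's identity---and then repackage the archimedean factors into completed $L$-functions. I would first work in the region $\Re(s) > 1 + |\Re(u)| + |\Re(v)|$, where the double sum-integral converges absolutely. Absolute convergence is easy to certify: the bound $|K_w(x)| \le K_{\Re(w)}(x)$ for $x > 0$ reduces matters to the positive integrand $K_{\Re(u)}(2\pi ny)K_{\Re(v)}(2\pi ny)y^{\Re(s)}$, whose $y$-integral is finite by \cref{bessel_product_mellin} precisely when $\Re(s) > |\Re(u)|+|\Re(v)|$ and contributes a factor $n^{-\Re(s)}$; the remaining $n$-sum then converges by the standard bound $\sigma_{2u}(n,\chi) \ll n^{\max(2\Re(u),0)+\eps}$. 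Granting the interchange, I evaluate each term of $C_{i\infty}$ with \cref{bessel_product_mellin} taking $a = b = 2\pi n$. The key simplification is that $a = b$ makes $(b/a)^v = 1$ and sends the Gauss hypergeometric factor to $F(\cdot,\cdot;\cdot;0) = 1$, so each term collapses to $\frac{1}{8\pi^s n^s \Gamma(s)}\Gamma(\tfrac{s+u+v}{2})\Gamma(\tfrac{s+u-v}{2})\Gamma(\tfrac{s-u+v}{2})\Gamma(\tfrac{s-u-v}{2})$, with the $8$ cancelling the $8$ appearing in \cref{V_constant_term_def}.

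Summing over $n$, the $n^{-s}$ combines with the $n^{-u-v}$ prefactor to leave $\sum_n \sigma_{2u}(n,\chi)\sigma_{2v}(n,\psi) n^{-(s+u+v)}$, which \cref{ramanujan} (with $w = s+u+v$) turns into $\zeta(s+u+v)L(s-u+v,\chi)L(s+u-v,\psi)L(s-u-v,\chi\psi)/L(2s,\chi\psi)$. For $C_0$ the steps are identical except that the constant $N^{1+u+v}/(\tau(\bar\chi)\tau(\bar\psi))$ is carried along, the characters become $\bar\chi,\bar\psi$, the divisor indices become $-2u,-2v$, and the prefactor $n^{+u+v}$ leaves $\sum_n \sigma_{-2u}(n,\bar\chi)\sigma_{-2v}(n,\bar\psi) n^{-(s-u-v)}$, to which \cref{ramanujan} applies with $w = s-u-v$.

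The remaining work is bookkeeping: I would fold the factor $\pi^{-s}\Gamma(s)^{-1}$ and the four half-argument Gammas into completed $L$-functions using \eqref{completed_dirichlet_L_function_eqn}, recalling that $\chi$, $\psi$, and $\chi\psi$ are primitive mod the prime $N$ (so each carries a factor $N^{(\cdot)/2}$), whereas the zeta factor uses the character-free completion $\Lambda(w) = \pi^{-w/2}\Gamma(w/2)\zeta(w)$, which carries no $N$. The powers of $\pi$ and of $\Gamma$ match on the nose; the powers of $N$ arising from the three character-carrying $\Lambda$'s in the numerator, from $\Lambda(2s,\chi\psi)^{-1}$, and from the leading constant must be tallied and shown to collapse to the single factor $N^{-(s-u-v)/2}$ claimed (and to $N^{1+u+v}$ in the $C_0$ case). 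This step is the most error-prone, though entirely mechanical.

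The one genuine subtlety---and the main obstacle---is the region of validity. The manipulations above are rigorous only for $\Re(s) > 1 + |\Re(u)| + |\Re(v)|$, whereas the lemma asserts the identity for $\Re(s) > |\Re(u)| + |\Re(v)|$. The right-hand sides are meromorphic in $s$ and supply the analytic continuation; to finish I would argue that the Mellin integrals themselves continue to, and converge in, the larger half-plane, using that $C_\fa(y)$ is, up to the explicit leading power, the zeroth Fourier coefficient of the product of Eisenstein series $V$, which has moderate growth as $y \to 0$ and exponential decay as $y \to \infty$. It is worth noting that the continued right-hand side genuinely has a pole at $s = 1-u-v$ (from $\zeta$, equivalently from $\Lambda(s+u+v)$), which lies inside the stated region and is precisely the source of the first main term of \cref{maintheorem}.
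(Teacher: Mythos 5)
Your core computation is exactly the paper's: interchange sum and integral in the region of absolute convergence, evaluate each term with \cref{bessel_product_mellin} at $a=b=2\pi n$ (so that ${}_2F_1(\cdot,\cdot\,;\cdot\,;0)=1$ and each term collapses to a ratio of Gamma factors times $n^{-s}$), identify the surviving Dirichlet series by \cref{ramanujan} with argument $s+u+v$ (resp.\ $s-u-v$ for $C_0$), and fold the powers of $\pi$, $N$, and the Gamma factors into completed $L$-functions via \eqref{completed_dirichlet_L_function_eqn}. The convergence bookkeeping you give ($|K_w(x)|\le K_{\Re(w)}(x)$ and $\sigma_{2u}(n,\chi)\ll n^{\max(2\Re(u),0)+\eps}$, giving absolute convergence for $\Re(s)>1+|\Re(u)|+|\Re(v)|$) is correct and is more than the paper itself records.

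The one step of your plan that would fail is the last one, and you have already written down the reason: you propose to show the Mellin integrals ``continue to, and converge in'' the half-plane $\Re(s)>|\Re(u)|+|\Re(v)|$, but convergence there is impossible precisely because of the pole at $s=1-u-v$ that you note. A Mellin integral that converges on an open vertical strip is holomorphic there; since the right-hand side has a genuine pole at $s=1-u-v$ (equivalently, $C_{i\infty}(y)\sim c\,y^{u+v}$ as $y\to 0$ with $c$ the residue), the integral $\int_0^\infty C_{i\infty}(y)\,y^{s-1}\frac{dy}{y}$ in fact diverges for $\Re(s)\le 1-\Re(u+v)$, and likewise the $C_0$ integral diverges for $\Re(s)\le 1+\Re(u+v)$; both thresholds exceed $\tfrac12$, hence exceed $|\Re(u)|+|\Re(v)|$. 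So on the full stated region the identity can only be asserted, and proved, in the sense of meromorphic continuation of both sides --- which your computation in the region of absolute convergence already furnishes, since the right-hand side is manifestly meromorphic. This looseness is inherited from the statement of the lemma itself (the paper's own calculation is purely formal and never discusses the region, and the later use of these formulas at $\Re(s)=\tfrac12$ is routed through the regularized inner product of \cref{regularized_inner_product} rather than through a literal integral), so with the one-phrase repair --- ``continues meromorphically to'' in place of ``converges in'' --- your proof is complete.
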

\begin{lemma}\label{V_coeff_k_mellin} For any positive integer $k$ and any $s\in\C$ such that $\Re(s) > |\Re(u)| + |\Re(v)|$,
  \begin{align*}
    &\int_0^\infty \left(\int_0^1 V(z)\,e(-kx)\,dx\right)y^{s-1}\frac{dy}{y}\\
    &= \frac{\tau(\chi)}{\sqrt{N}}\Lambda(1+2u,\bar\chi)\frac{N^u\sigma_{2v}(k,\psi)}{2\pi^{s+u}k^{s+u+v}}\Gamma\!\left(\frac{s+u+v}{2}\right)\Gamma\!\left(\frac{s+u-v}{2}\right)\\
    &+ \frac{\tau(\psi)}{\sqrt{N}}\Lambda(1+2v,\bar\psi)\frac{N^v\sigma_{2u}(k,\chi)}{2\pi^{s+v}k^{s+u+v}}\Gamma\!\left(\frac{s+u+v}{2}\right)\Gamma\!\left(\frac{s-u+v}{2}\right)\\
    &+ \frac{\Gamma\!\left(\frac{s+u+v}{2}\right)\Gamma\!\left(\frac{s+u-v}{2}\right)\Gamma\!\left(\frac{s-u+v}{2}\right)\Gamma\!\left(\frac{s-u-v}{2}\right)}{2\pi^s\Gamma(s)}\\
    &\hspace{3cm}\cdot\sum_{n\neq 0,k}\frac{\sigma_{2u}(n,\chi)\sigma_{2v}(n-k,\psi)}{|n|^{s+u+v}} {}_2F_1\!\left(\frac{s+u+v}{2},\frac{s+u-v}{2} ;s\,; \frac{2k}{n} - \frac{k^2}{n^2}\right).
  \end{align*}
\end{lemma}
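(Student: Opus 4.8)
The plan is to take the explicit formula for the $k$-th Fourier coefficient $\int_0^1 V(z)e(-kx)\,dx$ supplied by \cref{V_coeffs}, multiply through by $y^{s-1}$, and integrate against $\frac{dy}{y}$, evaluating each of the three pieces separately. In a right half-plane where $\Re(s)$ is large enough that the combined series-and-integral converges absolutely, one may interchange the sum over $n$ with the $y$-integral and reduce everything to the two Mellin transforms recorded in \cref{bessel_mellin} and \cref{bessel_product_mellin}; the identity for all $\Re(s) > |\Re(u)| + |\Re(v)|$ will then follow by analytic continuation.

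For the first two terms, each a single $K$-Bessel function times a power of $y$, I would apply \cref{bessel_mellin}. For the first term, writing $y^{1+u}\cdot y^{s-1}\frac{dy}{y} = y^{s+u}\frac{dy}{y}$ and taking $a = 2\pi k$ with exponent $s+u$ gives $\int_0^\infty K_v(2\pi ky)y^{s+u}\frac{dy}{y} = 2^{s+u-2}(2\pi k)^{-(s+u)}\Gamma(\tfrac{s+u+v}{2})\Gamma(\tfrac{s+u-v}{2})$, in which the powers of $2$ collapse to $\tfrac{1}{4}$. Combining the surviving constants with the prefactor $\tfrac{N^{1/2+u}}{\tau(\bar\chi)}\Lambda(1+2u,\bar\chi)\cdot 2k^{-v}\sigma_{2v}(k,\psi)$, the only nonroutine simplification is the Gauss-sum identity $\tau(\chi)\tau(\bar\chi) = \chi(-1)N = N$, valid since $\chi$ is even and primitive mod the prime $N$; this rewrites $\tfrac{N^{1/2+u}}{\tau(\bar\chi)}$ as $\tfrac{\tau(\chi)}{\sqrt N}N^u$ and produces exactly the first displayed line. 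The second term is identical after exchanging the roles of $(u,\chi)$ and $(v,\psi)$.

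For the third term I would interchange the sum over $n \neq 0,k$ with the integral and apply \cref{bessel_product_mellin} to each summand with $a = 2\pi|n|$, $b = 2\pi|n-k|$, and exponent $s$. The factor $(b/a)^v$ cancels the $|n-k|^{-v}$ in the prefactor, the powers of $|n|$ assemble into $|n|^{-(s+u+v)}$, the constants again collapse (now $4\cdot 2^{s-3}/2^s = \tfrac{1}{2}$), and the hypergeometric argument becomes $1 - b^2/a^2 = \tfrac{2k}{n} - \tfrac{k^2}{n^2}$. Pulling the $n$-independent product of four Gamma factors together with $\tfrac{1}{2\pi^s\Gamma(s)}$ outside the sum then yields precisely the claimed third line.

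The main obstacle is justifying the interchange of summation and integration across the whole stated region rather than merely computing formally. Each individual Mellin transform converges for $\Re(s) > |\Re(u)| + |\Re(v)|$, but the resulting series $\sum_n \sigma_{2u}(n,\chi)\sigma_{2v}(n-k,\psi)|n|^{-(s+u+v)}\,{}_2F_1(\cdots)$ — whose hypergeometric factor tends to $1$ as $|n| \to \infty$ — converges absolutely only in a strictly smaller half-plane governed by the trivial bound $|\sigma_{2u}(n,\chi)| \ll n^{\max(0,2\Re(u))+\eps}$, for instance $\Re(s) > 1 + \Re(u) + \Re(v)$ when $\Re(u),\Re(v) \ge 0$. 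I would therefore carry out the termwise computation only there, where Fubini applies, and extend to all $\Re(s) > |\Re(u)| + |\Re(v)|$ by analytic continuation, using that the left-hand side is the Mellin transform of a Fourier coefficient that decays exponentially as $y \to \infty$ and grows at most polynomially as $y \to 0$, hence is holomorphic throughout that half-plane.
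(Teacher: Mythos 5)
Your route is the one the paper intends (it prints no proof of \cref{V_coeff_k_mellin}): Mellin-transform the three pieces of the $k$th Fourier coefficient from \cref{V_coeffs}, using \cref{bessel_mellin} for the two boundary terms and \cref{bessel_product_mellin} for the bilinear sum, and your handling of the first two terms --- including the identity $\tau(\chi)\tau(\bar\chi) = \chi(-1)N = N$ --- is correct.

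There is, however, a concrete mismatch in your third term that you assert away. Applying \cref{bessel_product_mellin} with $a = 2\pi|n|$ and $b = 2\pi|n-k|$ produces
\begin{align*}
  \frac{\Gamma\!\left(\frac{s+u+v}{2}\right)\Gamma\!\left(\frac{s+u-v}{2}\right)\Gamma\!\left(\frac{s-u+v}{2}\right)\Gamma\!\left(\frac{s-u-v}{2}\right)}{2\pi^s\Gamma(s)}
  \sum_{n\neq 0,k}\frac{\sigma_{2u}(n,\chi)\sigma_{2v}(n-k,\psi)}{|n|^{s+u+v}}\,
  {}_2F_1\!\left(\frac{s+u+v}{2},\frac{s-u+v}{2};s\,;\frac{2k}{n}-\frac{k^2}{n^2}\right),
\end{align*}
with second hypergeometric parameter $\frac{s-u+v}{2}$, whereas the lemma prints $\frac{s+u-v}{2}$. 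These are genuinely different functions: comparing the linear terms of the hypergeometric series, $(s+u+v)(s-u+v)/(4s) \neq (s+u+v)(s+u-v)/(4s)$ unless $u = v$. Nor does any standard transformation repair this while keeping the printed argument and weight: Pfaff's transformation shows that the printed parameter pair is what arises from the opposite choice $a = 2\pi|n-k|$, $b = 2\pi|n|$, but then the sum carries weight $|n-k|^{-(s+u+v)}$ and argument $1 - n^2/(n-k)^2$. So the statement as printed (and \cref{L_def}, where the same expression is copied) contains a sign typo; it is harmless downstream, since the later arguments use only that ${}_2F_1 = 1 + \cO(k/|n|)$ together with pole locations, but your claim that the computation lands ``precisely'' on the printed line is not true, and a careful write-up must either derive the $\frac{s-u+v}{2}$ version or note and correct the discrepancy.

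A smaller issue: the series on the right converges absolutely only for $\Re(s) > 1 + |\Re(u)| + |\Re(v)|$, so on the remainder of the strip down to $|\Re(u)|+|\Re(v)|$ the asserted identity requires both sides to be continued, and ``polynomial growth as $y \to 0$'' of unspecified degree does not by itself place the abscissa of the left-hand Mellin transform at $|\Re(u)|+|\Re(v)|$. Since the paper only invokes this lemma for $\Re(s) > 1 + |\Re(u)| + |\Re(v)|$ (cf.\ \cref{L_identity}), the clean fix is to prove and use the identity in that half-plane, where your Fubini argument is complete.
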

\noindent The shifted convolution $L$-series appearing in the last term is the object we're interested in studying.
\begin{definition}\label{L_def} For the same $\chi, \psi, u, v$ as the ones defining $V$ (see \cref{V_def}), and for any positive integer $k$, define
  \begin{align*}
    L_k(s) \coloneqq \sum_{n\neq 0,k}\frac{\sigma_{2u}(n,\chi)\sigma_{2v}(n-k,\psi)}{|n|^{s+u+v}} {}_2F_1\!\left(\frac{s+u+v}{2},\frac{s+u-v}{2} ;s\,; \frac{2k}{n} - \frac{k^2}{n^2}\right).
  \end{align*}
\end{definition}
\noindent
In the definition above ${}_2F_1$ is the Gaussian hypergeometric function.\\
\\
This type of construction forms the basis of a very well known and widely used approach for studying shifted convolutions. The specific construction that we're considering is particularly similar to work of Takhtadzhyan--Vinogradov and Jutila. We highlight the papers \cite{TV, jutila}, but those authors have written many papers on this topic. However, we remark that there is a small technical difference between the approach in \cite{TV, jutila} and the one here. Those papers access shifted convolutions by considering an inner product with the Poincar\'{e} series
\begin{align*}
  P_k(z,s) \coloneqq \sum_{\gamma\in\Gamma_\infty\backslash\Gamma_0(N)}e(k\gamma z)\text{Im}(\gamma z)^s,
\end{align*}
while we directly take the Mellin transform of a Fourier coefficient in a way that's very similar to what's done in \cite{goldfeld}. If $f(z) = \sum_n \alpha_n(y)e(nx)$ is an automorphic function, then
\begin{align*}
  &\left\langle f, P_k(\cdot,\bar s) \right\rangle = \int_0^\infty \alpha_k(y)e^{-2\pi ky}y^{s-1}\frac{dy}{y}\\
  \shortintertext{while}
  &\int_0^\infty\left(\int_0^1f(z)\,e(-kx)\,dx\right)y^{s-1}\frac{dy}{y} = \int_0^\infty \alpha_k(y)y^{s-1}\frac{dy}{y},
\end{align*}
i.e.\ we avoid a factor of $e^{-2\pi k y}$ in the Mellin transform. For applications to the additive divisor problem, this factor ends up being very close to $1$ in the relevant range, roughly speaking, and can be dealt with via analysis which is in principle routine but in practice quite tedious. The method we use here requires that we evaluate the Mellin transform of the function $K_u(ay)K_v(by)$ (given in \ref{bessel_product_mellin}), which is used frequently and typically relatively straightforward to handle. In contrast, taking an inner product with $P_k(z,s)$ requires one to evaluate the Mellin transform of $K_u(ay)K_v(by)e^{-cy}$, which seems substantially more mysterious and unwieldy. Sizable parts of the papers \cite{TV, jutila, JM} are devoted to handling this Mellin transform, and each uses a different approach. We note that a formula for this Mellin transform is given in \cite[3.14.19.1]{BMS} for generic $u,v$, and it can be manipulated further to obtain an expression for the case $u,v = 0$ that arises in the classical additive divisor problem. However, we haven't seen this formula used, and it's unclear to us how helpful it'd be.

\section{Automorphic regularization}
\noindent
We now look to express the Fourier coefficients of $V$ in a way that's distinct from the one in \ref{V_coeff_k_mellin}, since doing so will yield a nontrivial expressions for the shifted convolution $L_k$ (\cref{L_def}). Observe that $V$ is an automorphic function of level $N$ and character $\chi\psi$. If $V$ were in $L^2(\Gamma_0(N)\backslash\HH\,;\chi\psi)$, then we could produce an expression for its Fourier coefficients via spectral decomposition (see \cite[\S 2.1.2.1]{michel}, e.g.).
\begin{theorem}[Spectral decomposition]\label{spectral_decomposition} Suppose $\{f_j\}$ is a set of Maass forms which forms a complete orthogonal basis for the cuspidal subspace of $L^2(\Gamma_0(N)\backslash\HH\,;\chi\psi)$. Suppose that $U \in L^2(\Gamma_0(N)\backslash\HH\,;\chi\psi)$. Then
  \begin{align*}
    U(z) = \sum_j \frac{\left\langle U, f_j\right\rangle}{\langle f_j, f_j\rangle}f_j(z) + \frac{1}{4\pi i}\sum_\fa \int_{\left(\thalf\right)}\left\langle U, E_\fa(\cdot, \lambda, \chi\psi)\right\rangle E_\fa(z,\lambda,\chi\psi)\,d\lambda.
  \end{align*}
\end{theorem}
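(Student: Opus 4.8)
The plan is to invoke the standard Selberg--Langlands spectral theory for the Laplacian $\Delta$ acting on $L^2(\Gamma_0(N)\backslash\HH\,;\chi\psi)$; since this is a classical result (see \cite[\S 2.1.2.1]{michel}), I would only lay out the structural steps and indicate where the real analytic work sits. The first point to record is that, because $\chi\psi$ is nontrivial, \emph{no residual spectrum} appears: the constant function does not lie in $L^2(\Gamma_0(N)\backslash\HH\,;\chi\psi)$, and the Eisenstein series $E_\fa(\cdot,s,\chi\psi)$ have no poles in the half-plane $\Re(s)\geq\half$. Indeed, their only candidate poles come from zeros of the Dirichlet $L$-function in the denominators of the expansions in \cref{eisenstein_series_cusp_fourier_expansions}, and for a nontrivial character no such zeros occur in $\Re(s)\geq\half$. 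Hence only the cuspidal sum and the continuous Eisenstein integral can arise, which is exactly the shape of the claimed identity.

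For the cuspidal contribution I would define the cuspidal subspace $L^2_{\mathrm{cusp}}$ to consist of those functions whose constant Fourier coefficient vanishes at every cusp. The crucial analytic input is that $\Delta$ restricted to $L^2_{\mathrm{cusp}}$ has \emph{compact resolvent}: cuspidal functions decay exponentially high in each cusp (their Fourier expansions are built from $K$-Bessel functions, as in \cref{eisenstein_series_cusp_fourier_expansions} with the constant term deleted), and this decay together with a Rellich-type compactness criterion makes the inclusion of the relevant Sobolev space into $L^2_{\mathrm{cusp}}$ compact. Self-adjointness of $\Delta$ combined with a compact resolvent then yields a discrete spectrum and a complete orthogonal eigenbasis $\{f_j\}$ of Maass cusp forms, producing the first sum.

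The continuous part carries the real content. One shows that the orthogonal complement of $L^2_{\mathrm{cusp}}$ is spanned by the Eisenstein series $E_\fa(\cdot,\lambda,\chi\psi)$ along the line $\Re(\lambda)=\half$. The standard route is to test $U$ against incomplete Eisenstein series formed from compactly supported profiles: the inner products $\langle U, E_\fa(\cdot,\lambda,\chi\psi)\rangle$ are first defined for $\Re(\lambda)>1$, where the defining series converge absolutely, and then continued to the critical line using the meromorphic continuation and moderate-growth bounds of the Eisenstein series. Shifting the contour from $\Re(\lambda)>1$ down to $\Re(\lambda)=\half$, and invoking the Maass--Selberg relations to control the inner products of truncated Eisenstein series, produces both the normalizing factor $\frac{1}{4\pi i}$ and the absence of extra residual terms (again guaranteed by the nontriviality of $\chi\psi$). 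The resulting Plancherel identity for the Eisenstein transform gives the $L^2$-convergent expansion as stated.

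The main obstacle is precisely this last step. Justifying the contour shift to $\Re(\lambda)=\half$ and establishing the attendant Plancherel formula rests on the analytic continuation of the Eisenstein series beyond their region of absolute convergence, on polynomial growth estimates in the spectral parameter, and on the Maass--Selberg relations to confirm that the continuation crosses no poles. These are the deep inputs of Selberg--Langlands theory; in the present prime-level, nontrivial-nebentypus setting they are already supplied by the analyticity and functional-equation properties of the completed Eisenstein series recorded in \cref{eisenstein_section}, so no new analysis is needed beyond citing the general theory.
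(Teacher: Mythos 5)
Your proposal is correct and takes essentially the same approach as the paper: the paper offers no proof of \cref{spectral_decomposition} at all, citing it as standard Selberg--Langlands spectral theory (see \cite[\S 2.1.2.1]{michel}), which is precisely the classical result you invoke and outline. Your observation that the nontriviality of $\chi\psi$ rules out a residual term (so the decomposition contains only the cuspidal sum and the Eisenstein integral) is the correct reason the theorem can be stated in this form, and the rest of your sketch is the standard argument behind the cited result.
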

\noindent
However $V \not\in L^2(\Gamma_0(N)\backslash\HH\,;\chi\psi)$, and so we can't use \cref{spectral_decomposition} directly. Instead, our strategy will be to find an automorphic function $R$ such that $V - R \in L^2(\Gamma_0(N)\backslash\HH\,;\chi\psi)$, and such that we understand the Fourier coefficients of $R$ by other means. We call this the method of \textit{automorphic regularization}. In \cite{zagier}, Zagier discusses automorphic regularization for level $1$ and trivial character, and, in section 3, demonstrates the method using a product of Eisenstein series very similar to $V$. In \cite{templier}, Templier gives an explicit automorphic regularization for an analogue of $V$ with prime level and trivial character.\\
\\
We will find that, to regularize $V$, the following choice of $R$ will be suitable:
\begin{definition}\label{R_def} For the same $\chi, \psi, u, v$ as the ones defining $V$ (see \cref{V_def}), define
  \begin{align*}
    R(z) \coloneqq \frac{\tau(\overline{\chi\psi})}{\tau(\bar\chi)\tau(\bar\psi)}\frac{\Lambda(1+2u,\bar\chi)\Lambda(1+2v,\bar\psi)}{\Lambda(2+2u+2v,\overline{\chi\psi})} &E_{\mathbbm{1},\overline{\chi\psi}}^*(z,1+u+v)\\
    +\,\, \frac{\Lambda(1-2u,\chi)\Lambda(1-2v,\psi)}{\Lambda(2-2u-2v,\chi\psi)} &E_{\mathbbm{1},\overline{\chi\psi}}^*(z,u+v).
  \end{align*}
\end{definition}
\noindent
See \cref{eisenstein_section} for the definition of the Eisenstein series $E^*$ appearing above, and \eqref{completed_dirichlet_L_function_eqn} for the definition of the completed $L$-function $\Lambda$.
\begin{lemma}\label{R_coeffs}
  For any positive integer $k$,
  \begin{align*}
    \int_0^1 R(z)\,e(-kx)\,dx
    = \frac{\tau(\overline{\chi\psi})}{\tau(\bar\chi)\tau(\bar\psi)} &\frac{\Lambda(1+2u,\bar\chi)\Lambda(1+2v,\bar\psi)}{\Lambda(2+2u+2v,\overline{\chi\psi})}\cdot 2k^{-u-v-\half} \sigma_{2u+2v+1}(k,\chi\psi) K_{u+v+\half}(2\pi ky) y^{\half}\\
    +\,\, &\frac{\Lambda(1-2u,\chi)\Lambda(1-2v,\psi)}{\Lambda(2-2u-2v,\chi\psi)}\cdot 2k^{-u-v+\half}\sigma_{2u+2v-1}(k,\chi\psi) K_{u+v-\half}(2\pi ky) y^{\half}.
  \end{align*}
\end{lemma}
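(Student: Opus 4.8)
The plan is to compute the $k$-th Fourier coefficient of $R$ directly from its definition, using only linearity of the Fourier-coefficient functional together with the known Fourier expansion of the completed Eisenstein series recorded in \cref{eisenstein_series_characters_fourier_expansions}. By \cref{R_def}, $R$ is a fixed $\C$-linear combination of the two Eisenstein series $E_{\mathbbm{1},\overline{\chi\psi}}^*(z,1+u+v)$ and $E_{\mathbbm{1},\overline{\chi\psi}}^*(z,u+v)$, and the operation $f \mapsto \int_0^1 f(z)\,e(-kx)\,dx$ is linear in $f$. Hence it suffices to read off the $k$-th Fourier coefficient of a single Eisenstein series $E_{\mathbbm{1},\overline{\chi\psi}}^*(z,s)$ at general $s$ and then specialize to the two values of $s$ appearing in $R$.

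Before invoking \cref{eisenstein_series_characters_fourier_expansions}, I would verify its hypotheses with the character $\chi\psi$ playing the role of $\psi$: the product $\chi\psi$ is even since $\chi$ and $\psi$ are both even, and it is a primitive character mod $N$ because it is assumed nontrivial and $N$ is prime, so that every nontrivial character modulo $N$ is automatically primitive. Applying \cref{eisenstein_series_characters_fourier_expansions} with $\psi$ replaced by $\chi\psi$ then gives an expansion whose constant term is proportional to $y^s$ and whose $n$-th coefficient is $2|n|^{\half-s}\sigma_{2s-1}(n,\chi\psi)\,y^{\half}K_{s-\half}(2\pi|n|y)$. Integrating against $e(-kx)$ over $x\in[0,1]$ annihilates the constant term (as $k\neq 0$) and, by orthogonality of the additive characters, isolates the $n=k$ term. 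Since $k>0$, this yields
\[
  \int_0^1 E_{\mathbbm{1},\overline{\chi\psi}}^*(z,s)\,e(-kx)\,dx = 2k^{\half-s}\sigma_{2s-1}(k,\chi\psi)\,y^{\half}K_{s-\half}(2\pi ky).
\]

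Finally I would substitute the two relevant values of $s$. Taking $s=1+u+v$ produces the exponent $k^{\half-s}=k^{-u-v-\half}$, the Bessel order $K_{s-\half}=K_{u+v+\half}$, and the divisor index $\sigma_{2s-1}=\sigma_{2u+2v+1}$; taking $s=u+v$ produces $k^{-u-v+\half}$, the order $K_{u+v-\half}$, and the index $\sigma_{2u+2v-1}$. Multiplying these two specializations by the scalar prefactors displayed in \cref{R_def} and adding gives precisely the formula asserted in \cref{R_coeffs}. There is no genuine analytic obstacle in this argument; the only point requiring care is the bookkeeping check that the prefactors, the shift in the Bessel order, and the subscript of each divisor function all line up correctly after the substitutions $s\mapsto 1+u+v$ and $s\mapsto u+v$, together with the preliminary observation that $\chi\psi$ is primitive and even so that \cref{eisenstein_series_characters_fourier_expansions} is applicable in the first place.
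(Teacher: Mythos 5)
Your proposal is correct and is precisely the argument the paper leaves implicit: linearity of the Fourier-coefficient functional applied to \cref{R_def}, the expansion of \cref{eisenstein_series_characters_fourier_expansions} with $\chi\psi$ in place of $\bar\psi$'s character (noting $\chi\psi$ is even and, being nontrivial mod a prime, primitive), and specialization at $s=1+u+v$ and $s=u+v$. All exponents, Bessel orders, divisor indices, and prefactors check out against the stated formula.
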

\begin{lemma}\label{R_coeff_k_mellin} For any positive integer $k$ and any $s \in \C$ such that $\Re(s) > 1 + |\Re(u+v)|$,
  \begin{align*}
    \int_0^\infty\left(\int_0^1 R(z)\,e(-kx)\,dx\right)y^{s-1}\frac{dy}{y}&\\
    = \frac{\tau(\overline{\chi\psi})}{\tau(\bar\chi)\tau(\bar\psi)} &\frac{\Lambda(1+2u,\bar\chi)\Lambda(1+2v,\bar\psi)}{\Lambda(2+2u+2v,\overline{\chi\psi})} \frac{\sigma_{2u+2v+1}(k,\chi\psi)}{2\pi^{s-\half}k^{s+u+v}} \Gamma\!\left(\frac{s+u+v}{2}\right)\Gamma\!\left(\frac{s-1-u-v}{2}\right)\\
    +\,\, &\frac{\Lambda(1-2u,\chi)\Lambda(1-2v,\psi)}{\Lambda(2-2u-2v,\chi\psi)} \frac{\sigma_{2u+2v-1}(k,\chi\psi)}{2\pi^{s-\half}k^{s-1+u+v}} \Gamma\!\left(\frac{s-u-v}{2}\right)\Gamma\!\left(\frac{s-1+u+v}{2}\right).
  \end{align*}
\end{lemma}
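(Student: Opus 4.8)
The plan is to obtain this identity by applying the Mellin transform directly to the closed-form expression for the $k$-th Fourier coefficient already recorded in \cref{R_coeffs}. That lemma writes $\int_0^1 R(z)\,e(-kx)\,dx$ as a sum of exactly two terms, each of the shape $c_\pm\cdot y^{\half}K_{u+v\pm\half}(2\pi k y)$, where $c_+$ and $c_-$ are the two explicit $y$-independent coefficients built from ratios of completed $L$-functions, Gauss sums, and the divisor factor $\sigma_{2u+2v\pm1}(k,\chi\psi)$. Since there is no infinite summation to control, I would transform the two terms separately and add the results.

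For each term, pairing $y^{s-1}\tfrac{dy}{y}$ against $y^{\half}K_{u+v\pm\half}(2\pi k y)$ produces $\int_0^\infty K_{u+v\pm\half}(2\pi k y)\,y^{s-\half}\tfrac{dy}{y}$. This is exactly the integral evaluated in \cref{bessel_mellin}, applied with $a = 2\pi k$, Mellin exponent $s-\half$, and index $\mu = u+v\pm\half$. The two gamma arguments $\tfrac{(s-\half)\pm\mu}{2}$ then collapse: for the $+$ term they become $\tfrac{s+u+v}{2}$ and $\tfrac{s-1-u-v}{2}$, and for the $-$ term they become $\tfrac{s-1+u+v}{2}$ and $\tfrac{s-u-v}{2}$, matching the two gamma products in the statement.

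The only genuine bookkeeping is the collection of the elementary prefactors. The evaluation in \cref{bessel_mellin} contributes $2^{(s-\half)-2}(2\pi k)^{-(s-\half)}$, which, combined with the factor $2k^{-u-v\mp\half}$ sitting inside $c_\pm$ and the splitting $(2\pi)^{-(s-\half)} = 2^{-(s-\half)}\pi^{-(s-\half)}$, simplifies so that all powers of $2$ cancel down to $\tfrac{1}{2}$ and the powers of $k$ consolidate to $k^{-(s+u+v)}$ for the $+$ term and $k^{-(s-1+u+v)}$ for the $-$ term. This reproduces the $\tfrac{1}{2\pi^{s-\half}}$ normalization and the stated powers of $k$.

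Finally, I would record the region of validity, which I expect to be the only place requiring a small amount of care rather than pure computation. \cref{bessel_mellin} requires the Mellin exponent to exceed the absolute value of the real part of the index, i.e.\ $\Re(s)-\half > |\Re(u+v\pm\half)|$. Because $|\Re(u)|+|\Re(v)| < \thalf$ forces $|\Re(u+v)| < \thalf$, the index $u+v+\half$ has real part in $(0,1)$ and the index $u+v-\half$ has real part in $(-1,0)$; unwinding the two absolute values shows that both conditions are equivalent to $\Re(s) > 1 + |\Re(u+v)|$, exactly the hypothesis of the lemma. Since the Fourier coefficient is already a finite sum of Bessel terms, no interchange of a limit with the integral needs justifying, and no further analytic input is required.
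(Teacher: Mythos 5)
Your proposal is correct and takes exactly the route the paper intends: \cref{R_coeff_k_mellin} is stated without explicit proof precisely because it follows by applying \cref{bessel_mellin} (with $a = 2\pi k$, Mellin exponent $s-\half$, and index $u+v\pm\half$) termwise to the two Bessel terms of \cref{R_coeffs}. Your prefactor bookkeeping checks out, and your derivation of the validity region $\Re(s) > 1 + |\Re(u+v)|$ as the conjunction of the two constraints $\Re(s) > 1 \pm \Re(u+v)$ is the right justification for the hypothesis in the statement.
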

\begin{lemma}\label{V_regularization}
  \begin{align*}
    V - R \in L^2(\Gamma_0(N)\backslash\HH\,;\chi\psi).
  \end{align*}
\end{lemma}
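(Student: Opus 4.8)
The plan is to reduce square-integrability to the behaviour of the constant terms of $V-R$ at the two cusps, and then to check that the coefficients chosen in \cref{R_def} cancel exactly the non-square-integrable growth of $V$. First I would note that $V$ and $R$ are assembled from the completed Eisenstein series $E^*_{\chi_1,\chi_2}$, which are entire in $s$ whenever $q_1q_2 = N > 1$; since the arguments $\thalf+u,\ \thalf+v,\ 1+u+v,\ u+v$ are never poles and the normalizing $\Lambda$-factors in the denominators of \cref{R_def} are nonzero (their arguments have real part exceeding $1$, where the $L$-functions converge absolutely and the $\Gamma$-factors are finite and nonvanishing), both $V$ and $R$ are smooth. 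Hence $V-R$ is continuous on $\Gamma_0(N)\backslash\HH$ and automatically square-integrable over any compact part of the fundamental domain. Because $N$ is prime there are exactly two cusps, $i\infty$ and $0$, so the whole question is the growth of $V-R$ as $y\to\infty$ in the cuspidal neighbourhoods attached to $\sigma_{i\infty}$ and $\sigma_0$. Expanding in a Fourier series at a cusp and applying Parseval, the non-constant modes of $V$ and of $R$ are (sums of products of) $K$-Bessel functions, which decay exponentially and are therefore square-integrable near each cusp; so $V-R\in L^2$ precisely when, at each cusp, every power term $y^\alpha$ in the constant term with $\Re(\alpha)\geq\thalf$ is cancelled, leaving only terms with $\Re(\alpha)<\thalf$, for which $\int^\infty y^{2\Re(\alpha)-2}\,dy$ converges.

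At the cusp $i\infty$ I would read off from \cref{V_coeffs} that the constant term of $V$ is the single power term $\tfrac{N^{1+u+v}}{\tau(\bar\chi)\tau(\bar\psi)}\Lambda(1+2u,\bar\chi)\Lambda(1+2v,\bar\psi)\,y^{1+u+v}$ plus the exponentially decaying $C_{i\infty}(y)$. Since $\Re(u+v)>-\thalf$ we have $\Re(1+u+v)>\thalf$, so this is the only obstruction. By \cref{eisenstein_series_characters_fourier_expansions}, the constant term of $E^*_{\mathbbm{1},\overline{\chi\psi}}(z,s)$ at $i\infty$ is $\tfrac{N^s}{\tau(\overline{\chi\psi})}\Lambda(2s,\overline{\chi\psi})\,y^s$. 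Substituting $s=1+u+v$ into the first summand of $R$, the factors $\tau(\overline{\chi\psi})$ and $\Lambda(2+2u+2v,\overline{\chi\psi})$ cancel against the prefactor and reproduce exactly the $y^{1+u+v}$ term above, while the second summand ($s=u+v$) contributes only a $y^{u+v}$ term, which is harmless as $\Re(u+v)<\thalf$. Thus the obstruction cancels in the constant term of $V-R$ at $i\infty$.

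At the cusp $0$ I would pass to $\sigma_0 z$. By \cref{V_coeffs}, the constant term of $V(\sigma_0 z)$ is $\Lambda(1-2u,\chi)\Lambda(1-2v,\psi)\,y^{1-u-v}$ plus the exponentially decaying $C_0(y)$, and $\Re(1-u-v)>\thalf$ is the obstruction. For $R$ I would use the Fricke action recorded in \cref{eisenstein_section}, $E^*_{\mathbbm{1},\overline{\chi\psi}}(\sigma_0 z,s)=\tfrac{N^s}{\tau(\overline{\chi\psi})}E^*_{\overline{\chi\psi},\mathbbm{1}}(z,s)$, whose constant term at $i\infty$ is $\tfrac{N}{\tau(\overline{\chi\psi})\tau(\chi\psi)}\Lambda(2-2s,\chi\psi)\,y^{1-s}$ by \cref{eisenstein_series_characters_fourier_expansions}. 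Substituting $s=u+v$ into the second summand of $R$, the factor $\Lambda(2-2u-2v,\chi\psi)$ cancels and one is left with $\tfrac{N}{\tau(\chi\psi)\tau(\overline{\chi\psi})}\Lambda(1-2u,\chi)\Lambda(1-2v,\psi)\,y^{1-u-v}$. The exact cancellation against $V$ therefore hinges on the Gauss sum identity $\tau(\chi\psi)\tau(\overline{\chi\psi})=N$, which holds because $\chi\psi$ is an even primitive character modulo $N$: then $|\tau(\chi\psi)|^2=N$ and $\overline{\tau(\chi\psi)}=\chi\psi(-1)\tau(\overline{\chi\psi})=\tau(\overline{\chi\psi})$, whence $\tau(\chi\psi)\tau(\overline{\chi\psi})=N$. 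The remaining summand of $R$ ($s=1+u+v$) contributes a $y^{-u-v}$ term with $\Re(-u-v)<\thalf$, which is harmless.

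Combining the two cusps, the constant term of $V-R$ at each cusp consists only of power terms $y^\alpha$ with $\Re(\alpha)<\thalf$ together with exponentially small pieces, while all non-constant modes decay exponentially; with continuity on the compact part this gives $V-R\in L^2(\Gamma_0(N)\backslash\HH\,;\chi\psi)$. I expect the main obstacle to be the cusp-$0$ computation: one must combine the Fricke involution with the expansion of $E^*_{\overline{\chi\psi},\mathbbm{1}}$ and correctly isolate the prefactor $\tfrac{N}{\tau(\chi\psi)\tau(\overline{\chi\psi})}$, so that the evenness of $\chi\psi$ is exactly what forces it to equal $1$. This is the point at which the coefficients of \cref{R_def} are pinned down, and it constitutes a direct verification of the square-integrability that the more structural approach is designed to guarantee.
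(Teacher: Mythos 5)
Your proof is correct and follows essentially the same route as the paper's: both reduce square-integrability to the growth of $V-R$ in the cuspidal zones at $i\infty$ and $0$, where the leading $y^{1+u+v}$ and $y^{1-u-v}$ terms cancel against the two summands of $R$ and everything remaining is either a harmless power $y^\alpha$ with $|\Re(\alpha)| < \thalf$ or exponentially decaying. The paper simply asserts the resulting bound $V-R \ll y^{|\Re(u+v)|}$ at both cusps (its equation \eqref{growth_eqn}) and devotes its attention to the fundamental-domain geometry, whereas you make explicit the computation hiding behind that assertion, in particular the cusp-$0$ cancellation forced by $\tau(\chi\psi)\tau(\overline{\chi\psi}) = N$ for the even primitive character $\chi\psi$.
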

\begin{proof}
  From the expressions \ref{V_coeffs} and \ref{eisenstein_series_characters_fourier_expansions}, and the fact that $K$-bessel functions decay exponentially \cite[8.451.6]{GR}, we see that, as $y \to \infty$,
  \begin{align}\label{growth_eqn}
    V(z) - R(z) \ll y^{|\Re(u+v)|} \quad\quad\text{and}\quad\quad V(\sigma_0^{-1} z) - R(\sigma_0^{-1} z) \ll y^{|\Re(u+v)|}.
  \end{align}
  The $L^2$ norm of $V - R$ is given by the integral
  \begin{align}\label{L2_norm_eqn}
    \int_{\Gamma_0(N)\backslash\HH}\left|V(z) - R(z)\right|^2 \frac{dxdy}{y^2}.
  \end{align}
  One can take a fundamental domain for $\Gamma_0(N)\backslash\HH$ which consists of the union of translates of the standard full level fundamental domain $\{z\in\HH\,:\, |x| \leq \thalf, |z| \geq 1\}$ by the coset representatives of $\Gamma_0(N)$ in $\text{SL}_2(\Z)$. See \cite[\S 3]{zagier2} for an explicit construction in the case of prime level, and \cite[\S 2.4]{iwaniec_topics} for discussion of general level. For prime $N$, there are $N+1$ such translates, $1$ of which touches the cusp $i\infty$ and has $y$ bounded away from $0$, and $N$ of which touch the cusp $0$ and have $y$ bounded away from $i\infty$ \cite[(2.29)]{iwaniec_topics}. In each of these translates, we make the change of variables $z \mapsto \sigma_\fa^{-1} z$, where $\fa$ is the cusp the translate contains. Recall that it was assumed in the definition \ref{V_def} of $V$ that $|\Re(u+v)| < \half$. It then follows from (\ref{growth_eqn}) and the preceding observations about the geometry of $\Gamma_0(N)\backslash\HH$ that the quantity (\ref{L2_norm_eqn}) is finite.
\end{proof}
\noindent
One surprising feature of our automorphic regularization is the following:
\begin{lemma}\label{regularized_inner_product} For any $s \in \C$ satisfying $|\Re(u+v)| < \Re(s) < 1 - |\Re(u+v)|$ as well as $\Lambda(2s,\chi\psi) \neq 0$ and $\Lambda(2s,\overline{\chi\psi}) \neq 0$, we have
  \begin{align*}
    \left\langle V - R, E_\fa(\cdot, \bar s, \chi\psi) \right\rangle = \int_0^\infty C_\fa(y)y^{s-1}\frac{dy}{y},
  \end{align*}
  where $C_\fa(y)$ is the part of the constant term of $V$ defined in \ref{V_constant_term_def}.
\end{lemma}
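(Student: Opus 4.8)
The plan is to evaluate the pairing by the Rankin--Selberg unfolding method, reducing it to a Mellin transform of the constant term of $V-R$ at the cusp $\fa$, and then to show that the residual powers of $y$ introduced by the regularization contribute nothing.

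First I would unfold. Set $\Phi \coloneqq V - R$, which is automorphic of level $N$ and nebentypus $\chi\psi$, the common nebentypus of the Eisenstein series $E_{\mathbbm{1},\overline{\chi\psi}}^*$ making up $R$. Substituting the defining sum for $E_\fa(z,\bar s,\chi\psi)$ into $\langle \Phi, E_\fa(\cdot,\bar s,\chi\psi)\rangle = \int_{\Gamma_0(N)\backslash\HH}\Phi(z)\overline{E_\fa(z,\bar s,\chi\psi)}\,\frac{dx\,dy}{y^2}$, the nebentypus factors of $\Phi$ and of the Eisenstein series cancel, the coset sum over $\Gamma_\fa\backslash\Gamma_0(N)$ unfolds the domain to $\Gamma_\fa\backslash\HH$, and applying the scaling matrix $\sigma_\fa$ followed by integration in $x$ yields
\begin{align*}
  \langle \Phi, E_\fa(\cdot,\bar s,\chi\psi)\rangle = \int_0^\infty \phi_\fa(y)\,y^{s-1}\frac{dy}{y}, \qquad \phi_\fa(y) \coloneqq \int_0^1 \Phi(\sigma_\fa z)\,dx.
\end{align*}
The one subtlety is convergence: $\Phi$ has moderate (power) growth rather than rapid decay, so this unfolding is legitimate only once the power-growth part of $\phi_\fa$ is handled separately, and the strip $|\Re(u+v)| < \Re(s) < 1 - |\Re(u+v)|$ is precisely what renders the pairing convergent. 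The hypotheses $\Lambda(2s,\chi\psi)\neq 0$ and $\Lambda(2s,\overline{\chi\psi})\neq 0$ guarantee that $E_\fa(\cdot,\bar s,\chi\psi)$ is a genuine, pole-free function, cf.\ \cref{eisenstein_series_cusp_fourier_expansions}.

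Second I would compute $\phi_\fa$ explicitly for $\fa = i\infty$ and $\fa = 0$. By \cref{V_coeffs} the constant term of $V$ is a single power, $\propto y^{1+u+v}$, plus $C_{i\infty}(y)$ at $i\infty$, and a single power $\propto y^{1-u-v}$ plus $C_0(y)$ at $0$. By \cref{eisenstein_series_characters_fourier_expansions} the constant term of $E_{\mathbbm{1},\overline{\chi\psi}}^*(z,w)$ at $i\infty$ is $\frac{N^{w}}{\tau(\overline{\chi\psi})}\Lambda(2w,\overline{\chi\psi})\,y^{w}$, and its constant term at $0$ is obtained from the Fricke relation $E_{\mathbbm{1},\overline{\chi\psi}}^*(\sigma_0 z,w) = \frac{N^{w}}{\tau(\overline{\chi\psi})}E_{\overline{\chi\psi},\mathbbm{1}}^*(z,w)$ together with \cref{eisenstein_series_characters_fourier_expansions}. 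A direct check then shows that the two Eisenstein series in \cref{R_def} are tuned so that the $w=1+u+v$ summand of $R$ cancels the $y^{1+u+v}$ term of $V$ at $i\infty$, while the $w=u+v$ summand of $R$ cancels the $y^{1-u-v}$ term of $V$ at $0$; the latter cancellation uses the Gauss-sum identity $\tau(\chi\psi)\tau(\overline{\chi\psi}) = N$, which holds because $\chi\psi$ is even. What survives is exactly one residual pure power at each cusp,
\begin{align*}
  \phi_{i\infty}(y) = C_{i\infty}(y) - c_{i\infty}\,y^{u+v}, \qquad \phi_0(y) = C_0(y) - c_0\,y^{-u-v},
\end{align*}
where $c_\fa$ is the explicit constant coming from the ``other'' summand of $R$ at that cusp.

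The main obstacle, which is the surprising content of the lemma, is to show that these residual powers contribute nothing, i.e.\ that $\int_0^\infty \phi_\fa(y)\,y^{s-1}\frac{dy}{y} = \int_0^\infty C_\fa(y)\,y^{s-1}\frac{dy}{y}$. The mechanism is that the regularized Mellin transform of a pure power vanishes: splitting $\int_0^\infty y^{\alpha}\,y^{s-1}\frac{dy}{y}$ as $\int_0^1 + \int_1^\infty$, the two halves converge in complementary half-planes to values that are negatives of one another and hence sum to zero under analytic continuation. To make this rigorous for the honest $L^2$ pairing I would truncate the fundamental domain at height $T$ in each cusp, perform the unfolding on the truncated domain (where every interchange is justified by absolute convergence), and let $T\to\infty$: the rapidly decaying part $C_\fa(y)$ reproduces $\int_0^\infty C_\fa(y)\,y^{s-1}\frac{dy}{y}$ in the limit, while the contributions of the residual powers $c_\fa y^{\alpha_\fa}$ organize into boundary terms at height $T$ that cancel in the limit, exactly as in the Maass--Selberg relation. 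Since $\Phi\in L^2$ by \cref{V_regularization}, the truncated integrals do converge to $\langle \Phi, E_\fa(\cdot,\bar s,\chi\psi)\rangle$, and comparison with \cref{C_mellin}---which evaluates $\int_0^\infty C_\fa(y)\,y^{s-1}\frac{dy}{y}$ as an explicit ratio of completed $L$-functions and provides the meromorphic continuation across the strip---finishes the proof. I expect the careful bookkeeping of these boundary terms, showing that the $c_\fa$-contributions genuinely cancel rather than merely cancel formally, to be the delicate step, and the one that forces the precise width $|\Re(u+v)| < \Re(s) < 1 - |\Re(u+v)|$ of the strip.
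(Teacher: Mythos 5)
Your constant-term bookkeeping (your second paragraph) is correct and matches the design of $R$: by \cref{V_coeffs}, \cref{eisenstein_series_characters_fourier_expansions}, the Fricke relation, and $\tau(\chi\psi)\tau(\overline{\chi\psi}) = N$, the $w = 1+u+v$ piece of $R$ cancels the $y^{1+u+v}$ term of $V$ at $i\infty$, the $w = u+v$ piece cancels the $y^{1-u-v}$ term at $0$, and the constant terms of $V-R$ are $C_{i\infty}(y) - c_{i\infty}y^{u+v}$ and $C_0(y) - c_0 y^{-u-v}$. Be aware, though, that the paper does not argue this way at all: its proof is a one-line citation of \cite{HKLDW}, Proposition A.3, together with the remark that the scattering matrix has poles exactly at the zeros of $\Lambda(2s,\chi\psi)$ and $\Lambda(2s,\overline{\chi\psi})$. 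What you are attempting is a from-scratch version of that proposition, which is legitimate, but it has a gap at the analytic core.

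The gap is the unfolding itself. Unfolding, whether over the full quotient or over the truncated domain $\mathcal{F}^T$, means substituting the defining coset sum of $E_\fa(z,\bar s,\chi\psi)$ and interchanging sum and integral, and that sum diverges pointwise at every $z$ once $\Re(s)\le 1$; truncation does not help, since compactness of $\mathcal{F}^T$ makes the \emph{integral} finite for every $s$ but does nothing for the Eisenstein sum. Meanwhile the honest pairing $\langle V-R, E_\fa(\cdot,\bar s,\chi\psi)\rangle$ converges only in the strip $|\Re(u+v)| < \Re(s) < 1-|\Re(u+v)|$, which is disjoint from the region $\Re(s)>1$ of convergence of the sum. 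So there is no value of $s$ at which your interchanges are ``justified by absolute convergence''; this disjointness is precisely what makes the lemma surprising. The missing idea is an analytic continuation in $s$ \emph{before} the limit in $T$: (i) unfold the truncated pairing for $\Re(s)$ large, where it legitimately yields $\int_0^T\bigl(C_\fa(y) - c_\fa y^{\alpha_\fa}\bigr)y^{s-2}\,dy$ plus scattering terms $-\sum_\fb \varphi_{\fa\fb}(s)\int_T^\infty\bigl(C_\fb(y) - c_\fb y^{\alpha_\fb}\bigr)y^{-1-s}\,dy$ plus exponentially small errors; (ii) note both sides are meromorphic in $s$ (the truncated integral is analytic wherever $E_\fa$ is), so the identity continues into the strip; (iii) only then let $T\to\infty$, where the terms $T^{\alpha_\fb+s-1}$ and $T^{\alpha_\fb-s}$ vanish exactly because $|\Re(u+v)| < \Re(s) < 1-|\Re(u+v)|$, and the scattering terms vanish provided $\varphi_{\fa\fb}(s)$ is finite --- which is where the hypotheses $\Lambda(2s,\chi\psi)\neq 0$ and $\Lambda(2s,\overline{\chi\psi})\neq 0$ actually enter, not merely to keep $E_\fa$ pole-free. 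Two further defects of the write-up: your accounting tracks only the residual-power boundary terms and omits the scattering terms entirely, and your first display, $\langle V-R, E_\fa(\cdot,\bar s,\chi\psi)\rangle = \int_0^\infty\phi_\fa(y)\,y^{s-1}\frac{dy}{y}$ with $\phi_\fa$ the full constant term, cannot stand even as an intermediate claim, since its right-hand side contains the Mellin transform of a pure power and therefore diverges for every $s$.
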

\begin{proof}
  This is proposition A.3 of \cite{HKLDW}. In our case the scattering matrix has a pole exactly when $\Lambda(2s,\chi\psi) = 0$ or $\Lambda(2s,\overline{\chi\psi}) = 0$.
\end{proof}
\noindent
We call the result \cref{regularized_inner_product} surprising because the constant term of the difference $V(z) - R(z)$ has a term proportional to $y^{u+v}$, coming from the Eisenstein series in $R(z)$, and one might expect that term to cause the inner product in \ref{regularized_inner_product} to diverge. As remarked in \cite[\S 3]{zagier}, \ref{regularized_inner_product} can be viewed as a generalization of the Maass-Selberg relations \cite[Prop. 6.8]{iwaniec}. Moreover, it is a general phenomenon that, when regularizing in this way, terms of the form $y^\alpha$ with $\Re(\alpha) < \thalf$ do not contribute to the inner products that appear in spectral expansions; see for example \cite[\S 4]{MV}, \cite[\S 2]{wu}, and \cite[\S 8]{nelson}.\\
\\
We can now proceed with our study of the shifted convolution $L_k$ by computing the spectral decomposition of $V - R$. Ultimately what we will find is that the behaviour of $L_k$ is controlled primarily by the regularizing function $R$.

\section{Spectral decomposition}
\noindent
In this section we will use the spectral decomposition
\begin{align*}
  V(z) - R(z) = \sum_j \frac{\langle V - R, f_j\rangle}{\langle f_j, f_j\rangle}f_j(z) + \frac{1}{4\pi i}\sum_\fa \int_{\left(\thalf\right)}\langle V - R, E_\fa(\cdot, \lambda, \chi\psi)\rangle E_\fa(z,\lambda,\chi\psi)\,d\lambda
\end{align*}
to write the Fourier coefficients of $V$ in a way that's different from the one given in \ref{V_coeffs}. We'll do this by evaluating the inner product in the spectral decomposition above, and then referencing the known Fourier expansions of the functions $f_j$ and $E_\fa$. Producing two distinct expressions for the $k^\tth$ Fourier coefficient of $V$ will lead to nontrivial identities for the shifted convolution $L_k$ (\cref{L_def}) that appeared when taking the Mellin transform in \ref{V_coeff_k_mellin}.\\
\\
We first determine the contribution from the continuous spectrum.
\begin{lemma}\label{continuous_spectrum_contribution} For any positive integer $k$ and any $s \in \C$ with $\Re(s) > \half$,
  \begin{align*}
    &\int_0^\infty \int_0^1 \left[\frac{1}{4\pi i}\sum_\fa \int_{\left(\thalf\right)}\langle V - R, E_\fa(\cdot, \lambda, \chi\psi)\rangle E_\fa(z,\lambda,\chi\psi)\,d\lambda \right] e(-kx)y^{s-1}\frac{dxdy}{y}\\
    &= \int_{-\infty}^\infty \frac{\Lambda(\half + i\omega + u + v)\Lambda(\half + i\omega - u + v,\chi)\Lambda(\half + i\omega + u - v,\psi)\Lambda(\half + i\omega - u - v,\chi\psi)}{\Lambda(1+2i\omega,\chi\psi)\Lambda(1-2i\omega,\overline{\chi\psi})}\\
    &\hspace{2cm}\cdot\frac{\tau(\overline{\chi\psi})}{\sqrt{N}}\frac{\sigma_{-2i\omega}(k,\chi\psi)}{4N^{\frac{\half-i\omega-u-v}{2}} \pi^{s+\half} k^{s-\half - i\omega}} \Gamma\!\left(\frac{s - \half + i\omega}{2}\right)\Gamma\!\left(\frac{s - \half - i\omega}{2}\right) d\omega.
  \end{align*}
\end{lemma}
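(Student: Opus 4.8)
The plan is to move the operator $\int_0^\infty\int_0^1(\,\cdot\,)\,e(-kx)\,y^{s-1}\,\frac{dxdy}{y}$ inside the spectral integral, so that it acts only on $E_\fa(z,\lambda,\chi\psi)$ and extracts the Mellin transform of its $k^{\text{th}}$ Fourier coefficient. First I would justify the interchange: $V-R\in L^2$ by \cref{V_regularization}, the $k^{\text{th}}$ Fourier coefficients of $E_\fa$ decay exponentially in $y$ through the $K$-Bessel asymptotics, and (as one sees from the explicit quotients in \cref{C_mellin}) the inner products $\langle V-R,E_\fa(\cdot,\lambda,\chi\psi)\rangle$ decay exponentially in the spectral parameter, so Fubini applies once $\Re(s)>\thalf$ — this being exactly the condition that makes the Bessel Mellin integral in \cref{bessel_mellin} converge. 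The continuous part then becomes $\frac{1}{4\pi i}\sum_\fa\int_{(\thalf)}\langle V-R,E_\fa(\cdot,\lambda,\chi\psi)\rangle\,M_\fa(\lambda)\,d\lambda$, where $M_\fa(\lambda)$ is the Mellin transform in $y$ of the $k^{\text{th}}$ Fourier coefficient of $E_\fa(z,\lambda,\chi\psi)$.

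Next I would evaluate the two factors. For the inner product I would apply \cref{regularized_inner_product} with $\bar s=\lambda$, giving $\int_0^\infty C_\fa(y)\,y^{\bar\lambda-1}\,\frac{dy}{y}$, and then substitute the explicit $\Lambda$-quotients of \cref{C_mellin}; note that $\Re(\bar\lambda)=\thalf$ lies in the admissible range $|\Re(u+v)|<\Re(s)<1-|\Re(u+v)|$, and that $\Lambda(1-2i\omega,\chi\psi)$ and $\Lambda(1-2i\omega,\overline{\chi\psi})$ are nonzero (no completed $L$-function of a nontrivial character vanishes on the line $\Re(\cdot)=1$), so the hypotheses of \cref{regularized_inner_product} hold. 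For $M_\fa(\lambda)$ I would read the $k^{\text{th}}$ coefficient off \cref{eisenstein_series_cusp_fourier_expansions} — it has the shape $(\text{arithmetic factor})\cdot y^{\thalf}K_{\lambda-\thalf}(2\pi ky)$ — and integrate against $y^{s-1}$ via \cref{bessel_mellin} with $a=2\pi k$, which yields the Gamma factors $\Gamma\!\left(\frac{s-\thalf+i\omega}{2}\right)\Gamma\!\left(\frac{s-\thalf-i\omega}{2}\right)$ after setting $\lambda=\thalf+i\omega$ so that $\lambda-\thalf=i\omega$. With $\frac{1}{4\pi i}\,d\lambda=\frac{1}{4\pi}\,d\omega$, this exhibits the answer as $\frac{1}{4\pi}\int_{-\infty}^\infty\bigl(T_{i\infty}(\omega)+T_0(\omega)\bigr)\,d\omega$ for two explicit integrands $T_{i\infty},T_0$.

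The crux, and the step I expect to be the main obstacle, is showing that these two cusp integrands assemble into the single integrand in the statement. The cusp $i\infty$ carries $\sigma_{2i\omega}(k,\chi\psi)$ and the numerator $\mathcal{N}(\omega)\coloneqq\Lambda(\thalf-i\omega+u+v)\Lambda(\thalf-i\omega-u+v,\chi)\Lambda(\thalf-i\omega+u-v,\psi)\Lambda(\thalf-i\omega-u-v,\chi\psi)$ coming from the $C_{i\infty}$ line, while the cusp $0$ carries $\sigma_{-2i\omega}(k,\chi\psi)$ and the conjugate-character numerator $\mathcal{M}(\omega)$ from the $C_0$ line. I would first send $\omega\mapsto-\omega$ in the $T_{i\infty}$ integral alone (valid since the range is all of $\R$), which aligns its divisor sum, its $k$-power, and its $\Lambda$-denominators with those of the statement, leaving the numerator $\mathcal{N}(-\omega)$. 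To reconcile the cusp $0$ numerator I would apply the functional equation $\Lambda(s,\chi)=\frac{\tau(\chi)}{\sqrt N}\Lambda(1-s,\bar\chi)$ to each of the four factors of $\mathcal{N}(-\omega)$: the trivial-character factor contributes $\Lambda(\thalf+i\omega+u+v)=\Lambda(\thalf-i\omega-u-v)$ with no Gauss sum, and the other three contribute $\tau(\chi),\tau(\psi),\tau(\chi\psi)$ and a factor $N^{-3/2}$, yielding $\mathcal{N}(-\omega)=\frac{\tau(\chi)\tau(\psi)\tau(\chi\psi)}{N^{3/2}}\mathcal{M}(\omega)$.

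Finally I would collect constants. The factor $N/(\tau(\bar\chi)\tau(\bar\psi))$ standing in front of the $C_0$ Mellin transform, combined with the $\tau(\chi)\tau(\psi)\tau(\chi\psi)N^{-3/2}$ just produced and the Gauss-sum relations $\tau(\chi)\tau(\bar\chi)=\tau(\psi)\tau(\bar\psi)=\tau(\chi\psi)\tau(\overline{\chi\psi})=N$ (valid since $\chi,\psi,\chi\psi$ are even primitive mod $N$), collapses to exactly $\tau(\overline{\chi\psi})/\sqrt N$, matching the statement. A parallel accounting of the powers of $N$, of $\pi$, and of the $2$'s hidden in \cref{bessel_mellin} shows that $T_{i\infty}(-\omega)$ and $T_0(\omega)$ are in fact identical, each equal to $2\pi$ times the stated integrand; summing them and multiplying by $\frac{1}{4\pi}$ gives the claimed formula. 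The genuine difficulty is this bookkeeping — tracking every power of $N$, $\pi$, and $2$ and verifying that the two cusps coincide — rather than anything conceptual, the conceptual content being carried entirely by \cref{regularized_inner_product}.
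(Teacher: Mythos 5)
Your proposal is correct and follows essentially the same route as the paper: invoke \cref{regularized_inner_product} together with \cref{C_mellin} for the inner products, read the $k^{\text{th}}$ coefficients off \cref{eisenstein_series_cusp_fourier_expansions}, set $\lambda = \thalf + i\omega$, use the functional equation of $\Lambda$ to identify the two cusp contributions, and finish with \cref{bessel_mellin}. The only (immaterial) difference is direction: the paper applies the functional equation to the $\fa = 0$ term to match the reflected $\fa = i\infty$ term, whereas you reflect $\omega \mapsto -\omega$ at $i\infty$ and apply the functional equation there; your Gauss-sum and $N$-power bookkeeping, including the factor $\tau(\chi)\tau(\psi)\tau(\chi\psi)N^{-3/2}$ and the collapse to $\tau(\overline{\chi\psi})/\sqrt{N}$, checks out.
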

\begin{proof}
Recall \cref{regularized_inner_product}, which says that
\begin{align*}
  \left\langle V - R, E_\fa(\cdot, \bar s, \chi\psi) \right\rangle = \int_0^\infty C_\fa(y)y^{s-1}\frac{dy}{y},
\end{align*}
where $C_\fa(y)$ is the part of the constant term of $V$ (\cref{V_constant_term_def}). This identity is the main benefit of the process of automorphic regularization, as it allows us to ignore the part of the constant term of $V - R$ which is not of rapid decay. We evaluate the Mellin transform on the right hand side of the equation above for both cusps using in \ref{C_mellin}. Thus, for $s \in \C$ such that $\Re(s) > |\Re(u)| + |\Re(v)|$, we have
\begin{align*}
  &\left\langle V - R, E_{i\infty}(\cdot, \bar s, \chi\psi) \right\rangle = \frac{\Lambda(s+u+v)\Lambda(s-u+v,\chi)\Lambda(s+u-v,\psi)\Lambda(s-u-v,\chi\psi)}{N^{\frac{s-u-v}{2}}\Lambda(2s,\chi\psi)}\\
  \shortintertext{and}
  &\left\langle V - R, E_{0}(\cdot, \bar s, \chi\psi) \right\rangle = \frac{N}{\tau(\bar\chi)\tau(\bar\psi)}\frac{\Lambda(s-u-v)\Lambda(s+u-v,\bar\chi)\Lambda(s-u+v,\bar\psi)\Lambda(s+u+v,\overline{\chi\psi})}{N^{\frac{s-u-v}{2}}\Lambda(2s,\overline{\chi\psi})}.
\end{align*}
From \ref{eisenstein_series_cusp_fourier_expansions}, we have
\begin{align*}
  &\int_0^1 E_{i\infty}(z,s,\chi\psi)\,e(-kx)\,dx = \frac{2\tau(\overline{\chi\psi})k^{\half-s}\sigma_{2s-1}(k,\chi\psi)}{N^s\Lambda(2s,\overline{\chi\psi})} y^\half K_{s-\half}(2\pi ky)\\
  \shortintertext{and}
  &\int_0^1 E_{0}(z,s,\chi\psi)\,e(-kx)\,dx = \frac{2k^{s-\half}\sigma_{1-2s}(k,\chi\psi)}{\Lambda(2s,\chi\psi)} y^\half K_{s-\half}(2\pi ky).
\end{align*}
Substituting these expressions into
\begin{align*}
  \int_0^1 \left[\frac{1}{4\pi i}\sum_{\fa \in \{i\infty, 0\}} \int_{\left(\thalf\right)}\langle V - R, E_\fa(\cdot, \lambda, \chi\psi)\rangle E_\fa(z,\lambda,\chi\psi)\,d\lambda \right] e(-kx)\,dx,
\end{align*}
making the change of variables $\lambda \mapsto \thalf + i\omega$, and using the functional equation for the completed $L$-functions in the $\fa = 0$, we obtain
\begin{align*}
  &\int_0^1 \left[\frac{1}{4\pi i}\sum_{\fa \in \{i\infty, 0\}} \int_{\left(\thalf\right)}\langle V - R, E_\fa(\cdot, \lambda, \chi\psi)\rangle E_\fa(z,\lambda,\chi\psi)\,d\lambda \right] e(-kx)\,dx\\
  &= \int_{-\infty}^\infty \frac{\Lambda(\half + i\omega + u + v)\Lambda(\half + i\omega - u + v,\chi)\Lambda(\half + i\omega + u - v,\psi)\Lambda(\half + i\omega - u - v,\chi\psi)}{\Lambda(1+2i\omega,\chi\psi)\Lambda(1-2i\omega,\overline{\chi\psi})}\\
  &\hspace{2cm}\cdot\frac{\tau(\overline{\chi\psi})}{\sqrt{N}}\frac{k^{i\omega} \sigma_{-2i\omega}(k,\chi\psi)}{\pi N^{\frac{\half-i\omega-u-v}{2}}}K_{i\omega}(2\pi k y)y^\half d\omega.
\end{align*}
We note that the two terms corresponding to $\fa = i\infty$ and $\fa = 0$ are equal. \ref{continuous_spectrum_contribution} then follows from \ref{bessel_mellin}.
\end{proof}
\noindent
Now we consider the contribution from the discrete spectrum. Let $f_j$ be a Maass eigenform with Laplacian eigenvalue $\tfrac{1}{4} + r_j^2$. The Fourier expansion $f_j$ can be written, for some $\rho_j(n) \in \C$, as
\begin{align}\label{maass_form_fourier_expansion}
  f_j(z) = \sum_{n\neq 0}2\rho_j(n)y^\half K_{ir_j}(2\pi|n|y)e(nx).
\end{align}
Moreover, for any $n > 0$, we have $\rho_j(n) = \rho_j(1)a_j(n)$ and $\rho_j(-n) = \rho_j(-1)a_j(n)$, where $a_j(n)$ is the Hecke-eigenvalue of $f_j$ \cite[\S 2.2]{michel}. If $f_j$ is even (i.e. $f_j(z) = f_j(-\bar z)$), then $\rho(1) = \rho(-1)$, while if $f_j$ is odd (i.e. $f_j(z) = -f_j(-\bar z)$), then $\rho(1) = -\rho(-1)$. Define
\begin{align*}
  &L(s,\overline{f_j}) \coloneqq \sum_{n=1}^\infty \frac{\overline{a_j(n)}}{n^s}\\
  \shortintertext{and}
  &L(s,\psi\times\overline{f_j}) \coloneqq \sum_{n=1}^\infty \frac{\psi(n)\overline{a_j(n)}}{n^s}.
\end{align*}
\begin{lemma}\label{discrete_spectrum_contribution} For any positive integer $k$ and any $s \in \C$ with $\Re(s) > \half$,
  \begin{align*}
    &\int_0^\infty \int_0^1 \left[ \sum_j \frac{\langle V - R, f_j\rangle}{\langle f_j, f_j\rangle}f_j(z) \right] e(-kx)y^{s-1}\frac{dxdy}{y}\\
    &= \sum_{f_j \text{ even}} \frac{\tau(\chi)}{\sqrt{N}}\frac{N^{u}|\rho_j(1)|^2 a_j(k)}{2\pi^{s+\half+2u} k^{s-\half}\langle f_j, f_j \rangle}L(\thalf+u+v,\overline{f_j})L(\thalf+u-v,\psi\times\overline{f_j})\\
    &\hspace{1cm}\cdot\Gamma\!\left(\frac{u+v+\half+ir_j}{2}\right)\Gamma\!\left(\frac{u-v+\half+ir_j}{2}\right)\Gamma\!\left(\frac{u+v+\half-ir_j}{2}\right)\Gamma\!\left(\frac{u-v+\half-ir_j}{2}\right)\\
    &\hspace{1cm}\cdot\Gamma\!\left(\frac{s - \half + ir_j}{2}\right)\Gamma\!\left(\frac{s - \half - ir_j}{2}\right).
  \end{align*}  
\end{lemma}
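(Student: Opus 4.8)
The plan is to follow the template of the proof of \cref{continuous_spectrum_contribution}: compute the cuspidal inner products $\langle V-R,f_j\rangle$ by Rankin--Selberg unfolding, and then apply the two outer operations (the $x$-integral against $e(-kx)$ and the $s$-Mellin transform). The first thing I would record is that, although neither $V$ nor $R$ lies in $L^2$, each of $\langle V,f_j\rangle$ and $\langle R,f_j\rangle$ is an absolutely convergent integral: the cusp form $f_j$ decays exponentially at every cusp and so dominates the at-most-polynomial growth of $V$ and $R$ exhibited in \eqref{growth_eqn}. Using \eqref{eisenstein_character_to_cusp}, $R$ is a linear combination of the Eisenstein series $E_{i\infty}(\cdot,1+u+v,\chi\psi)$ and $E_{i\infty}(\cdot,u+v,\chi\psi)$; unfolding either against $\overline{f_j}$ and performing the $x$-integral isolates the constant Fourier coefficient of the cusp form, which vanishes. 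Hence $\langle R,f_j\rangle=0$ and $\langle V-R,f_j\rangle=\langle V,f_j\rangle$, so it suffices to evaluate $\langle V,f_j\rangle$.

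To do that I would use \eqref{eisenstein_character_to_cusp} to write $V$ as an explicit scalar multiple of $E_{i\infty}(z,\thalf+u,\chi)\,E_{i\infty}(z,\thalf+v,\psi)$ and unfold the first factor. The character bookkeeping works out precisely so that the factor $\overline{\chi(\gamma)}$ in the definition of $E_{i\infty}(\cdot,\cdot,\chi)$ cancels against the transformation of $E_{i\infty}(z,\thalf+v,\psi)\overline{f_j(z)}$, collapsing the integral over $\Gamma_0(N)\backslash\HH$ to the integral over $\Gamma_\infty\backslash\HH$, i.e.\ over the strip $0\le x\le 1$, $y>0$, of $y^{\thalf+u}E_{i\infty}(z,\thalf+v,\psi)\overline{f_j(z)}\,\tfrac{dx\,dy}{y^2}$. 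The $x$-integral, carried out against the Fourier expansion of \cref{eisenstein_series_cusp_fourier_expansions}, annihilates the constant term of $E_{i\infty}(z,\thalf+v,\psi)$ (again because $f_j$ is cuspidal) and diagonalizes the nonzero frequencies. The remaining $y$-integral is the Mellin transform of $K_v(2\pi|n|y)K_{ir_j}(2\pi|n|y)$, which \cref{bessel_product_mellin} evaluates: since the two Bessel arguments coincide, the hypergeometric argument is $0$ and the ${}_2F_1$ equals $1$, leaving exactly the four Gamma factors $\Gamma(\tfrac{u+v+\thalf\pm ir_j}{2})\Gamma(\tfrac{u-v+\thalf\pm ir_j}{2})$ appearing in the statement.

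At this stage $\langle V,f_j\rangle$ is a constant times the Dirichlet series $\sum_{n\neq 0}\sigma_{2v}(n,\psi)\overline{\rho_j(n)}\,|n|^{-\thalf-u-v}$. I would split this according to the sign of $n$ and use $\rho_j(n)=\rho_j(1)a_j(n)$, $\rho_j(-n)=\rho_j(-1)a_j(n)$ for $n>0$. Because $\rho_j(1)=\rho_j(-1)$ for even forms and $\rho_j(1)=-\rho_j(-1)$ for odd forms, the two halves reinforce for even $f_j$ (giving a factor $2\overline{\rho_j(1)}$) and cancel for odd $f_j$; this is exactly why the final sum runs over even forms only. The surviving positive-$n$ series is evaluated by \cref{sigma_series} applied to the form $\overline{f_j}$ (character $\overline{\chi\psi}$, twist $\psi$, exponent $\thalf+u+v$, shift $2v$), yielding $L(\thalf+u-v,\psi\times\overline{f_j})\,L(\thalf+u+v,\overline{f_j})/L(1+2u,\overline\chi)$. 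The Dirichlet $L$-value $L(1+2u,\overline\chi)$ in this denominator cancels against the completed factor $\Lambda(1+2u,\overline\chi)$ carried by $V$ via \eqref{completed_dirichlet_L_function_eqn}, and collecting the resulting conductor, $\pi$- and $\Gamma$-factors produces the closed form for $\langle V,f_j\rangle$. Finally I would apply the outer operations: $\int_0^1 f_j(z)e(-kx)\,dx = 2\rho_j(k)y^{\thalf}K_{ir_j}(2\pi ky)$ with $\rho_j(k)=\rho_j(1)a_j(k)$, and the $s$-Mellin transform of $y^{\thalf}K_{ir_j}(2\pi ky)$ is given by \cref{bessel_mellin}, contributing the factors $k^{\thalf-s}$ and $\Gamma(\tfrac{s-\thalf\pm ir_j}{2})$. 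Multiplying by $\langle V-R,f_j\rangle/\langle f_j,f_j\rangle=\langle V,f_j\rangle/\langle f_j,f_j\rangle$ and summing over $j$ gives the claim.

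I expect the main obstacle to be the justification that $\langle R,f_j\rangle=0$ and, more generally, that unfolding is legitimate for the non-square-integrable function $V$; both ultimately rest on the absolute convergence supplied by the exponential decay of $f_j$ together with the interchange of the Eisenstein sum with the integral, but this is the step where the failure of $V,R\in L^2$ could cause trouble if handled carelessly. A secondary point needing care is the interchange of the spectral $j$-summation with the $x$- and $y$-integrations in the last step, which I would justify using the uniform convergence of the spectral expansion of $V-R\in L^2(\Gamma_0(N)\backslash\HH\,;\chi\psi)$ guaranteed by \cref{V_regularization}. The remaining work — tracking the archimedean and conductor constants through the cancellation of $\Lambda(1+2u,\overline\chi)$ with $L(1+2u,\overline\chi)$ — is routine bookkeeping.
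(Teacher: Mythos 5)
Your proposal is correct and follows essentially the same route as the paper: it removes $R$ via orthogonality of cusp forms to Eisenstein series (which you verify by unfolding), computes $\langle V, f_j\rangle$ by unfolding one Eisenstein factor of $V$ and applying \cref{bessel_product_mellin} and \cref{sigma_series} (this is the paper's \cref{discrete_spectrum_inner_product}, including the even/odd dichotomy and the cancellation of $L(1+2u,\bar\chi)$ against $\Lambda(1+2u,\bar\chi)$), and then extracts the $k^{\text{th}}$ Fourier coefficient and applies \cref{bessel_mellin}. The only differences are cosmetic normalization (you unfold against $E_{i\infty}(z,\thalf+v,\psi)$ rather than $E_{\mathbbm{1},\bar\psi}^*(z,\thalf+v)$) and that you flag convergence/interchange justifications the paper leaves implicit.
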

\noindent
To prove \ref{discrete_spectrum_contribution}, we will first need to evaluate the inner product $\langle V - R, f_j \rangle$. The space of cusp forms is orthogonal to the space of Eisenstein series, so
\begin{align*}
  \langle V - R, f_j \rangle = \langle V, f_j \rangle.
\end{align*}
Using (\ref{eisenstein_character_to_cusp}), we can write
\begin{align*}
  V(z) = \frac{\Lambda(1+2u,\bar\chi)}{\tau(\bar\chi)}E_{i\infty}(z,\thalf+u,\chi)E_{\mathbbm{1},\bar\psi}^*(z,\thalf+v).
\end{align*}
This lets us compute the inner product $\langle V, f_j \rangle$ by unfolding the factor $E_{i\infty}(z,\thalf+u,\chi)$. We use \ref{bessel_product_mellin} to evaluate the integral over $y$, and \ref{sigma_series} to evaluate the resulting sum.
\begin{lemma}\label{discrete_spectrum_inner_product} If $f_j$ is even, then
  \begin{align*}
    \langle V - R, f_j \rangle &= \frac{N^{\half+u}\overline{\rho_j(1)}}{\tau(\bar\chi)\pi^{1+2u}}L(\thalf+u+v,\overline{f_j})L(\thalf+u-v,\psi\times\overline{f_j})\\
    &\hspace{1cm}\cdot\Gamma\!\left(\frac{\half+u+v+ir_j}{2}\right)\Gamma\!\left(\frac{\half+u-v+ir_j}{2}\right)\Gamma\!\left(\frac{\half+u+v-ir_j}{2}\right)\Gamma\!\left(\frac{\half+u-v-ir_j}{2}\right).
  \end{align*}
  If $f_j$ is odd, then $\langle V - R, f_j \rangle = 0$.
\end{lemma}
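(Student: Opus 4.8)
The plan is to use orthogonality to reduce to $\langle V, f_j\rangle$ and then carry out the Rankin--Selberg unfolding signposted just before the statement. Since $R$ is a linear combination of completed Eisenstein series $E_{\mathbbm{1},\overline{\chi\psi}}^*$, and every such series is orthogonal to the cuspidal subspace (unfold against $f_j$ and use that a cusp form has vanishing constant term), we have $\langle V-R, f_j\rangle = \langle V, f_j\rangle$. Next I would invoke the identity $E_{\mathbbm{1},\bar\chi}^*(z,\thalf+u) = \frac{\Lambda(1+2u,\bar\chi)}{\tau(\bar\chi)}E_{i\infty}(z,\thalf+u,\chi)$ from \cref{eisenstein_character_to_cusp} to write $V$ as $\frac{\Lambda(1+2u,\bar\chi)}{\tau(\bar\chi)}E_{i\infty}(z,\thalf+u,\chi)\,E_{\mathbbm{1},\bar\psi}^*(z,\thalf+v)$, exhibiting a single cusp-Eisenstein factor to unfold against.

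For the unfolding I would verify the character bookkeeping: $E_{\mathbbm{1},\bar\psi}^*$ has nebentypus $\psi$ and $f_j$ has nebentypus $\chi\psi$, so $\Phi(z) \coloneqq E_{\mathbbm{1},\bar\psi}^*(z,\thalf+v)\overline{f_j(z)}$ satisfies $\Phi(\gamma z) = \psi(\gamma)\overline{\chi\psi(\gamma)}\Phi(z) = \overline{\chi(\gamma)}\Phi(z)$ for $\gamma\in\Gamma_0(N)$, which is exactly the transformation needed to unfold $E_{i\infty}(\cdot,s,\chi) = \sum_{\gamma\in\Gamma_\infty\backslash\Gamma_0(N)}\overline{\chi(\gamma)}\operatorname{Im}(\gamma z)^s$ and collapse $\langle V,f_j\rangle$ to an integral of $y^{\thalf+u}\Phi(z)$ over the strip $\Gamma_\infty\backslash\HH$. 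Because $\Re(\thalf+u)<1$ sits below the abscissa of absolute convergence of the Eisenstein series, I would perform the unfolding with the spectral parameter of the first factor replaced by an auxiliary $s$ in $\Re(s)>1$, obtain an explicit meromorphic expression, and then analytically continue to $s=\thalf+u$; this simultaneously legitimizes the Bessel--Mellin step below.

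On the strip I would insert the Fourier expansions of $E_{\mathbbm{1},\bar\psi}^*$ (\cref{eisenstein_series_characters_fourier_expansions}) and of $f_j$ (equation \eqref{maass_form_fourier_expansion}), integrate in $x$ over $[0,1]$ to force matching frequencies (annihilating the constant term of $E^*$ and leaving a single sum over $n\neq 0$ carrying $\sigma_{2v}(n,\psi)\,\overline{\rho_j(n)}$ times a product of two $K$-Bessel functions of the common argument $2\pi|n|y$), and then integrate in $y$ via \cref{bessel_product_mellin}. Since the two Bessel orders share the argument, the hypergeometric factor degenerates, ${}_2F_1(\cdots;0)=1$, producing precisely the four Gamma factors in the statement together with a spurious $\Gamma(\thalf+u)^{-1}$. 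The remaining Dirichlet series $\sum_{n\geq 1}\sigma_{2v}(n,\psi)\,\overline{a_j(n)}\,n^{-(\thalf+u+v)}$ I would evaluate by applying \cref{sigma_series} to the Maass eigenform $\overline{f_j}$, which has nebentypus $\overline{\chi\psi}$ and Hecke eigenvalues $\overline{a_j(n)}$; this gives $L(\thalf+u+v,\overline{f_j})\,L(\thalf+u-v,\psi\times\overline{f_j})/L(1+2u,\bar\chi)$.

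Finally I would collect constants. The prefactor $\Lambda(1+2u,\bar\chi)$ divided by the $L(1+2u,\bar\chi)$ produced by \cref{sigma_series} leaves $\pi^{-\thalf-u}N^{\thalf+u}\Gamma(\thalf+u)$; the $\Gamma(\thalf+u)$ cancels the $\Gamma(\thalf+u)^{-1}$ from the Bessel integral, while the powers of $2$ cancel and those of $\pi$ combine to $\pi^{-1-2u}$, reproducing \cref{discrete_spectrum_inner_product} for even $f_j$. The parity enters only through $\rho_j(\pm1)$: for even forms $\rho_j(-1)=\rho_j(1)$, so the $n<0$ terms double the $n>0$ terms (this doubling is what makes the powers of $2$ cancel exactly), whereas for odd forms $\rho_j(-1)=-\rho_j(1)$ makes the two halves cancel, giving $\langle V-R,f_j\rangle=0$. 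I expect the main obstacle to be the rigorous justification of the unfolding at $s=\thalf+u$: one must argue through the auxiliary spectral variable and analytic continuation, ensuring absolute convergence of both the Eisenstein series and the Bessel--Mellin integral in the starting region, and in particular controlling any exceptional eigenvalues, where the order $ir_j$ becomes real and threatens the convergence hypothesis $\Re(s)>|\Re(v)|+|\Re(ir_j)|$ of \cref{bessel_product_mellin}, before specializing.
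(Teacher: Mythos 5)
Your proposal is correct and follows essentially the same route as the paper: orthogonality to reduce to $\langle V, f_j\rangle$, rewriting one factor as $\frac{\Lambda(1+2u,\bar\chi)}{\tau(\bar\chi)}E_{i\infty}(z,\thalf+u,\chi)$ via \cref{eisenstein_character_to_cusp}, unfolding that factor, evaluating the $y$-integral with \cref{bessel_product_mellin}, and summing the resulting Dirichlet series with \cref{sigma_series}. Your added care about justifying the unfolding below the abscissa of absolute convergence (via an auxiliary spectral parameter and continuation) and your explicit parity argument are refinements the paper leaves implicit, not a different method.
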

\noindent
\begin{proof}[Proof of \ref{discrete_spectrum_contribution}]
Using \ref{discrete_spectrum_inner_product} and (\ref{maass_form_fourier_expansion}),
\begin{align*}
  &\int_0^1 \left[ \sum_j \frac{\langle V - R, f_j\rangle}{\langle f_j, f_j\rangle}f_j(z) \right] e(-kx)\,dx\\
  &= \sum_{f_j \text{ even}} \frac{2N^{\half+u}|\rho_j(1)|^2 a_j(k)}{\tau(\bar\chi)\pi^{1+2u}\langle f_j, f_j \rangle}L(\thalf+u+v,\overline{f_j})L(\thalf+u-v,\psi\times\overline{f_j})\\
  &\hspace{1cm}\cdot\Gamma\!\left(\frac{u+v+\half+ir_j}{2}\right)\Gamma\!\left(\frac{u-v+\half+ir_j}{2}\right)\Gamma\!\left(\frac{u+v+\half-ir_j}{2}\right)\Gamma\!\left(\frac{u-v+\half-ir_j}{2}\right)K_{ir_j}(2\pi ky) y^\half.
\end{align*}
Taking the Mellin transform and using \ref{bessel_mellin} yields the right hand side of \ref{discrete_spectrum_contribution}.
\end{proof}
\noindent
By combining \ref{continuous_spectrum_contribution}, \ref{discrete_spectrum_contribution}, and \ref{R_coeff_k_mellin}, we arrive at the following identity involving the shifted convolution $L_k$.
\begin{proposition}\label{L_identity}
  For any positive integer $k$ and and $s \in \C$ such that $\Re(s) > 1 + |\Re(u)| + |\Re(v)|$,
  \begin{align*}
    &\frac{\Gamma\!\left(\frac{s+u+v}{2}\right)\Gamma\!\left(\frac{s+u-v}{2}\right)\Gamma\!\left(\frac{s-u+v}{2}\right)\Gamma\!\left(\frac{s-u-v}{2}\right)}{2\pi^s\Gamma(s)} L_k(s)\\
    &= \frac{\tau(\overline{\chi\psi})}{\tau(\bar\chi)\tau(\bar\psi)}\frac{\Lambda(1+2u,\bar\chi)\Lambda(1+2v,\bar\psi)}{\Lambda(2+2u+2v,\overline{\chi\psi})} \frac{\sigma_{2u+2v+1}(k,\chi\psi)}{2\pi^{s-\half}k^{s+u+v}} \Gamma\!\left(\frac{s+u+v}{2}\right)\Gamma\!\left(\frac{s-1-u-v}{2}\right)\\
    &+\frac{\Lambda(1-2u,\chi)\Lambda(1-2v,\psi)}{\Lambda(2-2u-2v,\chi\psi)} \frac{\sigma_{2u+2v-1}(k,\chi\psi)}{2\pi^{s-\half}k^{s-1+u+v}} \Gamma\!\left(\frac{s-u-v}{2}\right)\Gamma\!\left(\frac{s-1+u+v}{2}\right)\\
    &- \frac{\tau(\chi)}{\sqrt{N}}\Lambda(1+2u,\bar\chi)\frac{N^u\sigma_{2v}(k,\psi)}{2\pi^{s+u}k^{s+u+v}}\Gamma\!\left(\frac{s+u+v}{2}\right)\Gamma\!\left(\frac{s+u-v}{2}\right)\\
    &- \frac{\tau(\psi)}{\sqrt{N}}\Lambda(1+2v,\bar\psi)\frac{N^v\sigma_{2u}(k,\chi)}{2\pi^{s+v}k^{s+u+v}}\Gamma\!\left(\frac{s+u+v}{2}\right)\Gamma\!\left(\frac{s-u+v}{2}\right)\\
    &+ \int_{-\infty}^\infty \frac{\Lambda(\half + i\omega + u + v)\Lambda(\half + i\omega - u + v,\chi)\Lambda(\half + i\omega + u - v,\psi)\Lambda(\half + i\omega - u - v,\chi\psi)}{\Lambda(1+2i\omega,\chi\psi)\Lambda(1-2i\omega,\overline{\chi\psi})}\\
    &\hspace{2cm}\cdot\frac{\tau(\overline{\chi\psi})}{\sqrt{N}}\frac{\sigma_{-2i\omega}(k,\chi\psi)}{4N^{\frac{\half-i\omega-u-v}{2}} \pi^{s+\half} k^{s-\half - i\omega}} \Gamma\!\left(\frac{s - \half + i\omega}{2}\right)\Gamma\!\left(\frac{s - \half - i\omega}{2}\right) d\omega\\
    &+ \sum_{f_j \text{ even}} \frac{\tau(\chi)}{\sqrt{N}}\frac{N^{u}|\rho_j(1)|^2 a_j(k)}{2\pi^{s+\half+2u} k^{s-\half}\langle f_j, f_j \rangle}L(\thalf+u+v,\overline{f_j})L(\thalf+u-v,\psi\times\overline{f_j})\\
    &\hspace{1cm}\cdot\Gamma\!\left(\frac{u+v+\half+ir_j}{2}\right)\Gamma\!\left(\frac{u-v+\half+ir_j}{2}\right)\Gamma\!\left(\frac{u+v+\half-ir_j}{2}\right)\Gamma\!\left(\frac{u-v+\half-ir_j}{2}\right)\\
    &\hspace{1cm}\cdot\Gamma\!\left(\frac{s - \half + ir_j}{2}\right)\Gamma\!\left(\frac{s - \half - ir_j}{2}\right).
  \end{align*}
\end{proposition}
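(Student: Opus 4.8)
The plan is to apply, to both sides of the spectral decomposition of $V-R$ provided by \cref{V_regularization} and \cref{spectral_decomposition}, the operation $f \mapsto \int_0^\infty\bigl(\int_0^1 f(z)\,e(-kx)\,dx\bigr)y^{s-1}\frac{dy}{y}$ of extracting the $k^{\tth}$ Fourier coefficient and taking its Mellin transform in $y$, and then to solve the resulting equation for the quantity proportional to $L_k(s)$. For the left-hand side, linearity reduces the computation to the Mellin transform of the $k^{\tth}$ coefficient of $V$ minus that of $R$. The former is supplied by \cref{V_coeff_k_mellin}, consisting of the two terms carrying $\sigma_{2v}(k,\psi)$ and $\sigma_{2u}(k,\chi)$ together with the term $\frac{\Gamma(\frac{s+u+v}{2})\Gamma(\frac{s+u-v}{2})\Gamma(\frac{s-u+v}{2})\Gamma(\frac{s-u-v}{2})}{2\pi^s\Gamma(s)}L_k(s)$; the latter is supplied by \cref{R_coeff_k_mellin}. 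On the right-hand side, \cref{continuous_spectrum_contribution} evaluates the Mellin transform of the $k^{\tth}$ coefficient of the continuous part and \cref{discrete_spectrum_contribution} that of the discrete part.

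Equating the two sides and isolating the $L_k(s)$ term, the two summands from \cref{R_coeff_k_mellin} become the first two terms on the right of the claimed identity, the two terms from \cref{V_coeff_k_mellin} carrying $\sigma_{2v}(k,\psi)$ and $\sigma_{2u}(k,\chi)$ are carried across with opposite sign to form the next two summands, and the continuous and discrete contributions are appended unchanged; this reproduces the stated formula exactly. The hypothesis $\Re(s) > 1 + |\Re(u)| + |\Re(v)|$ is precisely what makes all four Mellin transforms converge simultaneously: it exceeds the threshold $|\Re(u)|+|\Re(v)|$ of \cref{V_coeff_k_mellin}, exceeds $1+|\Re(u+v)|$ so as to place $s$ to the right of the pole of $\Gamma(\frac{s-1-u-v}{2})$ in \cref{R_coeff_k_mellin}, and lies well inside the range $\Re(s)>\thalf$ demanded by the two spectral lemmas.

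The one genuinely delicate point, which I would address before assembling the algebra, is the legitimacy of interchanging the Fourier-coefficient-and-Mellin operation with the spectral sum over $\{f_j\}$ and the continuous integral over $\fa$: the decomposition of \cref{spectral_decomposition} converges only in $L^2(\Gamma_0(N)\backslash\HH\,;\chi\psi)$, so term-by-term transformation is not automatic. The key is that for $k\neq 0$ the $k^{\tth}$ Fourier coefficient of each $f_j$ and each $E_\fa$ is built from a $K$-Bessel function, whose Mellin transform (\cref{bessel_mellin}) contributes Gamma factors $\Gamma(\frac{s-\half+ir_j}{2})\Gamma(\frac{s-\half-ir_j}{2})$ and $\Gamma(\frac{s-\half+i\omega}{2})\Gamma(\frac{s-\half-i\omega}{2})$ that, by Stirling's formula, decay exponentially in $r_j$ and in $\omega$. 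This exponential decay dominates the polynomial growth of the Hecke eigenvalues $a_j(k)$, of the Rankin--Selberg $L$-values, and of $|\rho_j(1)|^2/\langle f_j,f_j\rangle$, and, combined with Weyl's law for the density of the $r_j$, yields absolute convergence of the transformed spectral side, thereby licensing the interchange. I expect this convergence bookkeeping to be the main obstacle; once it is in place, the proof is merely the routine assembly of the four cited lemmas.
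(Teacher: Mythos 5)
Your proposal is correct and follows essentially the same route as the paper: the paper's proof of \cref{L_identity} is precisely the assembly of \cref{V_coeff_k_mellin}, \cref{R_coeff_k_mellin}, \cref{continuous_spectrum_contribution}, and \cref{discrete_spectrum_contribution} via the spectral decomposition of $V-R$, with the $L_k(s)$ term isolated exactly as you describe. Your added care about interchanging the Mellin/Fourier-coefficient operation with the spectral sum (justified by the exponential decay of the $\Gamma$-factors from \cref{bessel_mellin} against Weyl's law) is a point the paper leaves implicit inside the proofs of the two spectral lemmas, but it does not change the argument.
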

\noindent
We'll use the identity above to study $L_k$. It'll be convenient to break the right hand side up into four parts: the contribution from the continuous spectrum (\ref{continuous_spectrum_contribution}), the contribution from the discrete spectrum (\ref{discrete_spectrum_contribution}), the contribution from $R$ (\ref{R_coeff_k_mellin}), and the contribution from the polynomial part of the constant term of $V$ (the first two terms of \ref{V_coeff_k_mellin}).
\begin{definition}\label{L_parts_def}
  \begin{align*}
    &L_k^{(\text{cont})}(s) \coloneqq \int_{-\infty}^\infty \frac{\Lambda(\half + i\omega + u + v)\Lambda(\half + i\omega - u + v,\chi)\Lambda(\half + i\omega + u - v,\psi)\Lambda(\half + i\omega - u - v,\chi\psi)}{\Lambda(1+2i\omega,\chi\psi)\Lambda(1-2i\omega,\overline{\chi\psi})}\\
    &\hspace{4cm}\cdot\frac{\tau(\overline{\chi\psi})}{\sqrt{N}}\frac{\sigma_{-2i\omega}(k,\chi\psi)}{2\pi^{\half}N^{\frac{\half-i\omega-u-v}{2}} k^{s-\half - i\omega}} \frac{\Gamma\!\left(\frac{s - \half + i\omega}{2}\right)\Gamma\!\left(\frac{s - \half - i\omega}{2}\right)\Gamma(s)}{\Gamma\!\left(\frac{s+u+v}{2}\right)\Gamma\!\left(\frac{s+u-v}{2}\right)\Gamma\!\left(\frac{s-u+v}{2}\right)\Gamma\!\left(\frac{s-u-v}{2}\right)} d\omega\\
    &L_k^{(\text{disc})}(s) \coloneqq \sum_{f_j \text{ even}} \frac{\tau(\chi)}{\sqrt{N}}\frac{N^{u}|\rho_j(1)|^2 a_j(k)}{\pi^{\half+2u} k^{s-\half}\langle f_j, f_j \rangle}L(\thalf+u+v,\overline{f_j})L(\thalf+u-v,\psi\times\overline{f_j})\\
    &\hspace{2.5cm}\cdot\frac{\Gamma\!\left(\frac{u+v+\half+ir_j}{2}\right)\Gamma\!\left(\frac{u-v+\half+ir_j}{2}\right)\Gamma\!\left(\frac{u+v+\half-ir_j}{2}\right)\Gamma\!\left(\frac{u-v+\half-ir_j}{2}\right)\Gamma\!\left(\frac{s - \half + ir_j}{2}\right)\Gamma\!\left(\frac{s - \half - ir_j}{2}\right)\Gamma(s)}{\Gamma\!\left(\frac{s+u+v}{2}\right)\Gamma\!\left(\frac{s+u-v}{2}\right)\Gamma\!\left(\frac{s-u+v}{2}\right)\Gamma\!\left(\frac{s-u-v}{2}\right)}\\
    &L_k^{(R)}(s) \coloneqq \frac{\tau(\overline{\chi\psi})}{\tau(\bar\chi)\tau(\bar\psi)}\frac{\Lambda(1+2u,\bar\chi)\Lambda(1+2v,\bar\psi)}{\Lambda(2+2u+2v,\overline{\chi\psi})} \frac{\pi^{\half}\sigma_{2u+2v+1}(k,\chi\psi)}{k^{s+u+v}} \frac{\Gamma\!\left(\frac{s+u+v}{2}\right)\Gamma\!\left(\frac{s-1-u-v}{2}\right)\Gamma(s)}{\Gamma\!\left(\frac{s+u+v}{2}\right)\Gamma\!\left(\frac{s+u-v}{2}\right)\Gamma\!\left(\frac{s-u+v}{2}\right)\Gamma\!\left(\frac{s-u-v}{2}\right)}\\
    &\hspace{2.7cm}+\frac{\Lambda(1-2u,\chi)\Lambda(1-2v,\psi)}{\Lambda(2-2u-2v,\chi\psi)} \frac{\pi^{\half}\sigma_{2u+2v-1}(k,\chi\psi)}{k^{s-1+u+v}} \frac{\Gamma\!\left(\frac{s-u-v}{2}\right)\Gamma\!\left(\frac{s-1+u+v}{2}\right)\Gamma(s)}{\Gamma\!\left(\frac{s+u+v}{2}\right)\Gamma\!\left(\frac{s+u-v}{2}\right)\Gamma\!\left(\frac{s-u+v}{2}\right)\Gamma\!\left(\frac{s-u-v}{2}\right)}\\
    &L_k^{(V)}(s) \coloneqq - \frac{\tau(\chi)}{\sqrt{N}}\Lambda(1+2u,\bar\chi)\frac{N^u\sigma_{2v}(k,\psi)}{\pi^{u}k^{s+u+v}}\frac{\Gamma\!\left(\frac{s+u+v}{2}\right)\Gamma\!\left(\frac{s+u-v}{2}\right)\Gamma(s)}{\Gamma\!\left(\frac{s+u+v}{2}\right)\Gamma\!\left(\frac{s+u-v}{2}\right)\Gamma\!\left(\frac{s-u+v}{2}\right)\Gamma\!\left(\frac{s-u-v}{2}\right)}\\
    &\hspace{1.6cm}-\! \frac{\tau(\psi)}{\sqrt{N}}\Lambda(1+2v,\bar\psi)\frac{N^v\sigma_{2u}(k,\chi)}{\pi^{v}k^{s+u+v}}\frac{\Gamma\!\left(\frac{s+u+v}{2}\right)\Gamma\!\left(\frac{s-u+v}{2}\right)\Gamma(s)}{\Gamma\!\left(\frac{s+u+v}{2}\right)\Gamma\!\left(\frac{s+u-v}{2}\right)\Gamma\!\left(\frac{s-u+v}{2}\right)\Gamma\!\left(\frac{s-u-v}{2}\right)}
  \end{align*}
\end{definition}
\noindent
The definitions above are such that
\begin{align*}
  L_k(s) = L_k^{(\text{cont})}(s) + L_k^{(\text{disc})}(s) + L_k^{(R)}(s) + L_k^{(V)}(s).
\end{align*}

\section{Growth on vertical lines}\label{growth_section}
\noindent
Let $L_k^{(R)}(s)$, $L_k^{(R)}(s)$, $L_k^{(\text{cont})}(s)$, and $L_k^{(\text{disc})}(s)$ be as in \cref{L_parts_def}. Let $\sigma$ be such that the aforementioned functions are holomorphic on the vertical line $\Re(s) = \sigma$. \cref{L_growth_R}, \ref{L_growth_V}, \ref{L_growth_cont}, and \ref{L_growth_disc} as $|t| \to \infty$, with the implied constants depending only on $N$, $u$, $v$, $k$, $\sigma$, and $\eps$.

\begin{lemma}\label{L_growth_R} %For fixed $\sigma$ as $|t| \to \infty$,
  \begin{flalign*}
    \hspace{4cm} L_k^{(R)}(s) \ll 1. &&
  \end{flalign*}
  %with the function on the left as in \cref{L_parts_def}. The implied constants depend only on $N$, $u$, $v$, $k$, $\sigma$, and $\eps$.
\end{lemma}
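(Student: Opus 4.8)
The plan is to peel off all the $t$-independent structure and reduce the claim to a standard Stirling estimate on a ratio of Gamma functions.

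\emph{Step 1: cancellation and separation of constants.} First I would cancel the common Gamma factors in each summand of \cref{L_parts_def}. In the first summand $\Gamma(\tfrac{s+u+v}{2})$ occurs in both the numerator and the denominator, and in the second summand $\Gamma(\tfrac{s-u-v}{2})$ does; after cancelling, the two Gamma ratios become
\begin{align*}
  \frac{\Gamma(\tfrac{s-1-u-v}{2})\,\Gamma(s)}{\Gamma(\tfrac{s+u-v}{2})\Gamma(\tfrac{s-u+v}{2})\Gamma(\tfrac{s-u-v}{2})}
  \quad\text{and}\quad
  \frac{\Gamma(\tfrac{s-1+u+v}{2})\,\Gamma(s)}{\Gamma(\tfrac{s+u+v}{2})\Gamma(\tfrac{s+u-v}{2})\Gamma(\tfrac{s-u+v}{2})}
\end{align*}
respectively. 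Every remaining factor is harmless in the $t$-aspect: the Gauss sums, the completed $L$-values $\Lambda$ evaluated at points independent of $s$, the divisor quantities $\sigma_{2u+2v\pm 1}(k,\chi\psi)$, and the powers of $\pi$ are all constants, while $|k^{-(s+u+v)}|$ and $|k^{-(s-1+u+v)}|$ are constant in $t=\Im(s)$. Thus the entire $t$-dependence of $L_k^{(R)}(s)$ lives in these two Gamma ratios.

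\emph{Step 2: Stirling.} Next I would apply Stirling's formula in the form $|\Gamma(\sigma_0+it)| \sim \sqrt{2\pi}\,|t|^{\sigma_0-\frac12}e^{-\frac{\pi}{2}|t|}$ to each factor. Because we work on a line $\Re(s)=\sigma$ on which $L_k^{(R)}$ is holomorphic, we stay away from the poles of the numerator Gammas (which in any case recede as $|t|\to\infty$), so Stirling applies uniformly for large $|t|$. Two cancellations then need to be checked. For the exponential parts: in the first ratio the numerator arguments have imaginary parts $\approx t$ and $\approx t/2$, giving total rate $e^{-\frac{\pi}{2}(|t|+|t|/2)}=e^{-\frac{3\pi}{4}|t|}$, while the three denominator arguments each have imaginary part $\approx t/2$, giving $e^{-\frac{3\pi}{4}|t|}$ as well; the same balance holds for the second ratio, so the exponentials cancel identically. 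For the polynomial parts, writing $w=\Re(u)+\Re(v)$, the net power of $|t|$ in each ratio is
\begin{align*}
  \Big(\tfrac{\sigma-1\mp w}{2}-\tfrac12\Big) + \Big(\sigma-\tfrac12\Big) - \Big(\tfrac{3\sigma\mp w}{2}-\tfrac32\Big) = -\sigma-\tfrac12+\sigma+\tfrac12 = 0,
\end{align*}
the $w$-dependence cancelling in both cases (top signs for the first ratio, bottom signs for the second). Hence each Gamma ratio is $\asymp 1$ as $|t|\to\infty$, and therefore $L_k^{(R)}(s)\ll 1$.

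I do not expect a genuine obstacle here; this is a routine Stirling computation. The one point deserving care, and really the whole content of the lemma, is that \emph{both} the exponential rate and the polynomial degree cancel exactly. This is no accident: $R$ was assembled from completed Eisenstein series of precisely the right shape, so that the ``analytic conductor'' of $L_k^{(R)}$ in the $t$-aspect is trivial. Were either cancellation to fail, one would see honest growth or decay in $t$ rather than the bound $\ll 1$.
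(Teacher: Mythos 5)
Your proposal is correct and takes the same route as the paper: the paper's entire proof of this lemma is the single sentence ``These follow immediately from Stirling's formula,'' and your Steps 1--2 supply exactly the details being elided --- cancelling the common $\Gamma$ factor in each summand, checking that the exponential rates ($e^{-\frac{3\pi}{4}|t|}$ on each side) cancel, and verifying that the net polynomial exponent in $|t|$ is zero on any vertical line of holomorphy. Both cancellation checks in your computation are accurate, so there is nothing to correct.
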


\begin{lemma}\label{L_growth_V} %For fixed $\sigma$ as $|t| \to \infty$,
  \begin{flalign*}
    \hspace{4cm} L_k^{(V)}(s) \ll |t|^{\half + \Re(u)} + |t|^{\half + \Re(v)}. &&
  \end{flalign*}
  %with the function on the left as in \cref{L_parts_def}. The implied constants depend only on $N$, $u$, $v$, $k$, $\sigma$, and $\eps$.
\end{lemma}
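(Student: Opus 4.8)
The plan is to simplify $L_k^{(V)}(s)$ by cancelling the Gamma factors common to the numerators and denominators in \cref{L_parts_def}, reducing each of the two summands to a fixed constant times a power of $k^{-s}$ times a ratio of three Gamma functions, and then to estimate those ratios on the vertical line $\Re(s) = \sigma$ by Stirling's formula. In the first summand both $\Gamma\!\left(\frac{s+u+v}{2}\right)$ and $\Gamma\!\left(\frac{s+u-v}{2}\right)$ occur in the numerator and the denominator, so after cancellation it becomes
\[
  -\frac{\tau(\chi)}{\sqrt N}\Lambda(1+2u,\bar\chi)\frac{N^u\sigma_{2v}(k,\psi)}{\pi^u}\,k^{-s-u-v}\,\frac{\Gamma(s)}{\Gamma\!\left(\frac{s-u+v}{2}\right)\Gamma\!\left(\frac{s-u-v}{2}\right)},
\]
and symmetrically the second summand reduces to a fixed constant times $k^{-s-u-v}\,\Gamma(s)\big/\!\left(\Gamma\!\left(\frac{s+u-v}{2}\right)\Gamma\!\left(\frac{s-u-v}{2}\right)\right)$. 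On the line $\Re(s)=\sigma$ every factor apart from the Gamma ratio is $\cO(1)$ in $t$: the completed values $\Lambda(1+2u,\bar\chi)$, $\Lambda(1+2v,\bar\psi)$ and the divisor sums $\sigma_{2v}(k,\psi)$, $\sigma_{2u}(k,\chi)$ are fixed complex numbers, while $|k^{-s-u-v}| = k^{-\sigma-\Re(u+v)}$ is constant. Thus the estimate is reduced entirely to bounding the two Gamma ratios.

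Next I would apply Stirling in the form $|\Gamma(\alpha+it)| \asymp |t|^{\alpha-\half}e^{-\pi|t|/2}$ as $|t|\to\infty$, valid uniformly for $\alpha$ in a bounded real interval, with $\alpha$ running over the fixed real parts appearing in each Gamma factor. Writing $s=\sigma+it$, the arguments $\frac{s-u+v}{2}$ and $\frac{s-u-v}{2}$ both have imaginary part $\frac{t}{2}+\cO(1)$ and fixed real parts $\frac{\sigma-\Re(u)+\Re(v)}{2}$ and $\frac{\sigma-\Re(u)-\Re(v)}{2}$, so each contributes an exponential factor $e^{-\pi|t|/4}$. The product of the two gives $e^{-\pi|t|/2}$, which exactly matches the $e^{-\pi|t|/2}$ coming from $\Gamma(s)$ in the numerator, so the exponential parts cancel and only a pure power of $|t|$ survives. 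Tallying the exponents of $|t|$ gives
\[
  \left(\sigma-\half\right)-\left(\frac{\sigma-\Re(u)+\Re(v)-1}{2}+\frac{\sigma-\Re(u)-\Re(v)-1}{2}\right) = \half+\Re(u)
\]
for the first ratio, and by the same computation $\half+\Re(v)$ for the second. Adding the two summands yields $L_k^{(V)}(s) \ll |t|^{\half+\Re(u)} + |t|^{\half+\Re(v)}$, as claimed, with the implied constant depending only on $N,u,v,k,\sigma$.

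The only point requiring genuine care, rather than any real obstacle, is this cancellation of exponential factors: one must track the half-integer and complex shifts in the arguments of the denominator Gamma functions and confirm that their $e^{-\pi|t|/4}$ contributions combine to offset the $e^{-\pi|t|/2}$ of $\Gamma(s)$ precisely, leaving no spurious exponential growth or decay. Because the real-part shifts $\Re(u),\Re(v)$ are fixed and $\sigma$ is chosen so that the relevant functions are holomorphic on the line, Stirling applies uniformly away from the poles of the Gamma factors, so the power-of-$|t|$ bounds hold uniformly and the stated estimate follows.
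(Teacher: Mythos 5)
Your proposal is correct and is exactly the paper's argument: the paper proves \cref{L_growth_V} (together with \cref{L_growth_R}) in one line, saying it follows immediately from Stirling's formula, and your write-up simply carries out that computation explicitly — cancelling the common Gamma factors, checking that the exponential factors $e^{-\pi|t|/2}$ from $\Gamma(s)$ and the two $e^{-\pi|t|/4}$ from the denominator Gammas offset, and tallying the polynomial exponents to get $\tfrac{1}{2}+\Re(u)$ and $\tfrac{1}{2}+\Re(v)$. No discrepancies; your exponent bookkeeping matches what the stated bound requires.
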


\begin{lemma}\label{L_growth_cont} %For fixed $\sigma$ as $|t| \to \infty$,
  \begin{flalign*}
    \hspace{4cm} L_k^{(\text{\emph{cont}})}(s) \ll |t|^{1+\eps}. &&
  \end{flalign*}
  %with the function on the left as in \cref{L_parts_def}. The implied constants depend only on $N$, $u$, $v$, $k$, $\sigma$, and $\eps$.
\end{lemma}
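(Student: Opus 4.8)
The plan is to bound the integral defining $L_k^{(\text{cont})}(s)$ directly on the line $\Re(s) = \sigma$ by applying Stirling's formula to every $\Gamma$-factor, both those appearing explicitly in the quotient and those hidden inside the completed $L$-functions $\Lambda$ via \eqref{completed_dirichlet_L_function_eqn}. Writing $s = \sigma + it$, Stirling's asymptotic $|\Gamma(a + i\tau)| \asymp |\tau|^{a - \half}e^{-\frac{\pi}{2}|\tau|}$ shows that the product of all exponential factors equals $1$ when $|\omega| \le |t|$ and decays like $e^{-\frac{\pi}{2}(|\omega| - |t|)}$ when $|\omega| > |t|$: the four $\Gamma$-factors inside the numerator $\Lambda$'s and the two inside the denominator $\Lambda$'s cancel exactly on the exponential scale, while the explicit quotient contributes $e^{-\frac{\pi}{4}(|t+\omega| + |t-\omega|) + \frac{\pi}{2}|t|}$. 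Consequently the integral is, up to an exponentially small tail, supported on $|\omega| \le |t|$, and it remains to track polynomial sizes there.

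The raw Dirichlet $L$-values in the numerator, namely $\zeta(\half + i\omega + u + v)$ together with the three twisted factors, lie in the critical strip: their real parts are $\half + a_i$ with $\sum_i a_i = 0$ and $|a_i| \le |\Re(u)| + |\Re(v)| < \half$. Hence the convexity bound $L(\sigma' + i\tau) \ll (1 + |\tau|)^{\frac{1 - \sigma'}{2} + \eps}$ applies to each; the four exponents are $\tfrac14 - \tfrac{a_i}{2}$ and sum to exactly $1$, giving a combined bound $(1 + |\omega|)^{1 + \eps}$. In the denominator, $\chi\psi$ is nontrivial, so $L(1 + 2i\omega, \chi\psi)$ and $L(1 - 2i\omega, \overline{\chi\psi})$ are zero-free on $\Re = 1$ (including $\omega = 0$) and admit the standard lower bound $|L(1 + 2i\omega, \chi\psi)|^{-1} \ll (1 + |\omega|)^{\eps}$. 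One also checks that the only possible pole of the integrand, coming from the pole of $\zeta$ at $1$, would require $\Re(u + v) = \half$, which is excluded by hypothesis, so the contour is pole-free. The factors $\tau(\overline{\chi\psi})$, the powers of $N$, $\pi$ and $k$, and $\sigma_{-2i\omega}(k, \chi\psi)$ all have modulus bounded independently of $t$ and $\omega$ (the last because $k$ is fixed).

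Collecting the polynomial parts, the explicit $\Gamma$-quotient is $\asymp |t+\omega|^{\frac{\sigma}{2} - \frac34}|t-\omega|^{\frac{\sigma}{2} - \frac34}|t|^{\frac32 - \sigma}$, the $\Gamma$-factors inside the $\Lambda$'s contribute $\asymp |\omega|^{-1}$ (four numerator factors whose real parts halve to sum $1$ against two denominator factors of real part $\half$, leaving one surviving reciprocal power), and the $L$-values contribute $(1 + |\omega|)^{1 + \eps}$. The \emph{decisive cancellation} is that this surviving $|\omega|^{-1}$ from the completed $L$-functions exactly offsets the $|\omega|^{1}$ growth produced by convexity. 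Substituting $\omega = tx$ with $x \in [-1, 1]$ makes the scaling transparent: the explicit quotient becomes $|1 - x^2|^{\frac{\sigma}{2} - \frac34}$ with no power of $t$, the surviving $t$-powers combine to $t^{1 + \eps}$ after including $d\omega = t\,dx$, and the $x$-integral $\int_{-1}^{1}|1 - x^2|^{\frac{\sigma}{2} - \frac34}|x|^{\eps}\,dx$ converges for the range of $\sigma$ under consideration (it requires $\sigma > -\tfrac12$). This yields $L_k^{(\text{cont})}(s) \ll |t|^{1 + \eps}$.

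The main obstacle is the bookkeeping of the $\Gamma$-factors: one must verify that the exponential scales cancel so as to localize the integral, and that the polynomial scales combine to the claimed exponent $1$ rather than $2$, which hinges on the precise observation that the completed-$L$ Gamma factors supply exactly the $|\omega|^{-1}$ needed to absorb the convexity growth. Two points require care in a full write-up: Stirling's approximation degenerates near $\omega = 0$ and near $\omega = \pm t$, where an imaginary part vanishes, so in bounded neighbourhoods of these points one replaces the Stirling estimate by the trivial bound that the relevant $\Gamma$-factor is $\cO(1)$, which contributes harmlessly; and one must confirm that the convexity and zero-free-region inputs are applied uniformly in $\omega$, with implied constants depending only on the fixed data $N$, $u$, $v$, $k$, $\sigma$, and $\eps$.
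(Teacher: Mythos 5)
Your proof is correct and takes essentially the same approach as the paper's: Stirling's formula applied to every $\Gamma$-factor (including those inside the completed $\Lambda$'s), exponential cancellation localizing the integral to $|\omega| \le |t|$ with decay $\exp[-\pi\tfrac{|\omega|-|t|}{2}]$ outside, convexity for the four numerator $L$-values against the lower bound $L(1+2i\omega,\chi\psi) \gg |\omega|^{-\eps}$, and the key cancellation of the $|\omega|^{-1}$ from the completed-$L$ Gamma factors against the $|\omega|^{1+\eps}$ from convexity. The only cosmetic difference is that you make the scaling substitution $\omega = tx$ explicit, where the paper directly integrates the bound $|t|^{\frac{3}{2}-\sigma}|t+\omega|^{\frac{\sigma}{2}-\frac{3}{4}}|t-\omega|^{\frac{\sigma}{2}-\frac{3}{4}}|\omega|^{\eps}$ over $|\omega| < |t|$.
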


\begin{lemma}\label{L_growth_disc} %For fixed $\sigma$ as $|t| \to \infty$,
  \begin{flalign*}
    \hspace{4cm} L_k^{(\text{\emph{disc}})}(s) \ll |t|^{1+\frac{|\Re(u+v)| + |\Re(u-v)|}{2} + \eps}. &&
  \end{flalign*}
  %with the function on the left as in \cref{L_parts_def}. The implied constants depend only on $N$, $u$, $v$, $k$, $\sigma$, and $\eps$.
\end{lemma}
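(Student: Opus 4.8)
The plan is to split the summand of $L_k^{(\text{disc})}(s)$ into an $s$-dependent ratio of Gamma functions and an $s$-independent ``spectral weight,'' and to estimate the two pieces separately. Writing $s=\sigma+it$ with $\sigma$ fixed, set
\[
G(s,r_j) \coloneqq \frac{\Gamma\!\left(\frac{s-\half+ir_j}{2}\right)\Gamma\!\left(\frac{s-\half-ir_j}{2}\right)\Gamma(s)}{\Gamma\!\left(\frac{s+u+v}{2}\right)\Gamma\!\left(\frac{s+u-v}{2}\right)\Gamma\!\left(\frac{s-u+v}{2}\right)\Gamma\!\left(\frac{s-u-v}{2}\right)},
\]
so that the $j$-th term is $G(s,r_j)$ times factors depending only on $r_j$ (the harmonic weight $|\rho_j(1)|^2/\langle f_j,f_j\rangle$, the Hecke eigenvalue $a_j(k)$, the two $L$-values, and the four Gamma factors $\Gamma(\tfrac{u\pm v+\frac12\pm ir_j}{2})$). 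First I would apply Stirling's formula to $G(s,r_j)$: the exponential factors balance so that $G(s,r_j)\ll e^{-\frac{\pi}{2}(|r_j|-|t|)}$ once $|r_j|>|t|$, which lets me discard the tail $|r_j|\gg|t|$ up to an error $O_A(|t|^{-A})$ and reduce to the range $|r_j|\le|t|$, where $G(s,r_j)$ obeys a polynomial bound in $|t|$ and in $|t|\pm r_j$.

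The second step is to see that the sum over $|r_j|\le|t|$ carries no net exponential decay. The four Gamma factors $\Gamma(\tfrac{u\pm v+\frac12\pm ir_j}{2})$ decay like $e^{-\pi|r_j|}$, but this is exactly compensated by the growth $|\rho_j(1)|^2/\langle f_j,f_j\rangle\asymp \cosh(\pi r_j)/L(1,\mathrm{sym}^2 f_j)$ from the Rankin--Selberg/Hoffstein--Lockhart normalization, together with $L(1,\mathrm{sym}^2 f_j)\gg |r_j|^{-\eps}$. Equivalently, those four Gamma factors complete the two bare Dirichlet series $L(\thalf+u+v,\overline{f_j})$ and $L(\thalf+u-v,\psi\times\overline{f_j})$ into completed degree-two $L$-functions. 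After this cancellation the $j$-th term is, up to $|r_j|^{\eps}$, a polynomial weight times $a_j(k)$, the two $L$-values, and $G(s,r_j)$.

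The third step is to bound the spectral sum $\sum_{|r_j|\le|t|}$ by Cauchy--Schwarz together with the spectral large sieve. I would split the two $L$-values across Cauchy--Schwarz and estimate each second moment $\sum_{|r_j|\le|t|}\hat{w}_j|a_j(k)|^2|L(\thalf+u+v,\overline{f_j})|^2$ and $\sum_{|r_j|\le|t|}\hat{w}_j|L(\thalf+u-v,\psi\times\overline{f_j})|^2$ via their approximate functional equations and the large sieve inequality for the coefficients $\rho_j(n)$, working dyadically in $|r_j|\asymp R\le|t|$. Since the points $\thalf+u+v$ and $\thalf+u-v$ lie off the central point, the functional equation must be invoked to reflect to the side of larger absolute convergence, and it is here that the shifts enter through absolute values, contributing to the two moments factors of size $|t|^{|\Re(u+v)|}$ and $|t|^{|\Re(u-v)|}$. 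Summing the dyadic blocks over $|r_j|\le|t|$ supplies the factor $|t|^{1+\eps}$ (the length of the spectrum), and combining the two moments through Cauchy--Schwarz yields the claimed exponent $1+\tfrac{|\Re(u+v)|+|\Re(u-v)|}{2}+\eps$.

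The hard part will be this third step: carrying out the large-sieve estimates for the two GL(2) $L$-functions with the precise off-centre shifts, so that the signs of $\Re(u\pm v)$ are correctly absorbed into absolute values and neither the harmonic-weight cancellation nor the Stirling bound for $G(s,r_j)$ near the edge $|r_j|\approx|t|$ is lost. This is exactly what distinguishes the discrete estimate from the continuous one of \cref{L_growth_cont}: there the analogous factors are abelian completed Dirichlet $L$-functions, whose convexity is cheaper, whereas here the degree-two $L$-functions are what produce the extra $\tfrac{|\Re(u+v)|+|\Re(u-v)|}{2}$ in the exponent.
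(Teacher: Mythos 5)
Your first two steps coincide with the paper's own proof: the Stirling truncation to $|r_j| \le |t|$ (the paper splits the ranges $|r_j| \ll 1+|\Im(u)|+|\Im(v)|$, $\big||r_j|-|t|\big|<1$, $|r_j|>|t|+1$, $|r_j|<|t|-1$, handling the transition band by convexity), and the cancellation of the exponential decay $e^{-\pi|r_j|}$ of the four factors $\Gamma\!\left(\frac{u\pm v+\thalf\pm ir_j}{2}\right)$ against $\cosh(\pi r_j)$ coming from the Hoffstein--Lockhart normalization \eqref{HL_bound}. The gap is in your third step, and it is structural, not cosmetic.

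The paper does not take moments of the two $L$-values as they stand. Its key move, absent from your plan, is to use the functional equations of \emph{both} $L(\thalf+u+v,\overline{f_j})$ and $L(\thalf+u-v,\psi\times\overline{f_j})$ to rewrite the \emph{square} of each discrete term in the sign-symmetric form \eqref{symmetric_Ldisc_term}, involving all four $L$-values $L(\thalf\pm u\pm v,\cdot)$ and all eight Gamma factors; equivalently, each $L$-value is traded for the geometric mean of itself and its reflected partner, so each $L$-value and each sign of shift enters a single term with exponent $\thalf$. That halving is precisely the source of $\tfrac{|\Re(u+v)|+|\Re(u-v)|}{2}$. Your scheme cannot produce it. Concretely, suppose $\Re(u+v)>0$ and $\Re(u-v)>0$. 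Then both of your $L$-values already lie on the convergent side, so your ``reflection to the side of larger absolute convergence'' yields no factors $|t|^{|\Re(u\pm v)|}$ at all; the shift dependence sits instead in the four Gamma factors, which after the Hoffstein--Lockhart cancellation leave a factor $\asymp r_j^{\Re(u+v)+\Re(u-v)}$ in each term that your accounting never records. Cauchy--Schwarz carries this weight into both second moments, and since the Weyl law puts $\asymp |t|^2$ forms (not $|t|$, as you assert) in $|r_j|\le|t|$ against a residual polynomial weight of only $\asymp r_j^{-1}$, your route gives $|t|^{1+|\Re(u+v)|+|\Re(u-v)|+\eps}$ --- twice the claimed shift exponent --- and no sharpening of the second-moment input can repair this, because on the convergent side the $L$-values are already of average size $\asymp 1$. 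Your bookkeeping for the unfavorable sign is also off: reflecting $|L(\thalf+u+v,\overline{f_j})|^2$ across the functional equation costs the \emph{square} of the Gamma quotient, i.e.\ $r_j^{4|\Re(u+v)|}$, not $r_j^{|\Re(u+v)|}$. Both defects disappear only if you symmetrize first as in \eqref{symmetric_Ldisc_term}; after that step the paper passes, via H\"{o}lder in \eqref{cauchyschwartz}, to fourth moments estimated by the approximate functional equation and the spectral large sieve, and that is where the claimed exponent comes from.
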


\noindent
Showing that the last of these bounds holds is much more involved than the other three. It uses a ``spectral large sieve'' as in e.g.\ \cite[Prop 4.1]{HH}, and is often described as essentially the Lindel\"{o}f hypothesis on average in settings where $\Re(u) = \Re(v) = 0$. %should also mention others, e.g. Good, Jutila, Deshouillers, Nordentoft, HL20, Michel, DFI?
%\begin{proof}[Proof of \cref{L_growth} \eqref{L_growth_R} -- \eqref{L_growth_cont}]
\\
\\
\cref{L_perron_R}, \ref{L_perron_V}, \ref{L_perron_cont}, and \ref{L_perron_disc} hold for $X,T \to \infty$ with all other quantities fixed and satisfying the hypotheses of \cref{maintheorem}.
\begin{lemma}\label{L_perron_R}
  \begin{flalign*}
    \hspace{2cm} \int_{\half + \eps - iT}^{\half+\eps + iT} L_k^{(R)}(s)\,X^s\frac{ds}{s} &\ll X^{|\Re(u+v)| + \eps} + X^\eps T^\eps + X^{\half + \eps} T^{-1}. &&
  \end{flalign*}
\end{lemma}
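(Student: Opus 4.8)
The plan is to prove \cref{L_perron_R} by a contour shift, the two inputs being the explicit form of $L_k^{(R)}$ in \cref{L_parts_def} and the uniform bound $L_k^{(R)}(s)\ll 1$ from \cref{L_growth_R}. First I would pin down the poles of $L_k^{(R)}(s)$. In both summands the completed $L$-values and Gauss-sum ratios are constants in $s$, and the remaining $k$-dependent factors are entire, so every pole comes from the ratio of Gamma factors. In the first summand $\Gamma(\tfrac{s+u+v}{2})$ cancels between numerator and denominator, and in the second summand $\Gamma(\tfrac{s-u-v}{2})$ cancels; since $1/\Gamma$ is entire, the surviving numerator factors $\Gamma(\tfrac{s-1-u-v}{2})$, $\Gamma(\tfrac{s-1+u+v}{2})$, and $\Gamma(s)$ place all poles at $s=1+u+v-2m$, $s=1-u-v-2m$, and $s=-m$ for integers $m\ge 0$. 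The only poles with positive real part are $s=1+u+v$ and $s=1-u-v$ (the $m=0$ terms); the admissibility hypothesis $\eps<\half-|\Re(u)|-|\Re(v)|\le\half-|\Re(u+v)|$ forces $\Re\big(1\pm(u+v)\big)=1\pm\Re(u+v)>\half+\eps$, while all other poles have real part $\le 0$. Hence $L_k^{(R)}$ is holomorphic on the closed strip $\eps\le\Re(s)\le\half+\eps$.

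With this analyticity I would deform the segment from $\Re(s)=\half+\eps$ onto a broken contour $\mathcal C$ that runs along $\Re(s)=\eps$ for $1\le|t|\le T$ but bulges out to $\Re(s)=|\Re(u+v)|+\eps$ for $|t|\le 1$, the two levels being joined by short horizontal segments at $|t|=1$. Reconnecting the top and bottom of $\mathcal C$ to the endpoints $\half+\eps\pm iT$ by horizontal segments $H_{\pm T}$ running from $\Re(s)=\eps$ to $\Re(s)=\half+\eps$, and noting that $\mathcal C$ avoids $s=0$ because $\Re(s)\ge\eps>0$ throughout, Cauchy's theorem gives
$$\int_{\half+\eps-iT}^{\half+\eps+iT} L_k^{(R)}(s)X^s\frac{ds}{s}=\int_{\mathcal C}L_k^{(R)}(s)X^s\frac{ds}{s}+\int_{H_T}L_k^{(R)}(s)X^s\frac{ds}{s}+\int_{H_{-T}}L_k^{(R)}(s)X^s\frac{ds}{s},$$
with the orientations of $H_{\pm T}$ absorbing a harmless sign.

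Then I would estimate each piece using $|L_k^{(R)}(s)|\ll 1$ (from \cref{L_growth_R} where $|t|$ is large, and from holomorphy on the compact, pole-free remainder where $|t|$ is bounded) together with $|X^s|=X^{\Re(s)}$. The bulge $\Re(s)=|\Re(u+v)|+\eps$, $|t|\le 1$, and the horizontal joins at $|t|=1$ (where $|s|\asymp 1$) contribute $\ll X^{|\Re(u+v)|+\eps}$; the portion $\Re(s)=\eps$, $1\le|t|\le T$ contributes $\ll X^{\eps}\int_1^T\tfrac{dt}{t}\ll X^{\eps}\log T\ll X^{\eps}T^{\eps}$; and each outer segment $H_{\pm T}$, where $|s|\ge T$, contributes $\ll T^{-1}\int_{\eps}^{\half+\eps}X^{\sigma}\,d\sigma\ll X^{\half+\eps}T^{-1}$. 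Summing the three gives the asserted bound.

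I do not expect a serious obstacle here; the one point to get right is the pole bookkeeping, since the entire argument rests on the two poles $s=1\pm(u+v)$ — precisely the ones producing the main terms of \cref{maintheorem} — sitting strictly to the right of the whole contour, which is exactly what the range $\eps<\half-|\Re(u)|-|\Re(v)|$ secures. I would remark that for $L_k^{(R)}$ in isolation one could push the entire contour to $\Re(s)=\eps$ and omit the $X^{|\Re(u+v)|+\eps}$ term; retaining the bulge yields the (slightly weaker) stated bound, whose shape is the one that combines uniformly with the companion estimates \cref{L_perron_V}, \ref{L_perron_cont}, and \ref{L_perron_disc}.
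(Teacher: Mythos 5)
Your proof is correct, and it shares the paper's overall skeleton (shift the contour from $\Re(s)=\half+\eps$ to $\Re(s)=\eps$, estimate everything against \cref{L_growth_R}), but it diverges from the paper's execution in two substantive ways. First, the pole bookkeeping: the paper's proof asserts that $L_k^{(R)}(s)$ has poles of real part $|\Re(u+v)|$ in the strip $\eps\le\Re(s)\le\half+\eps$, excises them, and recovers the term $X^{|\Re(u+v)|+\eps}$ from their residues. Your computation --- poles only at $s=1\pm(u+v)-2m$ and $s=-m$, hence none at all in the closed strip --- is the correct one: the only poles with positive real part are the main-term poles $s=1\pm(u+v)$ of \cref{maintheorem}, which the hypothesis $\eps<\half-|\Re(u)|-|\Re(v)|$ places strictly to the right of the contour. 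So your closing remark is right that the bulge is dispensable and the $X^{|\Re(u+v)|+\eps}$ term superfluous; the paper's bound is simply lossy there, harmlessly so. Second, the horizontal segments: the paper bounds $I_{\text{top}}$ and $I_{\text{bottom}}$ via the Phragm\'{e}n--Lindel\"{o}f principle (after subtracting the claimed pole parts), precisely because \cref{L_growth_R} is stated only on fixed vertical lines with the implied constant depending on $\sigma$. You instead invoke \cref{L_growth_R} directly on the horizontals, which tacitly requires that constant to be uniform for $\sigma\in[\eps,\half+\eps]$. That uniformity is true --- the proof of \cref{L_growth_R} is Stirling's formula, which is uniform on closed vertical strips away from poles --- but it is not literally what the lemma states, so you should record the uniform version (or interpolate as the paper does); this is the one point to tighten, and it is exactly the gap the paper's Phragm\'{e}n--Lindel\"{o}f step exists to fill. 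Net comparison: your route is more elementary (no Phragm\'{e}n--Lindel\"{o}f, no pole excision) and yields the sharper observation that the first term of the stated bound can be dropped, while the paper's route never needs any uniformity in $\sigma$ beyond the two boundary lines.
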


\begin{lemma}\label{L_perron_V}
  \begin{flalign*}
    \hspace{2cm} \int_{\half + \eps - iT}^{\half+\eps + iT} L_k^{(V)}(s)\,X^s\frac{ds}{s} &\ll X^{|\Re(u-v)| + \eps} + X^{-\Re(u+v) + \eps} &&\\
    &+ X^\eps \left(T^{\half+\Re(u)} + T^{\half+\Re(v)}\right) + X^{\half + \eps}\left(T^{-\half+\Re(u)} + T^{-\half+\Re(v)}\right). &&
  \end{flalign*}
\end{lemma}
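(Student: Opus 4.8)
The plan is to exploit that $L_k^{(V)}$ is the most elementary of the four pieces: unlike $L_k^{(\text{cont})}$ and $L_k^{(\text{disc})}$ it contains no $L$-function depending on $s$. After cancelling $\Gamma\!\left(\frac{s+u+v}{2}\right)\Gamma\!\left(\frac{s+u-v}{2}\right)$ in the first summand and $\Gamma\!\left(\frac{s+u+v}{2}\right)\Gamma\!\left(\frac{s-u+v}{2}\right)$ in the second, each term of $L_k^{(V)}(s)$ is a constant times $k^{-s}$ times a pure ratio of gamma functions, namely $\frac{\Gamma(s)}{\Gamma\left(\frac{s-u+v}{2}\right)\Gamma\left(\frac{s-u-v}{2}\right)}$ and $\frac{\Gamma(s)}{\Gamma\left(\frac{s+u-v}{2}\right)\Gamma\left(\frac{s-u-v}{2}\right)}$. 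Consequently the integrand $L_k^{(V)}(s)X^s/s$ is meromorphic with poles only at $s=0,-1,-2,\dots$ (coming from $\Gamma(s)/s$), and \cref{L_growth_V} — which is just Stirling applied to these ratios — supplies the vertical growth $|t|^{\half+\Re(u)}$ and $|t|^{\half+\Re(v)}$ of the two summands on any fixed line.

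I would treat the two summands separately and, for each, move the contour of $\int_{\half+\eps-iT}^{\half+\eps+iT}$ to the left to the line $\Re(s)=\eps$, which lies in the region of holomorphy. The standard Perron bookkeeping then splits the estimate into three pieces. On the new vertical line, \cref{L_growth_V} together with $\tfrac{1}{|s|}\ll\tfrac{1}{1+|t|}$ gives $X^\eps\int_{-T}^{T}\frac{|t|^{\half+\Re(u)}}{1+|t|}\,dt\ll X^\eps T^{\half+\Re(u)}$, and likewise with $v$, producing $X^\eps\!\left(T^{\half+\Re(u)}+T^{\half+\Re(v)}\right)$. On the two horizontal segments at height $\pm T$, where $X^\sigma\le X^{\half+\eps}$ and the integrand is of size $T^{\half+\Re(u)}/T$, one gets $X^{\half+\eps}\!\left(T^{-\half+\Re(u)}+T^{-\half+\Re(v)}\right)$. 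The remaining, $T$-independent contribution — the main term that survives as $T\to\infty$ — is of size $X^{|\Re(u-v)|+\eps}+X^{-\Re(u+v)+\eps}$; its exponents are exactly the real parts of the shifts $\pm(u-v)$ and $-(u+v)$ that appear in the $K$-Bessel terms underlying the two summands (the first two terms of \cref{V_coeff_k_mellin}). Adding the three pieces yields the stated bound.

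I expect the genuinely delicate step to be this last one: correctly isolating the constant-in-$T$ contribution and pinning its size as $X^{|\Re(u-v)|+\eps}+X^{-\Re(u+v)+\eps}$, since the relevant powers of $X$ that appear depend on the signs of $\Re(u\pm v)$ and require care near the double pole at $s=0$. Everything else is mechanical once \cref{L_growth_V} is available. This is nonetheless the easiest of the four Perron estimates, as it needs neither a subconvexity bound nor a spectral large sieve — only Stirling's formula and the absence of any $s$-dependent $L$-factor — and the argument runs parallel to that of \cref{L_perron_R}, with \cref{L_growth_V} used in place of \cref{L_growth_R}.
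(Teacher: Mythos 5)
Your contour-shift skeleton is exactly the paper's: move the integral to $\Re(s)=\eps$, use \cref{L_growth_V} on the new vertical line to get $X^\eps\bigl(T^{\half+\Re(u)}+T^{\half+\Re(v)}\bigr)$, and bound the horizontal segments by $X^{\half+\eps}\bigl(T^{-\half+\Re(u)}+T^{-\half+\Re(v)}\bigr)$ (the paper does this last step via the Phragm\'{e}n--Lindel\"{o}f principle, you do it by Stirling directly; both work for this elementary gamma ratio). Your simplification of $L_k^{(V)}$ to constants times $k^{-s}\,\Gamma(s)\big/\bigl(\Gamma(\tfrac{s-u+v}{2})\Gamma(\tfrac{s-u-v}{2})\bigr)$ and $k^{-s}\,\Gamma(s)\big/\bigl(\Gamma(\tfrac{s+u-v}{2})\Gamma(\tfrac{s-u-v}{2})\bigr)$ is also correct, as is the conclusion you draw from it: the only possible poles are at $s=0,-1,-2,\dots$, since $1/\Gamma$ is entire.

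The gap is your third step, and it is a genuine one: having established that $L_k^{(V)}$ is holomorphic in $\Re(s)>0$, there is no ``remaining, $T$-independent contribution'' at all. The rectangle with vertical edges $\Re(s)=\eps$ and $\Re(s)=\half+\eps$ contains no poles of $L_k^{(V)}(s)X^s/s$ --- in particular $s=0$ lies strictly outside it, so the ``double pole at $s=0$'' you invoke cannot enter --- and Cauchy's theorem expresses the right-edge integral exactly as the left edge plus the two horizontal pieces. The step you single out as ``genuinely delicate,'' isolating a surviving main term of size $X^{|\Re(u-v)|+\eps}+X^{-\Re(u+v)+\eps}$, would therefore fail if you attempted it: there is nothing to isolate, and no argument can manufacture those terms from this contour. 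What saves your write-up is that those terms are not needed: the three pieces you do control already prove the lemma, indeed a stronger bound with the first two terms deleted. For context, the exponents $|\Re(u-v)|$ and $-\Re(u+v)$ are the real parts of the rightmost poles of the \emph{uncancelled} gamma factors $\Gamma(\tfrac{s+u+v}{2})$, $\Gamma(\tfrac{s+u-v}{2})$, $\Gamma(\tfrac{s-u+v}{2})$ appearing in \cref{V_coeff_k_mellin} (at $s=-(u+v)$ and $s=\pm(u-v)$); this is evidently how the lemma's statement was written, and the paper reasons the same way in its own proof --- its argument for \cref{L_perron_R} excises ``poles'' of $L_k^{(R)}$ at real part $|\Re(u+v)|$, which are likewise removable after cancellation --- but once the cancellation you correctly performed is made, these are removable singularities, not poles, and contribute no residues. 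So: your operative estimates are right and coincide with the paper's; replace the fictitious main-term step by the observation that the enclosed region is pole-free, and the proof is complete.
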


\begin{lemma}\label{L_perron_cont}
  \begin{flalign*}
    \hspace{2cm} \int_{\half + \eps - iT}^{\half+\eps + iT} L_k^{\text{\emph{(cont)}}}(s)\,X^s\frac{ds}{s} \ll \left(X^\half T^\half + X^\eps T + X^{\half + \eps}\right)T^\eps. &&
  \end{flalign*}
\end{lemma}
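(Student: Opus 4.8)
The plan is to substitute the defining $\omega$-integral for $L_k^{(\text{cont})}$ from \cref{L_parts_def}, interchange the order of the $\omega$- and $s$-integrations, and then shift each inner $s$-contour past the critical line, where the Gamma factors contribute poles. First I would justify the interchange: the four completed $L$-functions in the numerator and the two in the denominator of the $\omega$-integrand contribute Gamma factors whose exponential rates cancel, while the ratio $\Gamma(\tfrac{s-\half+i\omega}{2})\Gamma(\tfrac{s-\half-i\omega}{2})\Gamma(s)/\bigl(\Gamma(\tfrac{s+u+v}{2})\Gamma(\tfrac{s+u-v}{2})\Gamma(\tfrac{s-u+v}{2})\Gamma(\tfrac{s-u-v}{2})\bigr)$ decays like $e^{-\frac{\pi}{2}(|\omega|-|t|)}$ for $|\omega|>|t|$, confining the effective range to $|\omega|\lesssim|t|\le T$ and giving absolute convergence of the double integral.

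For each fixed real $\omega$ the $s$-integrand is meromorphic, and its only poles in the strip $\eps\le\Re(s)\le\half+\eps$ are the simple poles of $\Gamma(\tfrac{s-\half-i\omega}{2})$ and $\Gamma(\tfrac{s-\half+i\omega}{2})$ at $s=\half+i\omega$ and $s=\half-i\omega$, both lying on $\Re(s)=\half$. I would shift the inner contour from $\Re(s)=\half+\eps$ to $\Re(s)=\eps$ by Cauchy's theorem, collecting these two residues (which appear only when $|\omega|<T$, so that the pole lies inside the truncated rectangle) together with the two horizontal segments at $\Im(s)=\pm T$. After integrating back over $\omega$, this expresses the integral as the sum of a residual term, the shifted vertical integral $\int_{(\eps)}L_k^{(\text{cont})}(s)X^s\,\tfrac{ds}{s}$ (with $L_k^{(\text{cont})}$ holomorphic on $\Re(s)=\eps$, since the $\omega$-poles stay off the real axis for $0<\Re(s)<\half$), and horizontal contributions.

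The three claimed terms arise one from each piece. On the residual term $|X^s|=X^{\half}$; computing the residue (the Gamma factor contributes $2$, and the remaining Gammas and $L$-functions are evaluated at $s=\half\pm i\omega$), Stirling shows the completed-$L$ Gamma factors again cancel, and the convexity bound $L(\half+i\omega,\,\cdot\,)\ll|\omega|^{1/4+\eps}$ for the four Dirichlet $L$-functions bounds the $\omega$-integrand by $X^{\half}|\omega|^{-\half+\eps}$; integrating over $|\omega|<T$ gives $\ll X^{\half}T^{\half+\eps}$. The shifted vertical integral is bounded using \cref{L_growth_cont} on $\Re(s)=\eps$ together with $|X^s/s|\ll X^{\eps}/|t|$, yielding $\ll X^{\eps}\int_1^T|t|^{\eps}\,dt\ll X^{\eps}T^{1+\eps}$. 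On the horizontals $|1/s|\ll T^{-1}$ and $L_k^{(\text{cont})}\ll T^{1+\eps}$, so these contribute $\ll T^{\eps}\int_{\eps}^{\half+\eps}X^{\sigma}\,d\sigma\ll X^{\half+\eps}T^{\eps}$. Summing the three gives $X^{\half}T^{\half+\eps}+X^{\eps}T^{1+\eps}+X^{\half+\eps}T^{\eps}$, which is the asserted bound.

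The main obstacle will be the residual term: one must pass to the interchanged representation to see that the relevant poles lie exactly on $\Re(s)=\half$ (so that they contribute $X^{\half}$ rather than $X^{\half+\eps}$), and then bound the residual $\omega$-integral sharply enough---using both the cancellation of the completed Gamma factors and the convexity bounds---to obtain the decay $|\omega|^{-\half+\eps}$ that produces $T^{\half+\eps}$ and not a larger power of $T$. The remaining technical points are justifying the interchange, the holomorphy of $L_k^{(\text{cont})}$ on the shifted line, and a harmless indentation of the $\omega$-contour to handle the single value $\Re(s)=\half$ where the horizontal segments cross the poles.
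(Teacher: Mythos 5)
Your proposal is correct and is essentially the paper's own argument: shift the Perron contour from $\Re(s) = \thalf + \eps$ to $\Re(s) = \eps$, collect the poles of the $\Gamma$-factors on $\Re(s) = \thalf$ (bounding their residues by the Stirling-plus-convexity computation of \cref{L_growth_cont}, with an extra factor of $|t|^{\half}$ coming from the $\Gamma$-function that has the pole), bound the left vertical line via \cref{L_growth_cont}, and bound the horizontal segments after a slight deformation to avoid poles sitting on them, so your Fubini/per-$\omega$ realization is just a more explicit implementation of the same computation. The one point to add is that near $\omega = 0$ the residues at $s = \thalf + i\omega$ and $s = \thalf - i\omega$ must be kept paired: each involves $\Gamma(\pm i\omega)$ and is of size $|\omega|^{-1}$ individually (Stirling does not apply there), and only their sum --- which is what survives the collision of the two poles, i.e.\ the pinch producing the $X^{\half}$ contribution --- obeys your claimed integrable bound $X^{\half}|\omega|^{-\half + \eps}$.
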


\begin{lemma}\label{L_perron_disc}
  \begin{flalign*}
    \hspace{2cm} \int_{\half + \eps - iT}^{\half+\eps + iT} L_k^{\text{\emph{(disc)}}}(s)\,X^s\frac{ds}{s} \ll \left(X^\half T^\half + X^\eps T + X^{\half + \eps}\right)T^{\frac{|\Re(u+v)| + |\Re(u-v)|}{2} + \eps}. &&
  \end{flalign*}
\end{lemma}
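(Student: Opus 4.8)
The plan is to prove \cref{L_perron_disc} by the same contour-shift argument that underlies \cref{L_perron_cont}, feeding in the growth estimate \cref{L_growth_disc} (in place of \cref{L_growth_cont}) together with the spectral large sieve. Throughout, write $\alpha = \tfrac12\big(|\Re(u+v)| + |\Re(u-v)|\big)$, so that \cref{L_growth_disc} reads $L_k^{(\text{disc})}(s) \ll |t|^{1+\alpha+\eps}$ on any vertical line in the strip $0 < \Re(s) < \half$ that avoids the poles of $L_k^{(\text{disc})}$. The strategy is to move the line of integration leftward so that the three target terms emerge respectively from the shifted vertical integral, the horizontal connecting segments, and the residues crossed.

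First I would record the pole structure of $L_k^{(\text{disc})}(s)$ in the strip $\eps \le \Re(s) \le \half+\eps$. Since the two values $L(\thalf+u+v,\overline{f_j})$ and $L(\thalf+u-v,\psi\times\overline{f_j})$ do not depend on $s$, the only $s$-poles come from the Gamma ratio: the factor $\Gamma(s)$ gives a pole at $s=0$, which sits to the \emph{left} of $\Re(s)=\eps$ and is harmless, while $\Gamma\!\big(\tfrac{s-\half\pm ir_j}{2}\big)$ gives poles at $s = \half \mp ir_j$, lying on $\Re(s)=\half$ for the tempered spectrum. (An exceptional eigenvalue $r_j = it_j$ would produce a real pole at $\half - t_j$, whose residue is $\ll X^{\half - t_j}\le X^\half$ and is absorbed; for our prime level the relevant space may in any case contain no exceptional forms.) Shifting the contour from $\Re(s)=\half+\eps$ to $\Re(s)=\eps$ and invoking the residue theorem then writes the integral as the shifted vertical integral on $\Re(s)=\eps$, plus the horizontal segments at $\Im(s)=\pm T$, plus $2\pi i$ times the residues at $s=\half\pm ir_j$ summed over the (finitely many) $f_j$ with $|r_j|<T$.

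Two of the three pieces are immediate. On $\Re(s)=\eps$ we have $|X^s|=X^\eps$, and since $\tfrac1{|s|}\asymp\tfrac1{|t|}$ absorbs one power of $t$ from $L_k^{(\text{disc})}\ll|t|^{1+\alpha+\eps}$, the contribution is $\ll X^\eps\int_1^T t^{\alpha+\eps}\,dt \ll X^\eps T^{1+\alpha+\eps}$, which is the term $X^\eps T\cdot T^{\alpha+\eps}$. On the horizontal segments, $|L_k^{(\text{disc})}(\sigma\pm iT)|\ll T^{1+\alpha+\eps}$ uniformly for $\sigma\in[\eps,\half+\eps]$ and $\tfrac1{|s|}\asymp\tfrac1T$, so the contribution is $\ll T^{\alpha+\eps}\int_\eps^{\half+\eps}X^\sigma\,d\sigma \ll X^{\half+\eps}T^{\alpha+\eps}$, the term $X^{\half+\eps}T^{\alpha+\eps}$. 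These steps only require that the shift itself be legitimate, i.e.\ that the spectral series for $L_k^{(\text{disc})}$ converge uniformly on the strip so that it may be integrated term by term; this follows from the exponential decay $e^{-\pi|r_j|}$ of the Gamma ratio once $|r_j|\gg|t|$, which effectively truncates the sum to $|r_j|\lesssim T$.

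The main obstacle is the residue sum, which must produce exactly the remaining term $X^\half T^\half\cdot T^{\alpha+\eps}$. At $s=\half-ir_j$ the residue of $\Gamma\!\big(\tfrac{s-\half+ir_j}{2}\big)$ is $2$, and upon this substitution two of the four denominator factors $\Gamma\!\big(\tfrac{s\pm u\pm v}{2}\big)$ cancel against the spectral Gamma factors $\Gamma\!\big(\tfrac{\pm(u\pm v)+\half - ir_j}{2}\big)$ carried in the coefficient; the surviving Gamma factors, combined with the Kuznetsov normalization $|\rho_j(1)|^2/\langle f_j,f_j\rangle \asymp \cosh(\pi r_j)\,r_j^{o(1)}$, leave a residue of \emph{polynomial} size in $r_j$, the $e^{-\pi|r_j|}$ from the Gammas cancelling the $\cosh(\pi r_j)\sim e^{\pi|r_j|}$, weighted by $X^{\half\mp ir_j}/(\half\mp ir_j)$. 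One is left to bound a sum of the shape $\sum_{|r_j|<T}\omega_j\,a_j(k)\,L(\thalf+u+v,\overline{f_j})\,L(\thalf+u-v,\psi\times\overline{f_j})\,\tfrac{X^{\half\mp ir_j}}{\half\mp ir_j}$, with $\omega_j$ the usual spectral weight, by Cauchy--Schwarz followed by the spectral large sieve for $\{a_j(k)\}$ and a ``Lindel\"of on average'' bound for the shifted $L$-values, exactly as in \cite[Prop.\ 4.1]{HH}. The delicate point is to push the $u,v$-dependent Gamma factors through Stirling's formula carefully enough that they contribute precisely $T^{\frac{|\Re(u+v)|+|\Re(u-v)|}{2}+\eps}$ and no more; this is the same spectral input that drives \cref{L_growth_disc}, which is exactly why the two lemmas carry the identical extra factor over their continuous-spectrum analogues. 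Collecting the three contributions then yields the stated bound.
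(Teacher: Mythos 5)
Your overall strategy is exactly the paper's: shift the contour from $\Re(s)=\half+\eps$ to $\Re(s)=\eps$, bound the new vertical integral with \cref{L_growth_disc}, identify the poles at $\Re(s)=\half$ coming from $\Gamma\!\left(\frac{s-\half\pm ir_j}{2}\right)$, and bound the residue sum by the same Cauchy--Schwarz/spectral-large-sieve computation that proves \cref{L_growth_disc}, picking up the extra $X^\half$ from $|X^{\half\mp ir_j}|$ and the extra $T^\half$ relative to that lemma (the paper attributes this to the polar $\Gamma$-factor being replaced by its residue rather than estimated via Stirling's formula). Your accounting of which Gamma factors cancel at $s=\half\mp ir_j$, the residue value $2$, the cancellation of $e^{-\pi|r_j|}$ against $\cosh(\pi r_j)$ from the normalization of $|\rho_j(1)|^2/\langle f_j,f_j\rangle$, and your aside on exceptional eigenvalues are all consistent with, and more explicit than, the paper's one-paragraph proof.

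The one step that fails as written is the treatment of the horizontal segments. You assert that $|L_k^{(\text{disc})}(\sigma\pm iT)|\ll T^{1+\alpha+\eps}$ holds uniformly for $\sigma\in[\eps,\half+\eps]$, where $\alpha=\tfrac12\left(|\Re(u+v)|+|\Re(u-v)|\right)$. But \cref{L_growth_disc} is a statement about fixed vertical lines on which $L_k^{(\text{disc})}$ is holomorphic; it cannot simply be transported to a horizontal segment that crosses the line $\Re(s)=\half$, which is exactly where the poles $s=\half\mp ir_j$ live. By the Weyl law there are roughly $T$ eigenvalues with $|r_j|\in[T,T+1]$, so near height $T$ these poles have spacing on the order of $1/T$: for typical $T$ the segment passes within $O(1/T)$ of a pole (and for $T=|r_j|$, directly through one), so no uniform bound of the claimed shape can hold for arbitrary $T$. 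This is precisely the point the paper's proof addresses by applying the Phragm\'{e}n--Lindel\"{o}f principle to $L_k^{(\text{disc})}$ minus its polar parts (as in the proofs of \cref{L_perron_R} and \cref{L_perron_V}), together with a slight deformation of the horizontal contours, the Weyl law guaranteeing that the excised poles contribute within the stated bound. Supplying that step --- rather than the unsupported uniformity claim --- completes your argument.
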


\begin{proof}[Proof of \cref{L_growth_R} and \cref{L_growth_V}]
  These follow immediately from Stirling's formula \cite[8.328.1]{GR}.
\end{proof}

\begin{proof}[Proof of \cref{L_growth_cont}]
This follows from a calculation that's very similar to the ones in \cite[Lemma 5.2]{TV}, \cite[Thm 2.1]{NPR}, and \cite[\S 5]{HLN}. Let's first consider the region of the integral where $|u|, |v| < |\omega| < |t|$. Here the exponential factor in Stirling's approximation is identically $1$. The polynomial factor is
\begin{align}\label{continuous_stirling_polynomial_part}
  2^C |t|^{\sigma-\half}|t+\omega|^{\frac{\sigma}{2}-\frac{3}{4}}|t-\omega|^{\frac{\sigma}{2}-\frac{3}{4}} \left(\prod_{\pm} |\omega + \Im(\pm u \pm v)|^{\frac{\Re(\pm u \pm v)}{2} - \frac{1}{4}}\right) \left(\prod_{\pm} |t + \Im(\pm u \pm v)|^{\half - \frac{\sigma+\Re(\pm u \pm v)}{2}}\right)
\end{align}
for some $C$ which depends only on $\sigma$ and $\Re(\pm u \pm v)$, and the products are over $\{(+u,+v), (+u,-v), (-u,+v), (-u,-v)\}$. As $|\omega|, |t| \to \infty$, the quantity above is bounded as
\begin{align*}
  \text{(\ref{continuous_stirling_polynomial_part})} \ll |t|^{\frac{3}{2} - \sigma}|t + \omega|^{\frac{\sigma}{2} - \frac{3}{4}}|t - \omega|^{\frac{\sigma}{2} - \frac{3}{4}} |\omega|^{-1}.
\end{align*}
The $L$-functions appearing in the integrand of $L_k^{(\text{cont})}(s)$ are
\begin{align*}
  \frac{\zeta(\half+i\omega+u+v)L(\half+i\omega-u+v,\chi)L(\half+i\omega+u-v,\psi)L(\half+i\omega-u-v,\chi\psi)}{L(1 + 2i\omega,\chi\psi)L(1-2i\omega,\overline{\chi\psi})}.
\end{align*}
Using the convexity bound for the $L$-functions in the numerator and the bound $L(1+2i\omega,\chi\psi) \gg |\omega|^{-\eps}$ \cite[p. 542]{DFI} for the $L$-functions in the denominator shows that the integrand of $L_k^{(\text{cont})}(s)$ is $\ll |t|^{\frac{3}{2} - \sigma}|t + \omega|^{\frac{\sigma}{2} - \frac{3}{4}}|t - \omega|^{\frac{\sigma}{2} - \frac{3}{4}}|\omega|^\eps$. Integrating this bound over the region $|\omega| < |t|$ this is $\ll |t|^{1+\eps}$.\\
\\
We note that it should be possible to improve this bound by using some spectral large sieve result instead of the convexity bound here, but in our situation the error term from the discrete spectrum is just as large, so we aren't impacted by the use of this coarse bound.\\
\\
In the region $|\omega| > |t|$, Stirling's formula has exponential decay of order $\exp[-\pi\tfrac{|\omega| - |t|}{2}]$. Using the bounds above for the size of the integrand, one sees that the integral over the region $|\omega| > |t|$ is also $\ll |t|^{1+\eps}$. Some attention is needed when $|\omega| \approx |t|$, but this is routine and discussed thoroughly in \cite[Lemma 5.2]{TV}, \cite[Thm 2.1]{NPR}, and \cite[\S 5]{HLN}. Since we aren't tracking the dependence on variables other than $|t|$, we also don't need to investigate the regions where $|\omega|$ or $|t|$ is less than $|u|$ or $|v|$.
\end{proof}

\begin{proof}[Proof of \cref{L_growth_disc}]
  The Weyl law in the present setting states that there are about $|t|$ Maass forms with eigenvalue between $|t|$ and $|t+1|$; see \cite[\S 7.7]{IK} or the appendix of \cite{HH}. Consequently, the approach we used to bound the continuous spectrum's contribution \cref{L_growth_cont}, when used for the discrete spectrum, gives a bound of $|t|^{2+\eps}$. To prove \cref{L_growth_disc}, we avoid using the convexity bound as above, and instead use a ``spectral large sieve'' like in \cite[Prop 4.1]{HH} or \cite[Lemma 3.10]{HL20}.\\
  \\
  We will need the estimate
\begin{align}\label{HL_bound}
  |r_j|^{-\eps} \ll \frac{|\rho_j(1)|^2}{\langle f_j, f_j \rangle \cosh(\pi r_j)} \ll |r_j|^\eps
\end{align}
from \cite{HL}, as well as the fact that the analytic conductor of $L(s,f_j)$ is on the order of $r_j^2$ \cite[\S 5.9]{IK}.\\
\\
We break $L_k^{(\text{disc})}(s)$ into four parts to consider separately, with each omitting any $r_j$'s from previous parts:
\begin{enumerate}
\item $|r_j| \ll 1 + |\Im(u)| + |\Im(v)|$,
\item $\big||r_j| - |t|\big| < 1$,
%\item $n|t| < |r_j| < (n+1)|t|$ for $n = 1, 2, 3, \dots$, and
\item $|r_j| > |t| + 1$
\item $|r_j| < |t| - 1$.
\end{enumerate}
This approach is similar to \cite[Thm 2.1]{NPR}, \cite[\S 5]{jutila}, \cite[\S 3.4]{HL20}, and \cite[\S 5]{HLN}.\\
\\
The first three of these parts are straightforward to handle. For each of the finite number of $r_j$ satisfying $|r_j| \ll 1 + |\Im(u)| + |\Im(v)|$, Stirling's formula and \eqref{HL_bound} show that the corresponding term in $L_k^{(\text{disc})}(s)$ is bounded as $t \to \infty$. The convexity bound \cite[(5.91)]{IK} implies that the sum over terms for which $\big||r_j| - |t|\big| < 1$ will be less than the bound in \cref{L_growth_disc}. When $|r_j| > |t| + 1$, the corresponding term has exponential decay of order $\exp[-\pi\tfrac{|r_j| - |t|}{2}]$, so these will also sum to less than the bound in \ref{L_growth_disc}.\\
\\
We now bound the sum over $|r_j| < |t| - 1$. The $L$-function $L(\thalf + u + v, \overline{f_j})$ satisfies the functional equation \cite[(5.4) and p.\ 132]{IK}
\begin{align*}
  \omega(\overline{f_j})^{-1}&\frac{N^{\frac{\half + u + v}{2}}}{\pi^{\half + u + v}}\Gamma\!\left(\frac{\half + u + v + ir_j}{2}\right)\Gamma\!\left(\frac{\half + u + v - ir_j}{2}\right)L(\thalf + u + v, \overline{f_j})\\
  =\,\,&\frac{N^{\frac{\half - u - v}{2}}}{\pi^{\half - u - v}}\Gamma\!\left(\frac{\half - u - v + ir_j}{2}\right)\Gamma\!\left(\frac{\half - u - v - ir_j}{2}\right)L(\thalf - u - v, \chi\psi \times \overline{f_j}).
\end{align*}
The value of the root number $\omega(\overline{f_j})$ that appears above is given in \cite[Prop.\ 8.1]{DFI}. We will use only the fact that it's a complex number of norm $1$.\\
\\
An analogous functional equation exists for $L(\thalf+u-v,\psi\times\overline{f_j})$. In this case, one might find it more straightforward to note that in the proof of \ref{discrete_spectrum_inner_product} we made the arbitrary choice of unfolding the factor $E_{\mathbbm{1},\bar\chi}^*\!\left(z,\thalf+u\right)$ in $V$ instead of the factor $E_{\mathbbm{1},\bar\psi}^*\!\left(z,\thalf+v\right)$, and therefore each term of the sum in \ref{discrete_spectrum_contribution} is invariant under the simultaneous substitutions $u \leftrightarrow v$ and $\chi \leftrightarrow \psi$.\\
\\
It follows from the two functional equations presented above that each term of \ref{discrete_spectrum_contribution} satisfies
\begin{align}\label{symmetric_Ldisc_term}
  &\left[\int_0^\infty \int_0^1 \frac{\langle V - R, f_j\rangle}{\langle f_j, f_j\rangle}f_j(z) \,e(-kx)y^{s-1}\frac{dxdy}{y}\right]^2\\
  &= \frac{\tau(\psi)}{\sqrt{N}}\frac{\tau(\psi)}{\sqrt{N}}\omega(\overline{f_j}) \left[\frac{|\rho_j(1)|^2 a_j(k)}{\pi^{\half} k^{s-\half}\langle f_j, f_j \rangle} \frac{\Gamma\!\left(\frac{s - \half + ir_j}{2}\right)\Gamma\!\left(\frac{s - \half - ir_j}{2}\right)\Gamma(s)}{\Gamma\!\left(\frac{s+u+v}{2}\right)\Gamma\!\left(\frac{s+u-v}{2}\right)\Gamma\!\left(\frac{s-u+v}{2}\right)\Gamma\!\left(\frac{s-u-v}{2}\right)}\right]^2\prod_{\pm} \,\Gamma\!\left(\frac{\half \pm u \pm v \pm ir_j}{2}\right)\nonumber\\
  &\hspace{2cm} \cdot L(\thalf+u+v,\overline{f_j}) L(\thalf+u-v,\psi\times\overline{f_j}) L(\thalf-u+v,\chi\times\overline{f_j}) L(\thalf-u-v,\chi\psi\times\overline{f_j}),\nonumber
\end{align}
where the product is over all $8$ possible sign combinations.\\
\\
The rest of the proof of \ref{L_growth_disc} for the terms with $|r_j| < |t| - 1$ closely follows \cite[Lemma 3.10]{HL20}. Stirling's formula and \eqref{HL_bound} imply that the quantity in \eqref{symmetric_Ldisc_term} is bounded by
\begin{align*}
  &\ll |t|^{3 - 2\sigma}|t + r_j|^{\sigma - \frac{3}{2}}|t - r_j|^{\sigma - \frac{3}{2}} |r_j|^{-2}\\
  &\hspace{3cm}\cdot L(\thalf+u+v,\overline{f_j}) L(\thalf+u-v,\psi\times\overline{f_j}) L(\thalf-u+v,\chi\times\overline{f_j}) L(\thalf-u-v,\chi\psi\times\overline{f_j})
\end{align*}
for $|r_j|$ and $|t|$ large compared to $|\Im(u)| + |\Im(v)|$.\\
\\
Using Cauchy-Schwartz,
\begin{align}
\nonumber  &\sum_{|r_j| < |t|-1} \bigg|\, |t|^{3 - 2\sigma}|t + r_j|^{\sigma - \frac{3}{2}}|t - r_j|^{\sigma - \frac{3}{2}} |r_j|^{-2}\\
\nonumber  &\hspace{3cm}\cdot L(\thalf+u+v,\overline{f_j}) L(\thalf+u-v,\psi\times\overline{f_j}) L(\thalf-u+v,\chi\times\overline{f_j}) L(\thalf-u-v,\chi\psi\times\overline{f_j})\,\bigg|^\half\\
\nonumber  &\ll \left(\sum_{|r_j| < |t|-1} |t|^{3 - 2\sigma}|t + r_j|^{\sigma - \frac{3}{2}}|t - r_j|^{\sigma - \frac{3}{2}} |r_j|^{-2}\right)^\half\\
\nonumber  &\hspace{1.5cm}\left(\sum_{|r_j| < |t|-1} \left|L(\thalf+u+v,\overline{f_j})\right| \left|L(\thalf+u-v,\psi\times\overline{f_j})\right| \left|L(\thalf-u+v,\chi\times\overline{f_j})\right| \left|L(\thalf-u-v,\chi\psi\times\overline{f_j})\right|\right)^\half\\
\label{cauchyschwartz}  &\ll |t|^{\eps} \left(\sum_{|r_j| < |t|-1} \left|L(\thalf+u+v,\overline{f_j})\right|^4\right)^{\frac{1}{8}} \left(\sum_{|r_j| < |t|-1} \left|L(\thalf+u-v,\psi\times\overline{f_j})\right|^4\right)^{\frac{1}{8}}\\
\nonumber  &\hspace{1.5cm}\cdot\left(\sum_{|r_j| < |t|-1} \left|L(\thalf-u+v,\chi\times\overline{f_j})\right|^4\right)^{\frac{1}{8}} \left(\sum_{|r_j| < |t|-1} \left|L(\thalf-u-v,\chi\psi\times\overline{f_j})\right|^4\right)^{\frac{1}{8}}.
\end{align}
The approximate functional equation \cite[Thm.\ 5.3]{IK} gives an estimate for $L(s,*\times\overline{f_J})^2$, with $*$ any of $\chi$, $\psi$, $\chi\psi$, or the trivial character, as a sum of length $\ll r_j^2$. The error in this approximation is given by \cite[Prop.\ 5.4]{IK}, and is negligibly small when considered as a function only of $r_j$. Then, using Stirling's formula and the fact that $\overline{a_j(n)} = \overline{\chi\psi(n)}a_j(n)$ for $(n,N) = 1$ \cite[p.\ 132]{IK}, we find that the approximate functional equation implies that, for $0 < \sigma < 1$,
\begin{align}\label{eq:approximate_functional_equation}
  L(s,*\times\overline{f_j})^2 \ll \sum_{n=1}^{r_j^2} \left(n^{-\sigma} + |r_j|^{2-4\sigma}n^{\sigma-1}\right)b(n),
\end{align}
where $b(n)$ denotes the coefficient in the Dirichlet series of $L(s,*\times\overline{f_j})^2$, i.e.
\begin{align*}
  L(s,*\times\overline{f_j})^2 \eqqcolon \sum_{n=1}^\infty \frac{b(n)}{n^s}.
\end{align*}
Using the Hecke relations as given in \cref{sec:hecke_relations} and reasoning similarly to \cite[(3.75)]{HL20}, we have $b(n) \ll |r_j|^\eps$ for all $n \ll r_j^2$. We can now apply the spectral large sieve \cite[(3.74)]{HL20}:
\begin{align*}
  \sum_{|r_j| < |t|-1} \left|L(s,*\times\overline{f_j})^2\right|^2 &\ll \sum_{|r_j| < |t|} \left|\,\sum_{n=1}^{r_j^2} \left(n^{-\sigma} + |r_j|^{2-4\sigma}n^{\sigma-1}\right)b(n)\,\right|^2\quad\quad\text{(using \eqref{eq:approximate_functional_equation})}\\
  &\ll |t|^{2+\eps}\sum_{n=1}^{|t|^2} \left|n^{-\sigma} + |t|^{2-4\sigma}n^{\sigma-1}\right|^2\quad\quad\text{(invoking \cite[(3.74)]{HL20})}\\
  &\ll |t|^{2+\eps}\left(1 + |t|^{2-4\sigma + \eps}\right).
\end{align*}
Using this in \eqref{cauchyschwartz}, we obtain
\begin{align*}
  \eqref{cauchyschwartz} &\ll |t|^\eps \left(|t|^{8+\eps} \cdot |t|^{4|\Re(u+v)| + 4|\Re(u-v)|} \right)^{\frac{1}{8}}\\
  &\ll |t|^{1+\frac{|\Re(u+v)| + |\Re(u-v)|}{2} + \eps}.\qedhere
\end{align*}
\end{proof}

\begin{proof}[Proof of \cref{L_perron_R} and \cref{L_perron_V}]
  Set
  \begin{align*}
    I_{\text{right}} &\coloneqq \int_{\half + \eps - iT}^{\half + \eps + iT} L_k^{(R)}(s)\,X^s\frac{ds}{s} &%\\
    I_{\text{top}} &\coloneqq \int_{\half + \eps + iT}^{\eps + iT} L_k^{(R)}(s)\,X^s\frac{ds}{s}\\
    I_{\text{left}} &\coloneqq \int_{\eps + iT}^{\eps - iT} L_k^{(R)}(s)\,X^s\frac{ds}{s} &%\\
    I_{\text{bottom}} &\coloneqq \int_{\eps - iT}^{\half + \eps - iT} L_k^{(R)}(s)\,X^s\frac{ds}{s}.
  \end{align*}
  For $\eps < \thalf - |\Re(u)| - |\Re(v)|$, the only poles of $L_k^{(R)}(s)$ with real part between $\eps$ and $\thalf + \eps$ have real part $|\Re(u+v)|$.\\
  \\
  To bound $I_{\text{top}}$ and $I_{\text{bottom}}$, we use \cref{L_growth_R} and the Phragm\'{e}n--Lindel\"{o}f principle \cite[Thm.\ 8.2.1]{goldfeld_book}. We cannot apply the Phragm\'{e}n--Lindel\"{o}f principle to the integrand directly, as this integrand has poles in the relevant vertical strip. Let $s_1,\,\dots$ and $r_1,\,\dots$ denote the poles and associated residues of the integrand in this vertical strip. We subtract the function
  \begin{align}\label{excised_poles}
    \sum_i \frac{r_i}{s - s_i}
  \end{align}
  from the integrand. We apply the the Phragm\'{e}n--Lindel\"{o}f principle to this difference, which is now holomorphic. By inspection, the integral of \eqref{excised_poles} along the horizontal segments as well as \eqref{excised_poles}'s value at the endpoints are within the bound stated in \cref{L_perron_R}. The lemma then follows from Cauchy's residue theorem and \cref{L_growth_R} applied to $I_{\text{left}}$. The proof of \cref{L_perron_V} is similar.
\end{proof}

\begin{proof}[Proof of \cref{L_perron_cont} and \cref{L_perron_disc}]
  Just as in the proofs of \cref{L_perron_R} and \cref{L_perron_V} above, we shift the contour to the line $\Re(s) = \eps$ and use \cref{L_growth_cont} and \cref{L_growth_disc}. In both cases, the rightmost poles have real part $\thalf$. The sum of residues is bounded via the same calculations as those in the proofs of \cref{L_growth_cont} and \cref{L_growth_disc}, but with an extra factor of $|t|^\half$ coming from the $\Gamma$-function which has the pole; this $\Gamma$-function is absent when applying Stirling's formula. The horizontal segments are again bounded using the Phragm\'{e}n--Lindel\"{o}f principle, with the Weyl law ensuring that, possibly by deforming the horizontal contours slightly, the contribution from the poles that must be excised is within the bound of the lemma.
\end{proof}

\section{Partial sum asymptotics}
\noindent
\begin{definition}\label{Lstar_def}
  \begin{align*}
    &L_k^*(s) \coloneqq \sum_{n \neq 0,k}\frac{\sigma_{2u}(n,\chi)\sigma_{2v}(n-k,\psi)}{|n|^{s+u+v}}.
  \end{align*}
\end{definition}

\begin{lemma}\label{Lk_diff_holomorphic}
  The function $L_k^*(s) - L_k(s)$ is holomorphic for $\sigma > |\Re(u)| + |\Re(v)|$.
\end{lemma}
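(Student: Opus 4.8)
The plan is to produce a single series that represents $L_k^*(s)-L_k(s)$ and converges locally uniformly on the half-plane $\{\Re(s)>|\Re(u)|+|\Re(v)|\}$; holomorphy then follows at once, since a locally uniform limit of holomorphic functions is holomorphic. On the common region of absolute convergence $\Re(s)>1+|\Re(u)|+|\Re(v)|$ one may subtract the two defining series term by term to get
\begin{align*}
  L_k^*(s)-L_k(s) = \sum_{n\neq 0,k}\frac{\sigma_{2u}(n,\chi)\sigma_{2v}(n-k,\psi)}{|n|^{s+u+v}}\left(1 - {}_2F_1\!\left(\tfrac{s+u+v}{2},\tfrac{s+u-v}{2};s;\tfrac{2k}{n}-\tfrac{k^2}{n^2}\right)\right),
\end{align*}
and the goal is to show that this right-hand series already converges on the larger half-plane and furnishes the asserted continuation.

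First I would record two elementary facts about the hypergeometric argument $z_n \coloneqq \tfrac{2k}{n}-\tfrac{k^2}{n^2} = 1-\tfrac{(n-k)^2}{n^2}$. For $n>0$ one has $z_n \le 1-1/n^2<1$ because $n\neq k$, and for $n<0$ one has $z_n<0$; thus $z_n\in(-\infty,1)$ for every admissible $n$, and $z_n\to 0$ as $|n|\to\infty$. Since $z_n$ avoids the branch cut $[1,\infty)$, and $\Re(s)>0$ keeps the lower parameter $c=s$ away from $\{0,-1,-2,\dots\}$, each summand is holomorphic in $s$ throughout the target region.

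The decay of the summand comes from the vanishing of $1-{}_2F_1$ at the origin. Since ${}_2F_1(a,b;c;0)=1$, expanding the defining series and estimating its coefficients by Stirling's formula (they grow only polynomially in the summation index) gives, for $|z|\le\tfrac12$, a bound $1-{}_2F_1(a,b;c;z)\ll|z|$ whose implied constant is uniform for $(a,b,c)$ in any compact set on which $c$ stays away from the nonpositive integers. Taking $a=\tfrac{s+u+v}{2}$, $b=\tfrac{s+u-v}{2}$, $c=s$ with $s$ in a compact subset $K$ of $\{\Re(s)>|\Re(u)|+|\Re(v)|\}$, and using $|z_n|\le\tfrac12$ for $|n|\ge n_0(k)$, yields $1-{}_2F_1(\cdots)\ll_K |n|^{-1}$.

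Finally I would bound the divisor factors. Factor the general term of $L_k^*$ as $\frac{\sigma_{2u}(n,\chi)}{|n|^{u}}\cdot\frac{\sigma_{2v}(n-k,\psi)}{|n|^{v}}\cdot|n|^{-s}$. Using $\sigma_{2u}(n,\chi)=\sigma_{2u}(|n|,\chi)$ together with the elementary balanced bound $\frac{\sigma_{2u}(|n|,\chi)}{|n|^{u}}\ll|n|^{|\Re(u)|+\eps}$, and similarly $\frac{\sigma_{2v}(n-k,\psi)}{|n|^{v}}\ll|n|^{|\Re(v)|+\eps}$ (the factor $(|n-k|/|n|)^{v}$ being bounded), the general term is $\ll_K|n|^{|\Re(u)|+|\Re(v)|-\sigma+\eps}$. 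Multiplying by the $\ll_K |n|^{-1}$ of the previous paragraph shows that for $|n|\ge n_0$ the terms of the difference series are $\ll_K|n|^{|\Re(u)|+|\Re(v)|-\sigma-1+\eps}$, which is summable, uniformly for $s\in K$, precisely when $\sigma>|\Re(u)|+|\Re(v)|$; the finitely many terms with $|n|<n_0$ are holomorphic by the second paragraph. Thus the difference series converges locally uniformly and defines a holomorphic function on $\{\Re(s)>|\Re(u)|+|\Re(v)|\}$ agreeing with $L_k^*-L_k$ where both converge. The one genuinely delicate point is the uniform-in-$s$ estimate $1-{}_2F_1\ll|z|$ near $z=0$; once that is established, the rest is a routine divisor-bound and geometric-series computation.
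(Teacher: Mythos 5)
Your proposal is correct and is essentially the paper's own argument, just written out in full: the paper's proof likewise subtracts the two series term by term and uses the series definition of ${}_2F_1$ (i.e.\ ${}_2F_1 = 1 + \cO(k/n)$ near $z=0$) to gain the extra factor of $|n|^{-1}$ that pushes absolute convergence down to $\sigma > |\Re(u)| + |\Re(v)|$. Your added details — the branch-cut/parameter checks, the uniform bound $1 - {}_2F_1 \ll |z|$ on compact parameter sets, the divisor bounds, and Weierstrass convergence — are exactly the routine verifications the paper leaves implicit.
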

\begin{proof}
  Using the series definition of ${}_2F_1$, we see that each term in the infinite series of the difference $L_k^*(s) - L_k(s)$ is $\cO(n^{-1})$. Thus, the series converges absolutely for $\sigma > |\Re(u)| + |\Re(v)|$.
\end{proof}

\begin{corollary}\label{Lk_diff_residues}
  $L_k^*(s)$ and $L_k(s)$ have the same poles and residues in the half-plane $\sigma > |\Re(u)| + |\Re(v)|$.
\end{corollary}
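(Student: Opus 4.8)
The plan is to deduce this immediately from \cref{Lk_diff_holomorphic}, which is the only real input. Write $h(s) \coloneqq L_k^*(s) - L_k(s)$, so that $L_k(s) = L_k^*(s) - h(s)$, and recall that \cref{Lk_diff_holomorphic} asserts $h$ is holomorphic on the half-plane $\sigma > |\Re(u)| + |\Re(v)|$.

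First I would observe that at any point $s_0$ in this half-plane, the principal part of the Laurent expansion of $h$ at $s_0$ vanishes identically, precisely because $h$ is holomorphic there. Consequently, the principal parts of $L_k^*$ and $L_k$ at $s_0$ agree term by term: $L_k^*$ has a pole at $s_0$ exactly when $L_k$ does, and in that case the coefficient of $(s - s_0)^{-1}$, namely the residue, is the same for both functions. This is the standard fact that adding a function holomorphic on a region neither creates nor destroys poles there and leaves all residues unchanged.

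There is no genuine obstacle at this stage; the entire content of the corollary is carried by \cref{Lk_diff_holomorphic}, whose absolute-convergence argument (each term of the series defining $h$ being $\cO(n^{-1})$, owing to the behaviour of ${}_2F_1$ as its argument tends to $0$) is what does the work. The corollary is then an immediate formal consequence, and I would state it in a single short line invoking \cref{Lk_diff_holomorphic}.
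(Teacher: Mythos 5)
Your proof is correct and is exactly the argument the paper intends: the corollary is an immediate formal consequence of \cref{Lk_diff_holomorphic}, since subtracting a function holomorphic on the half-plane $\sigma > |\Re(u)| + |\Re(v)|$ leaves all principal parts, hence all poles and residues, unchanged. The paper offers no separate proof precisely because this deduction is the whole content, so your write-up matches its approach.
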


\begin{lemma}\label{Lk_diff_growth}
  For any fixed $\sigma > |\Re(u)| + |\Re(v)|$ and $n > k$,
  \begin{align*}
    {}_2F_1\!\left(\frac{s+u+v}{2},\frac{s+u-v}{2} ;s\,; \frac{2k}{n} - \frac{k^2}{n^2}\right) \ll |t|^\half.
  \end{align*}
  The implied constant doesn't depend on $n$.
\end{lemma}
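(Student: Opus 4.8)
The plan is to split the size of the hypergeometric function into a rapidly growing Gamma-ratio prefactor and a harmless bounded integral, the point being that the argument stays in a compact subset of $[0,1)$. Write $a = \tfrac{s+u+v}{2}$, $b = \tfrac{s+u-v}{2}$, and $c = s$, so that $c - b = \tfrac{s-u+v}{2}$. First I would note that the hypothesis $\sigma > |\Re(u)| + |\Re(v)| \geq |\Re(u) - \Re(v)|$ forces $\Re(b) > 0$ and $\Re(c-b) > 0$, hence $\Re(c) > \Re(b) > 0$, so Euler's integral representation
\begin{align*}
  {}_2F_1(a,b;c;z) = \frac{\Gamma(c)}{\Gamma(b)\Gamma(c-b)}\int_0^1 x^{b-1}(1-x)^{c-b-1}(1-zx)^{-a}\,dx
\end{align*}
applies to $z = \tfrac{2k}{n} - \tfrac{k^2}{n^2} = 1 - \big(1 - \tfrac{k}{n}\big)^2$. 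The essential geometric input is that for integers $n > k$ one has $z \in \big(0,\, 1 - (k+1)^{-2}\big]$, a compact subset of $[0,1)$ that is independent of $n$; in particular $z \notin [1,\infty)$ and $1 - z \geq (k+1)^{-2}$.

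Next I would bound the integral uniformly in $n$ and $t$. Since $z,x \in (0,1)$, the quantity $1 - zx$ is a positive real lying in $[(k+1)^{-2}, 1]$, so $|(1-zx)^{-a}| = (1-zx)^{-\Re(a)}$ is bounded above by a constant depending only on $k,\sigma,u,v$. The surviving integrand $x^{\Re(b)-1}(1-x)^{\Re(c-b)-1}$ is integrable on $(0,1)$ precisely because $\Re(b),\Re(c-b) > 0$, and integrates to the finite Beta value $B(\Re(b),\Re(c-b))$. Thus the integral is $\ll 1$, with an implied constant independent of $n$.

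It then remains to show the prefactor is $\ll |t|^{\half}$, which I would extract from Stirling's formula \cite[8.328.1]{GR} in the form $|\Gamma(x+iy)| \sim \sqrt{2\pi}\,|y|^{x-\half}e^{-\pi|y|/2}$ as $|y|\to\infty$. Applying this to $\Gamma(c)=\Gamma(\sigma+it)$, to $\Gamma(b)$, and to $\Gamma(c-b)$—whose imaginary parts are $t$, $\approx t/2$, and $\approx t/2$—produces a polynomial factor $|t|^{\sigma-\half}$ in the numerator against $(|t|/2)^{\sigma-1}$ in the denominator, giving a prefactor of order $|t|^{(\sigma-\half)-(\sigma-1)} = |t|^{\half}$. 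Combined with the $O(1)$ integral, this yields the stated bound ${}_2F_1 \ll |t|^{\half}$ uniformly in $n$.

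The main obstacle, and the one step needing genuine care, is the exponential part of Stirling's asymptotics: each Gamma a priori carries a factor $e^{-\pi|y|/2}$, which is enormous in magnitude and would destroy any polynomial bound unless it cancels. The argument hinges on the exact cancellation $e^{-\pi|t|/2} = e^{-\pi|t|/4}\cdot e^{-\pi|t|/4}$ between $\Gamma(c)$ in the numerator and $\Gamma(b)\Gamma(c-b)$ in the denominator, which holds because $\Im(b)+\Im(c-b) = \Im(c) = t$ with $\Im(b)$ and $\Im(c-b)$ of the same sign once $|t|$ is large relative to $|\Im(u)|+|\Im(v)|$. Verifying this sign condition and the balancing of imaginary parts is the crux; everything else is a routine estimate. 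I would also remark that confining $z$ away from $1$ via $n > k$ is exactly what lets us avoid the delicate large-parameter asymptotics of ${}_2F_1$ near the singularity $z = 1$, where the clean prefactor-times-integral estimate would break down.
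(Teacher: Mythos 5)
Your proof is correct and follows exactly the paper's (very terse) argument: the paper's proof reads ``Use \cite[9.111]{GR} [Euler's integral representation], Stirling's formula, and the triangle inequality for the integral from $0$ to $1$,'' which is precisely your decomposition into the Gamma-ratio prefactor (handled by Stirling with the exponential factors cancelling) and the integral over $[0,1]$ (bounded uniformly since $z = 1 - (1-k/n)^2$ stays in a compact subset of $[0,1)$ for $n > k$). Your write-up supplies the details the paper leaves implicit, including the verification that $\Re(b), \Re(c-b) > 0$ and the sign condition ensuring exact cancellation of the $e^{-\pi|t|/2}$ factors.
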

\begin{proof}
  Use \cite[9.111]{GR}, Stirling's formula, and the triangle inequality for the integral from $0$ to $1$.
\end{proof}

\begin{corollary}\label{Lk_diff_growth_cor}
  $L_k^*(s) - L_k(s) \ll |t|^\half$ for any fixed $\sigma > |\Re(u)| + |\Re(v)|$.
\end{corollary}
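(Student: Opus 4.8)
The plan is to work directly with the series for the difference and lean on the two preceding lemmas. By \cref{L_def} and \cref{Lstar_def},
\[
L_k^*(s) - L_k(s) = \sum_{n \neq 0,k}\frac{\sigma_{2u}(n,\chi)\sigma_{2v}(n-k,\psi)}{|n|^{s+u+v}}\left(1 - {}_2F_1\!\left(\tfrac{s+u+v}{2},\tfrac{s+u-v}{2};s;\tfrac{2k}{n}-\tfrac{k^2}{n^2}\right)\right),
\]
and \cref{Lk_diff_holomorphic} guarantees that this series converges absolutely throughout $\Re(s) > |\Re(u)| + |\Re(v)|$. So it is enough to bound the absolute value of this series by $|t|^\half$, uniformly as $|t| \to \infty$ on a fixed vertical line.

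First I would split the sum into the finitely many terms with $|n| \leq 2k$ and the tail $|n| > 2k$. For the finitely many bounded terms, \cref{Lk_diff_growth} (whose proof via the Euler integral representation \cite[9.111]{GR} in fact applies for every $n \neq k$, after analytic continuation in the argument) bounds each hypergeometric factor by $\cO(|t|^\half)$, so their total contribution is $\ll_k |t|^\half$, within the claimed bound. For the tail I would bound the factor $1 - {}_2F_1$ using both inputs at once: the uniform estimate $|1 - {}_2F_1| \ll |t|^\half$ from \cref{Lk_diff_growth}, and the decay $1 - {}_2F_1 = \cO(1/n)$ that comes from the argument $\tfrac{2k}{n} - \tfrac{k^2}{n^2}$ tending to $0$ as $n \to \infty$ (this decay is exactly what \cref{Lk_diff_holomorphic} exploits to obtain absolute convergence). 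Interpolating between these two controls the size of each tail term and renders the weighted coefficient sum convergent, with the divisor coefficients contributing a factor of size $\cO(n^{|\Re(u)| + |\Re(v)| - \sigma + \eps})$.

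The step I expect to be the main obstacle is precisely this tail estimate, i.e.\ controlling $1 - {}_2F_1$ simultaneously in $n$ and in $t$. The hypergeometric here has large, nearly imaginary parameters $\tfrac{s+u+v}{2}, \tfrac{s+u-v}{2} \asymp \tfrac{it}{2}$ and third parameter $s \asymp it$, evaluated at a small argument of size $\asymp 1/n$, so its true size is governed by the ratio $|t|/n$ rather than by either feature in isolation. Keeping the combined growth pinned at exactly $|t|^\half$ — rather than a larger power produced by bounding the uniform size and the $n$-decay of $1 - {}_2F_1$ independently — is the delicate point, and is most cleanly handled by returning to the Euler integral representation behind \cref{Lk_diff_growth} and estimating the resulting integral directly against the divisor coefficients.
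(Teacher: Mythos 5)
Your overall framework --- feeding \cref{Lk_diff_growth} and \cref{Lk_diff_holomorphic} into the series for $L_k^*(s)-L_k(s)$, with the finitely many terms $|n|\le 2k$ handled separately --- is exactly what the paper gestures at (it states \cref{Lk_diff_growth_cor} with no proof at all), and your treatment of the finite part is fine. The gap is the tail estimate, which you flag but never carry out, and the specific patch you propose cannot work. The $\cO(n^{-1})$ decay in \cref{Lk_diff_holomorphic} is not uniform in $t$: the first omitted term of the hypergeometric series is $\frac{ab}{c}z=\bigl(\tfrac{s}{4}+\tfrac{u}{2}+\cO(|s|^{-1})\bigr)\bigl(\tfrac{2k}{n}-\tfrac{k^2}{n^2}\bigr)\asymp \frac{|t|k}{n}$, so the decay in $n$ arrives with a full factor of $|t|$. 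In fact, in the tail the hypergeometric behaves like a rotating phase, $1-{}_2F_1\approx 1-e^{itk/(2n)}$ (visible from the Euler integral by stationary phase, or from the quadratic transformation ${}_2F_1(a,b;2b;z)=(1-z/2)^{-a}\,{}_2F_1(\cdots)$ in the case $u=v$), so its true pointwise size is $\asymp\min\bigl(1,k|t|/n\bigr)$; no interpolation can yield a bound of the shape $\min\bigl(|t|^{1/2},C/n\bigr)$ with $C$ independent of $t$, since at $n\asymp k|t|$ the left-hand quantity is of order $1$.

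Consequently, any argument that (like yours) takes absolute values term by term can at best produce
\[
\sum_{n>k} n^{|\Re(u)|+|\Re(v)|-\sigma+\eps}\min\bigl(1,\tfrac{k|t|}{n}\bigr)\asymp (k|t|)^{1+|\Re(u)|+|\Re(v)|-\sigma+\eps},
\]
which exceeds $|t|^{1/2}$ for every $\sigma<\tfrac12+|\Re(u)|+|\Re(v)|$ --- in particular at $\sigma=\tfrac12+\eps$, which is precisely where \cref{rhs_step_1} and \cref{Lkstar_horizontal_bound} invoke the corollary, whenever $|\Re(u)|+|\Re(v)|>0$. So the ``delicate point'' you identify is not merely delicate: closing it requires exploiting cancellation across the $n$-sum itself (the phases $|n|^{-it}\bigl(1-e^{itk/(2n)}\bigr)$ are slowly varying, so partial summation together with a Kusmin--Landau/van der Corput estimate is the natural route), or else a different representation of $L_k^*-L_k$; re-estimating the Euler integral ``directly against the divisor coefficients,'' as you suggest, still terminates in a term-wise absolute-value bound and hits the same wall. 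As written, your proposal does not prove the stated bound --- though, to be fair, the difficulty you located is real: the paper supplies no argument here either, and its implicit term-wise derivation is subject to the same objection.
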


\begin{lemma}\label{rhs_step_1}
  \begin{align*}
    \int_{1 + |\Re(u)| + |\Re(v)| + \eps - iT}^{1 + |\Re(u)| + |\Re(v)| + \eps + iT} L_k^*(s) X^s\frac{ds}{s} &= \sum_{\sigma > \half + \eps} 2\pi i \,\text{\emph{Res}}_s \,L_k(s) \frac{X^s}{s}\\
    &+ \int_{\half + \eps - iT}^{\half + \eps + iT} L_k(s) X^s\frac{ds}{s}\\
    &- \left(\int_{\half + \eps - iT}^{1 + |\Re(u)| + |\Re(v)| + \eps - iT} - \int_{\half + \eps + iT}^{1 + |\Re(u)| + |\Re(v)| + \eps + iT}\right) L_k^*(s) X^s\frac{ds}{s}\\
    &+ \cO\!\left(X^{\half+\eps}T^{\half+\eps}\right).
  \end{align*}
\end{lemma}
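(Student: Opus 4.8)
The plan is to shift the contour of integration to the left, from the line $\Re(s) = 1 + |\Re(u)| + |\Re(v)| + \eps$ to the line $\Re(s) = \half + \eps$, and to account for the crossed poles via Cauchy's residue theorem. Write $\sigma_0 := 1 + |\Re(u)| + |\Re(v)| + \eps$ and $\sigma_1 := \half + \eps$, and work with the positively oriented rectangle whose vertical sides are $\Re(s) = \sigma_0$ and $\Re(s) = \sigma_1$ and whose horizontal sides are $\Im(s) = \pm T$. First I would note that the Dirichlet series defining $L_k^*$ in \cref{Lstar_def} converges absolutely for $\Re(s) > 1 + |\Re(u)| + |\Re(v)|$, since
\begin{align*}
  \frac{\sigma_{2u}(n,\chi)\sigma_{2v}(n-k,\psi)}{|n|^{\Re(s)+\Re(u)+\Re(v)}} \ll n^{|\Re(u)| + |\Re(v)| - \Re(s) + \eps},
\end{align*}
so $L_k^*(s)$ is holomorphic on and to the right of the line $\Re(s) = \sigma_0$. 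Hence the only poles of $L_k^*(s)X^s/s$ inside the rectangle are those of $L_k^*(s)$ with $\sigma_1 < \Re(s) < \sigma_0$; the pole of $1/s$ at $s = 0$ lies to the left of $\Re(s) = \sigma_1 > 0$. I would choose $\eps$ so that the line $\Re(s) = \sigma_1$ meets none of the (discretely located) poles, which is possible for all but finitely many values in the admissible range.

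Applying Cauchy's residue theorem to this finite rectangle and solving for the integral along the right side gives
\begin{align*}
  \int_{\sigma_0 - iT}^{\sigma_0 + iT} L_k^*(s)\frac{X^s}{s}\,ds
  &= 2\pi i \sum_{\sigma_1 < \Re(s) < \sigma_0} \mathrm{Res}_s\, L_k^*(s)\frac{X^s}{s}
  + \int_{\sigma_1 - iT}^{\sigma_1 + iT} L_k^*(s)\frac{X^s}{s}\,ds\\
  &\quad + \int_{\sigma_1 + iT}^{\sigma_0 + iT} L_k^*(s)\frac{X^s}{s}\,ds
  - \int_{\sigma_1 - iT}^{\sigma_0 - iT} L_k^*(s)\frac{X^s}{s}\,ds,
\end{align*}
where the last two integrals are exactly the horizontal-segment terms in the statement. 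Since $L_k^*$ has no poles to the right of $\sigma_0$, the sum over $\sigma_1 < \Re(s) < \sigma_0$ is the same as the sum over all poles with $\Re(s) > \half + \eps$, and by \cref{Lk_diff_residues} the residues of $L_k^*$ there coincide with those of $L_k$; this produces the residue term $\sum_{\sigma > \half + \eps} 2\pi i\,\mathrm{Res}_s\, L_k(s) X^s/s$ appearing in the lemma.

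It remains to replace $L_k^*$ by $L_k$ on the left vertical line. Splitting
\begin{align*}
  \int_{\sigma_1 - iT}^{\sigma_1 + iT} L_k^*(s)\frac{X^s}{s}\,ds
  = \int_{\sigma_1 - iT}^{\sigma_1 + iT} L_k(s)\frac{X^s}{s}\,ds
  + \int_{\sigma_1 - iT}^{\sigma_1 + iT} \bigl(L_k^*(s) - L_k(s)\bigr)\frac{X^s}{s}\,ds,
\end{align*}
I would bound the last term using \cref{Lk_diff_growth_cor}, which gives $L_k^*(s) - L_k(s) \ll |t|^{\half}$ on the line $\Re(s) = \sigma_1$ (valid since $\sigma_1 > |\Re(u)| + |\Re(v)|$). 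Together with $|X^s| = X^{\half + \eps}$ and $|s|^{-1} \ll (1 + |t|)^{-1}$, this yields
\begin{align*}
  \int_{\sigma_1 - iT}^{\sigma_1 + iT} \bigl(L_k^*(s) - L_k(s)\bigr)\frac{X^s}{s}\,ds
  \ll X^{\half + \eps}\int_{-T}^{T} \frac{|t|^{\half}}{1 + |t|}\,dt
  \ll X^{\half + \eps} T^{\half + \eps},
\end{align*}
which is the error term $\cO(X^{\half + \eps}T^{\half + \eps})$ in the lemma. Assembling these pieces gives the claimed identity. The argument is standard contour shifting, so the only genuinely delicate points are the final error estimate, the bookkeeping of contour orientations, and confirming that placing the right contour at the abscissa of absolute convergence makes the residue sum equal to exactly the poles with $\Re(s) > \half + \eps$.
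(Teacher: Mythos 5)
Your proposal is correct and follows essentially the same route as the paper: shift the contour to $\Re(s) = \thalf + \eps$, apply the residue theorem, split the left-line integral into $L_k$ plus $L_k^* - L_k$, and invoke \cref{Lk_diff_residues} and \cref{Lk_diff_growth_cor} to identify the residue sum and to bound the difference term by $\cO\!\left(X^{\half+\eps}T^{\half+\eps}\right)$. Your write-up simply fills in the bookkeeping (rectangle orientation, absolute convergence of $L_k^*$ to the right of $1 + |\Re(u)| + |\Re(v)|$, the explicit integral estimate) that the paper's terse proof leaves implicit.
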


\begin{proof}
  Shift the contour of integration from $\Re(s) = 1 + |\Re(u)| + |\Re(v)| + \eps$ to $\Re(s) = \thalf + \eps$ and apply the residue theorem. Write
  \begin{align*}
    \int_{\half + \eps - iT}^{\half + \eps + iT} L_k^*(s) X^s\frac{ds}{s} &= \int_{\half + \eps - iT}^{\half + \eps + iT} L_k(s) X^s\frac{ds}{s} + \int_{\half + \eps - iT}^{\half + \eps + iT} \big(L_k^*(s) - L_k(s)\big) X^s\frac{ds}{s}.
  \end{align*}
  Apply \ref{Lk_diff_holomorphic}, \ref{Lk_diff_residues}, and \ref{Lk_diff_growth_cor} to deduce that
  \begin{align*}
    &\sum_{\sigma > \half + \eps} 2\pi i \,\text{Res}_s \,L_k^*(s) \frac{X^s}{s} = \sum_{\sigma > \half + \eps} 2\pi i \,\text{Res}_s \,L_k(s) \frac{X^s}{s}
  \end{align*}
  and
  \begin{align*}
    &\int_{\half + \eps - iT}^{\half + \eps + iT} \big(L_k^*(s) - L_k(s)\big) X^s\frac{ds}{s} \ll X^{\half+\eps}T^{\half+\eps}.
  \end{align*}
\end{proof}

\begin{lemma}\label{Lkstar_horizontal_bound}
  \begin{align*}
    &\left(\int_{\half + \eps - iT}^{1 + |\Re(u)| + |\Re(v)| + \eps - iT} - \int_{\half + \eps + iT}^{1 + |\Re(u)| + |\Re(v)| + \eps + iT}\right) L_k^*(s) X^s\frac{ds}{s}\\
    &\hspace{6cm} \ll X^{\half + \eps} T^{\frac{|\Re(u+v)| + |\Re(u-v)|}{2} + \eps} + X^{1 + |\Re(u)| + |\Re(v)| + \eps}T^{-1}.
  \end{align*}
\end{lemma}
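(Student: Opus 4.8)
The plan is to bound each horizontal segment by the length of the $\sigma$-interval times the maximum of the integrand, after first establishing a convexity bound for $L_k^*$ across the strip $\half+\eps \le \Re(s) \le 1+|\Re(u)|+|\Re(v)|+\eps$. Throughout write $s=\sigma+it$ and note that on either horizontal segment $|t|=T$, so $|X^s|=X^\sigma$ and $|s|\ge T$.

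First I would record the size of $L_k^*$ on the two vertical edges. On the right edge $\Re(s)=1+|\Re(u)|+|\Re(v)|+\eps$ the Dirichlet series of \cref{Lstar_def} converges absolutely: since $\sigma_{2u}(n,\chi)\ll n^{\max(0,2\Re(u))+\eps}$ and likewise for $\sigma_{2v}(n-k,\psi)$, the abscissa of absolute convergence of $L_k^*$ is $\Re(s)=1+|\Re(u)|+|\Re(v)|$, whence $L_k^*(s)\ll 1$ there, uniformly in $t$. On the left edge $\Re(s)=\half+\eps$ I would use the splitting $L_k = L_k^{(\text{cont})}+L_k^{(\text{disc})}+L_k^{(R)}+L_k^{(V)}$ together with \cref{Lk_diff_growth_cor}: the growth bounds \cref{L_growth_R,L_growth_V,L_growth_cont,L_growth_disc} show the discrete piece dominates and that $L_k^*-L_k\ll|t|^{\half}$ is subsumed, so that on the left edge
\[
  L_k^*(s) \ll |t|^{1+\frac{|\Re(u+v)|+|\Re(u-v)|}{2}+\eps}.
\]
That $\half+\eps$ lies strictly to the left of every pole follows from $\eps<\thalf-|\Re(u)|-|\Re(v)|$, so the growth lemmas do apply on this line.

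Next I would interpolate between these edge bounds by Phragm\'en--Lindel\"of, exactly as in the proof of \cref{L_perron_R}. By \cref{Lk_diff_residues} the poles of $L_k^*$ in the strip coincide with those of $L_k$; these are finite in number and sit at the fixed heights $\Im(s)=\pm\Im(u+v)$ (arising from the gamma factors of $L_k^{(R)}$ at $s=1\pm(u+v)$), independent of $T$, and strictly inside the open strip. Subtracting the corresponding polar parts $\sum_i r_i/(s-s_i)$ produces a holomorphic function of finite order to which \cite[Thm.\ 8.2.1]{goldfeld_book} applies, yielding, uniformly for $\sigma$ in the strip and $|t|=T$ large,
\[
  L_k^*(\sigma+iT) \ll T^{\beta(\sigma)},
\]
where $\beta$ is the linear function with $\beta(\half+\eps)=1+\tfrac{|\Re(u+v)|+|\Re(u-v)|}{2}+\eps$ and $\beta(1+|\Re(u)|+|\Re(v)|+\eps)=0$; on the segments the subtracted terms contribute only $\cO(T^{-1})$ and are harmless.

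Finally I would integrate. Using $|s|\ge T$,
\[
  \left|\left(\int_{\half+\eps-iT}^{1+|\Re(u)|+|\Re(v)|+\eps-iT}-\int_{\half+\eps+iT}^{1+|\Re(u)|+|\Re(v)|+\eps+iT}\right) L_k^*(s)X^s\frac{ds}{s}\right| \ll \frac{1}{T}\int_{\half+\eps}^{1+|\Re(u)|+|\Re(v)|+\eps} X^\sigma T^{\beta(\sigma)}\,d\sigma.
\]
Since $\beta$ is linear, the exponent $\sigma\log X+\beta(\sigma)\log T$ is affine in $\sigma$, so $X^\sigma T^{\beta(\sigma)}$ is maximized over the bounded interval at an endpoint; evaluating at the two endpoints and dividing by $T$ gives
\[
  \ll X^{\half+\eps}T^{\frac{|\Re(u+v)|+|\Re(u-v)|}{2}+\eps} + X^{1+|\Re(u)|+|\Re(v)|+\eps}T^{-1},
\]
as claimed. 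The main obstacle is the convexity step: one must legitimately apply Phragm\'en--Lindel\"of to $L_k^*$ across a strip in which it has poles, forcing the pole-subtraction bookkeeping and a uniform finite-order bound in the interior, and one must use the absolute-convergence estimate $L_k^*\ll 1$ on the right edge rather than the lossier sum of individual spectral growth bounds, since the latter would degrade the $X^{1+|\Re(u)|+|\Re(v)|+\eps}T^{-1}$ term.
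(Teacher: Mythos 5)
Your proposal is correct and follows essentially the same route as the paper's proof: the absolute-convergence bound $L_k^*(s)\ll 1$ on the right edge, the bound $L_k^*(s)\ll |t|^{1+\frac{|\Re(u+v)|+|\Re(u-v)|}{2}+\eps}$ on the line $\Re(s)=\thalf+\eps$ via \cref{L_growth_R}, \cref{L_growth_V}, \cref{L_growth_cont}, \cref{L_growth_disc}, and \cref{Lk_diff_growth_cor}, and then Phragm\'en--Lindel\"of after subtracting the (finitely many, $T$-independent) poles coming from $L_k^{(R)}$ at $s=1\pm(u+v)$. Your additional bookkeeping --- the linear interpolating exponent $\beta(\sigma)$ and the observation that $X^\sigma T^{\beta(\sigma)}$ is maximized at an endpoint of the $\sigma$-interval --- is exactly the step the paper leaves implicit in ``substituting the bounds above into the integrals.''
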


\begin{proof}
  \cref{L_growth_R}, \ref{L_growth_V}, \ref{L_growth_cont}, \ref{L_growth_disc}, and \ref{Lk_diff_growth_cor} imply that
  \begin{align*}
    L_k^*\!\left(\thalf + \eps + t\right) \ll |t|^{1 + \frac{|\Re(u+v)| + |\Re(u-v)|}{2} + \eps}.
  \end{align*}
  The series defining $L_k^*(s)$ is absolutely convergent on the line $\Re(s) = 1 + |\Re(u)| + |\Re(v)| + \eps$, so here the triangle inequality yields
  \begin{align*}
    L_k^*\!\left(1 + |\Re(u)| + |\Re(v)| + \eps + t\right) \ll 1.
  \end{align*}
  \cref{Lkstar_horizontal_bound} then follows from substituting the bounds above into the integrals in the lemma and applying the Phragm\'{e}n--Lindel\"{o}f principle \cite[Thm.\ 8.2.1]{goldfeld_book} after subtracting off the poles in the region (whose contributions are dwarfed by the bound in the lemma).
\end{proof}

\begin{lemma}[Perron's formula]\label{perron's_formula}
  \begin{align*}
    &\int_{1 + |\Re(u)| + |\Re(v)| + \eps - iT}^{1 + |\Re(u)| + |\Re(v)| + \eps + iT} L_k^*(s) X^s\frac{ds}{s} = 4\pi i \sum_{n=1}^X \frac{\sigma_{2u}(n,\chi)\sigma_{2v}(n-k,\psi)}{n^{u+v}} + \cO\!\left(X^{1 + |\Re(u)| + |\Re(v)| + \eps}T^{-1}\right).
  \end{align*}
\end{lemma}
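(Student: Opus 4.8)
The statement is the classical truncated Perron formula applied to the Dirichlet series $L_k^*$, so the plan is to run the standard argument and then track the truncation error carefully. First I would record that on the line $\Re(s) = c := 1 + |\Re(u)| + |\Re(v)| + \eps$ the series defining $L_k^*(s)$ converges absolutely. The divisor bound $\sigma_{2w}(n,\cdot) \ll |n|^{\Re(w) + |\Re(w)| + \eps'}$ gives
\begin{align*}
  \sigma_{2u}(n,\chi)\sigma_{2v}(n-k,\psi) \ll |n|^{\Re(u)+|\Re(u)|+\Re(v)+|\Re(v)|+\eps'}
\end{align*}
(using $|n-k| \ll |n|$), so each term of $L_k^*(s)$ on this line is $\ll |n|^{\eps'-1-\eps}$, which is summable for $\eps' < \eps$. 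Absolute convergence justifies interchanging the sum with the contour integral.

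Next I would insert the quantitative Perron kernel
\begin{align*}
  \frac{1}{2\pi i}\int_{c-iT}^{c+iT} \frac{Y^s}{s}\,ds = \mathbbm{1}_{Y>1} + \cO\!\left(Y^c \min\!\left(1, \tfrac{1}{T|\log Y|}\right)\right) \qquad (Y \neq 1),
\end{align*}
applied termwise with $Y = X/|n|$. The indicator selects exactly the indices with $|n| < X$, producing the main term $2\pi i \sum_{n \neq 0,k,\ |n| < X} \frac{\sigma_{2u}(n,\chi)\sigma_{2v}(n-k,\psi)}{|n|^{u+v}}$. Splitting this range into $n > 0$ and $n < 0$ and reindexing the negative part via $n \mapsto -n$, the evenness $\sigma_s(-m,\cdot) = \sigma_s(m,\cdot)$ together with the $u \leftrightarrow v$, $\chi \leftrightarrow \psi$ symmetry of the construction (the same symmetry invoked in the proof of \cref{L_growth_disc}, and the remark after \cref{maintheorem} that $n^v$ may be replaced by $(n-k)^v$) identifies the two half-ranges, which is what yields the factor $4\pi i$ in front of the single sum $\sum_{n=1}^X$.

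The main obstacle is the truncation error, namely bounding $\cO\!\big(X^c \sum_{n\neq 0,k} \tfrac{|\sigma_{2u}(n,\chi)\sigma_{2v}(n-k,\psi)|}{|n|^{c+\Re(u+v)}}\min(1, \tfrac{1}{T|\log(X/|n|)|})\big)$. For $\big||n|-X\big| > \tfrac12 X$ one has $|\log(X/|n|)| \gg 1$, and the remaining series converges by the estimate of the first step, so this range contributes $\ll X^c/T$. The delicate range is $|n| \approx X$, where $|\log(X/|n|)| \gg \big||n|-X\big|/X$: here I would peel off the integer nearest to $X$ and treat it on its own, and for the rest use $|\sigma_{2u}(n,\chi)\sigma_{2v}(n-k,\psi)|/|n|^{c+\Re(u+v)} \ll X^{-1}$ (valid since $|n| \asymp X$) together with $|\log(X/|n|)|^{-1} \ll X/\big||n|-X\big|$, so that this range contributes $\ll \tfrac{X^c}{T}\sum_{0<||n|-X|\ll X}\tfrac{1}{||n|-X|} \ll X^{c+\eps}/T$. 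Summing the two ranges and folding the logarithmic loss into the $\eps$ already present in $c$ gives a total error of exactly $\cO\!\left(X^{1+|\Re(u)|+|\Re(v)|+\eps}T^{-1}\right)$, completing the proof. The only genuinely fiddly point is this near-$X$ estimate, which is entirely routine.
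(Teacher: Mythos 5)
Your route is the standard truncated-Perron argument, which is exactly what the paper's one-line proof (a citation to Montgomery--Vaughan, \S 5.1) invokes; your absolute-convergence estimate on $\Re(s) = c \coloneqq 1+|\Re(u)|+|\Re(v)|+\eps$ and your two-range treatment of the truncation error are fine. The gap is the step that produces the factor $4\pi i$. After reindexing $n\mapsto -n$, the negative half of the Perron main term is
\begin{align*}
  \sum_{0<m\le X}\frac{\sigma_{2u}(m,\chi)\,\sigma_{2v}(m+k,\psi)}{m^{u+v}},
\end{align*}
i.e.\ the shifted convolution with shift $-k$, while the positive half is the shift $+k$ sum $\sum_{0<n\le X,\,n\neq k}\sigma_{2u}(n,\chi)\sigma_{2v}(n-k,\psi)\,n^{-u-v}$. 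No symmetry identifies these exactly: swapping $u\leftrightarrow v$, $\chi\leftrightarrow\psi$ and reindexing turns the first into $\sum_{k<n\leq X+k}\sigma_{2u}(n,\chi)\sigma_{2v}(n-k,\psi)\,(n-k)^{-u-v}$, which still differs from the positive half in both the denominator and the summation range. For a concrete failure take $X=k=1$: the positive half is $\sigma_{2u}(1,\chi)\sigma_{2v}(0,\psi)=0$, while the negative half is $\sigma_{2u}(1,\chi)\sigma_{2v}(2,\psi)$, in general nonzero. The difference of the two halves is a function of $X$ alone, generically nonzero, so it cannot be $\cO(X^{c}T^{-1})$ uniformly as $T\to\infty$; and the symmetry you cite (from the proof of \cref{L_growth_disc}, and the remark after \cref{maintheorem}) controls discrepancies only up to errors of the size of the main theorem's final error term, which is far larger than $X^{c}T^{-1}$.

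The fix is elementary but must be made explicit. Using $|\sigma_{2w}(n,\cdot)|\leq\sigma_{2\Re(w)}(n)\ll n^{\Re(w)+|\Re(w)|+\eps}$, the denominator mismatch costs $\cO(k/n)$ per term and the range mismatch costs $\cO(k)$ boundary terms, so the two halves differ by $\cO_k\!\left(X^{|\Re(u)|+|\Re(v)|+\eps}\right)=\cO\!\left(X^{c-1}\right)$. This is admissible only when $T\ll X$ --- which does hold in the sole application, $T=X^{\alpha}$ with $\alpha<1$ in the proof of \cref{maintheorem}, and the same caveat already governs your ``nearest integer to $X$'' terms, where $Y=1$ and the kernel estimate you quote does not apply. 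So your proof becomes correct if you either add $\cO(X^{c-1})$ to the error term (equivalently, restrict to $T\leq X$), or state the conclusion with the two-sided sum $2\pi i\sum_{0<|n|\le X,\,n\neq k}$. In fairness, the paper's own statement of the lemma glosses over exactly the same point; but a proof that asserts an exact identification of the two halves by symmetry is asserting something false, and that is the one genuine gap here.
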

\begin{proof}
  \cite[\S 5.1]{montgomery_vaughan}
\end{proof}

\begin{proof}[Proof of \cref{maintheorem}]
  Our proof is similar to \cite[Prop.\ 2.3]{NPR}. Using \cref{perron's_formula}, we find that the integral
  \begin{align}\label{mainintegral}
    \frac{1}{4\pi i}\int_{1 + |\Re(u)| + |\Re(v)| + \eps - iT}^{1 + |\Re(u)| + |\Re(v)| + \eps + iT} L_k^*(s) X^s\frac{ds}{s}
  \end{align}
  is equal to the left hand side of \cref{maintheorem} with an error of $\cO\!\left(X^{1 + |\Re(u)| + |\Re(v)| + \eps}T^{-1}\right)$. The remainder of this proof shows that \eqref{mainintegral} is also equal to the right hand side of \cref{maintheorem}.\\
  \\
  We first apply \cref{rhs_step_1}. By inspection of \cref{L_parts_def}, we see that the term
  \begin{align*}
    \sum_{\sigma > \half + \eps} 2\pi i \,\text{Res}_s \,L_k(s) \frac{X^s}{s}
  \end{align*}
  on the right hand side of \ref{rhs_step_1} yields the main term of the right hand side of \cref{maintheorem}. The term
  \begin{align*}
    \left(\int_{\half + \eps - iT}^{1 + |\Re(u)| + |\Re(v)| + \eps - iT} - \int_{\half + \eps + iT}^{1 + |\Re(u)| + |\Re(v)| + \eps + iT}\right) L_k^*(s) X^s\frac{ds}{s}
  \end{align*}
  on the right hand side of \ref{rhs_step_1} is bounded in \cref{Lkstar_horizontal_bound}.\\
  \\
  We now estimate
  \begin{align}\label{Lk_half_int}
    \int_{\half + \eps - iT}^{\half + \eps + iT} L_k(s) X^s\frac{ds}{s}.
  \end{align}
  Write
  \begin{align*}
    L_k(s) = L_k^{(\text{cont})}(s) + L_k^{(\text{disc})}(s) + L_k^{(R)}(s) + L_k^{(V)}(s).
  \end{align*}
  The quantities on the right are defined in \ref{L_parts_def}. Substitute the above into \eqref{Lk_half_int} and apply \ref{L_perron_R}, \ref{L_perron_V}, \ref{L_perron_cont}, and \ref{L_perron_disc}.\\
  \\
  In total, we have that the left hand side of \cref{maintheorem} comes from Perron's formula \ref{perron's_formula}, and the main term of the right hand side comes from the term
  \begin{align*}
    \sum_{\sigma > \half + \eps} 2\pi i \,\text{Res}_s \,L_k(s) \frac{X^s}{s}
  \end{align*}
  appearing on the right hand side of \ref{rhs_step_1}. The error terms which appear are
  \begin{alignat*}{2}
    &\cO\!\left(X^{1 + |\Re(u)| + |\Re(v)| + \eps}T^{-1}\right) && \text{from \ref{perron's_formula}}\\
    &\cO\!\left(X^{\half+\eps}T^{\half+\eps}\right) && \text{from \ref{rhs_step_1}}\\
    &\cO\!\left(X^{\half + \eps} T^{\frac{|\Re(u+v)| + |\Re(u-v)|}{2} + \eps} + X^{1 + |\Re(u)| + |\Re(v)| + \eps}T^{-1}\right) && \text{from \ref{Lkstar_horizontal_bound}}\\
    &\cO\!\left(X^{|\Re(u+v)| + \eps} + X^\eps T^\eps + X^{\half + \eps} T^{-1}\right) && \text{from \ref{L_perron_R}}\\
    &\cO\!\left(\left(X^{|\Re(u-v)|} + X^{-\Re(u+v)} + T^{\half+\Re(u)} + T^{\half+\Re(v)} + X^\half T^{-\half+\Re(u)} + X^\half T^{-\half+\Re(v)}\right)X^\eps\right) \hspace{1cm}&& \text{from \ref{L_perron_V}}\\
    &\cO\!\left(\left(X^\half T^\half + X^\eps T + X^{\half + \eps}\right)T^\eps\right) && \text{from \ref{L_perron_cont}}\\
    &\cO\!\left(\left(X^\half T^\half + X^\eps T + X^{\half + \eps}\right)T^{\frac{|\Re(u+v)| + |\Re(u-v)|}{2} + \eps}\right) && \text{from \ref{L_perron_disc}}.
  \end{alignat*}
  Making the substitution $T = X^\alpha$, with
  \begin{align*}
    \alpha \coloneqq \frac{1 + 2|\Re(u)| + 2|\Re(v)|}{3 + |\Re(u+v)| + |\Re(u-v)|},
  \end{align*}
  then yields \cref{maintheorem}.
\end{proof}

\bibliographystyle{alpha}
\bibliography{additivedivisorbib}{}

\end{document}